\def\HC{{\operatorname{HC}}}
\newcommand{\CC}{\mathbb{C}}
\newcommand{\NN}{\mathbb{N}}
\newcommand{\ZZ}{\mathbb{Z}}
\newcommand{\fg}{\mathfrak{g}}
\newcommand{\fl}{\mathfrak{l}}
\newcommand{\kk}{\mathbbm{k}}
\newcommand{\gl}{\mathfrak{gl}}
\newcommand{\fz}{\mathfrak{z}}
\newcommand{\Ymn}{Y_{m|n}}
\newcommand{\SYmn}{SY_{m|n}}
\DeclareMathOperator{\ad}{ad}
\DeclareMathOperator{\Char}{char}
\DeclareMathOperator{\gr}{gr}
\DeclareMathOperator{\sgn}{sgn}
\DeclareMathOperator{\halfa}{{\textstyle\frac{(-1)^{|i|}}{2}}}
\DeclareMathOperator{\halfb}{{\textstyle\frac{(-1)^{|i+1|}}{2}}}
\numberwithin{equation}{section}
\newtheorem{Theorem}{Theorem}[section]
\newtheorem{Lemma}[Theorem]{Lemma}
\newtheorem{Corollary}[Theorem]{Corollary}
\newtheorem{Proposition}[Theorem]{Proposition}
\theoremstyle{Theorem}
\newtheorem*{thm*}{Theorem}
\newtheorem*{thm**}{Corollary}
\newtheorem*{thm***}{Theorem B}
\theoremstyle{remark}
\newtheorem{Remark}{Remark}
\numberwithin{equation}{section}
\begin{document}
\title{The center of the modular super Yangian $Y_{m|n}$}
\author[Hao Chang \lowercase{and} Hongmei Hu]{Hao Chang \lowercase{and} Hongmei Hu*}
\address[Hao Chang]{School of Mathematics and Statistics, and Hubei Key Laboratory of Mathematical Sciences, Central China Normal University, 430079 Wuhan, People's Republic of China}
\email{chang@ccnu.edu.cn}
\address[Hongmei Hu]{School of Mathematical Sciences, Suzhou University of Science and Technology, 215009 Suzhou, People's Republic of China}
\email{hmhu0124@126.com}
\date{\today}
\thanks{* Corresponding author.}

\makeatletter
\makeatother
%
\begin{abstract}
The present paper is devoted to studying the super Yangian $\Ymn$
associated to the general linear Lie superalgebra $\mathfrak{gl}_{m|n}$ over a field of positive characteristic.
We extend Drinfeld-type presentations of $\Ymn$ and the special super Yangian $\SYmn$ to positive characteristic.
Moreover, the center $Z(\Ymn)$ of $\Ymn$ is described:
it is generated by its {\it Harish-Chandra center} together with a large {\it $p$-center}.
We also study the $p$-center of $\SYmn$ and provide another description of the $p$-center of $\Ymn$ in terms of the {\it RTT generators}.
\end{abstract}
\maketitle
\setcounter{tocdepth}{2}
\tableofcontents
\section{Introduction}
For each simple finite-dimensional Lie algebra $\fg$ over the field of complex numbers,
the corresponding Yangian $Y(\fg)$ was defined by Drinfeld in \cite{D85}
as a canonical deformation of the universal enveloping algebra $U(\fg[x])$ for the {\it current Lie algebra} $\fg[x].$
In \cite{D88}, Drinfeld gave a new presentation for Yangians.
The Yangians form a remarkable family of quantum groups related to rational solutions of the classical {\it Yang-Baxter equation}.
The Yangian $Y_n=Y(\mathfrak{gl}_n)$ of the Lie
algebra $\mathfrak{gl}_n$ was earlier considered in the works of mathematical physicists from
St.-Petersburg; see for instance \cite{TF79}.
It is an associative algebra whose defining
relations can be written in a specific matrix form,
which is called the {\it RTT relation};
see e.g. \cite{MNO96}.

The super Yangian $Y_{m|n}$ associated to the general linear Lie superalgebra $\mathfrak{gl}_{m|n}$ over the complex field
was defined by Nazarov \cite{Na91} in terms of the {\it RTT presentation} as a super analogue of $Y_n$.
It has been studied by several authors. The {\it Drinfeld-type presentation} was found by Gow in \cite{Gow07}.
In \cite{Peng11,Peng16}, Peng obtained the superalgebra generalization of the {\it parabolic presentations} of \cite{BK05}.
Recently, Tsymbaliuk \cite{Tsy20} provided a generalization of $\Ymn$ to arbitrary parity sequences.

In characteristic zero, the center $Z(Y_n)$ is generated by the coefficients of the {\it quantum determinant},
see \cite[Theorem 2.13]{MNO96}.
In his article \cite{Na91}, Nazarov defined the {\it quantum Berezinian} which
plays a similar role in the study of the super Yangian $\Ymn$ as the quantum determinant does in the case of $Y_n$.
Later, Gow used the Drinfeld presentation to determine the generators of the center of $Y_{m|n}$.
More precisely, the center $Z(\Ymn)$ can be generated freely by the elements $\{c^{(r)};~r>0\}$,
where the elements $c^{(r)}$ are the coefficients of the quantum Berezinian (\cite{Na91}, \cite[Theorem 2]{Gow05}, \cite[Theorem 4]{Gow07}).

In \cite{BT18},
Brundan and Topley developed the theory of the Yangian $Y_n$ over a field of positive characteristic.
In particular,
they gave a description of the center $Z(Y_n)$ of $Y_n$.
One of the key features which differs from characteristic zero is the existence of a large central subalgebra $Z_p(Y_n)$,
called the {\it $p$-center}.
Also, these results give important applications to the theory of modular finite $W$-algebras (see \cite{GT19, GT21}).

The main goal of this article is to obtain the superalgebra generalization of the $p$-center
of \cite{BT18} for the modular super Yangian of type $A$.
We define the super Yangian $\Ymn$ over an algebraically closed field $\kk$ of positive characteristic
to be the associative superalgebra by the usual RTT presentation from \cite{Na91}.
To describe the center $Z(\Ymn)$,
we found that it was easier to work initially with the Drinfeld-type presentation.
Thus the first step is to establish the modular version of Drinfeld-type presentation for $\Ymn$.
Since we are in characteristic $p:=\Char{\kk}>0$,
the modular phenomenon could happen everywhere.
One needs extra care when treating the issues arising from odd elements.
Actually, we have more relations than \cite{Gow07} (see Theorem \ref{theorem Drinfeld presentation}).
Meanwhile, comparing with the Yangian $Y_n$,
we need to insert the necessary sign factors in almost every formula.

In characteristic zero, it is well-known (\cite[Corollary 1]{Gow07}) that $\Ymn$ is a deformation of the universal enveloping algebra
$U(\mathfrak{gl}_{m|n}[x])$ for the current Lie superalgebra $\mathfrak{gl}_{m|n}[x]$.
It is clear that the element $z_r:=(e_{1,1}+\cdots+e_{m+n,m+n})\otimes x^r$ belongs to the center $Z(\mathfrak{gl}_{m|n}[x]):=Z(U(\mathfrak{gl}_{m|n}[x]))$.
Moreover, the coefficient of quantum Berezinian $c^{(r+1)}$ is a lift of $z_r$.
These definitions make sense when $\Char\kk>0$.
The algebra generated by the coefficients $\{c^{(r)}; r >0\}$ will be denoted by $Z_\HC(\Ymn)$ which is a subalgebra of the center $Z(Y_{m|n})$.
We call it the {\it Harish-Chandra center} of $\Ymn$.
Suppose that $p=\Char\kk>0$.
The current Lie superalgebra $\mathfrak{gl}_{m|n}[x]$ admits a natural structure of {\it restricted Lie superalgebra}.
That is, the even subalgebra $(\mathfrak{gl}_{m|n}[x])_{\bar 0}$ is a
restricted Lie algebra with the $p$-map $(\mathfrak{gl}_{m|n}[x])_{\bar 0}\rightarrow(\mathfrak{gl}_{m|n}[x])_{\bar 0}$
sending $a\mapsto a^{[p]}$,
and the odd part $(\mathfrak{gl}_{m|n}[x])_{\bar 1}$ is a restricted module by the adjoint action of the even subalgebra.
Then for each even element $a\in(\mathfrak{gl}_{m|n}[x])_{\bar 0}$, the element $a^p-a^{[p]}\in U(\mathfrak{gl}_{m|n}[x])$ is central.
Denote by $Z_p(\mathfrak{gl}_{m|n}[x])$ the subalgebra of $Z(\mathfrak{gl}_{m|n}[x])$ generated by all $a^p-a^{[p]}$ with $a\in(\mathfrak{gl}_{m|n}[x])_{\bar 0}$.
This subalgebra is often called the {\it $p$-center} of $U(\mathfrak{gl}_{m|n}[x])$.
It is natural to look for lifts of the $p$-central elements in $Z(\Ymn)$.
In Section 4, we will give a description of the {\it $p$-center} $Z_p(\Ymn)$ and
show that the generators of $Z_p(\Ymn)$ provide the lifts of generators for $Z_p(\mathfrak{gl}_{m|n}[x])$.
With this information in hand,
we show in particular that the center $Z(\Ymn)$ is generated by $Z_\HC(\Ymn)$ and $Z_p(\Ymn)$.

We organize this article in the following manner.
In Section \ref{section: current Lie superalgebra},
we introduce the current Lie superalgebra and determine the center of its universal enveloping algebra.
Following Nazarov's definition, we define the modular super Yangian $\Ymn$ in the RTT realization in Section \ref{section: modular superYangians}.
After recalling some basic properties of $\Ymn$,
we extend the Drinfeld-type presentation from characteristic zero to positive characteristic.
Section \ref{Section:center} is concerned with the center of $\Ymn$.
We investigate various $p$-central elements by employing the Drinfeld presentation.
Moreover, we give a description of the center of $\Ymn$ and obtain the precise formulas for the generators.
Section \ref{section:special super Yangian} is devoted to
the study about the special super Yangian $\SYmn$, which may be viewed as the modular version of the super Yangian for the Lie superalgebra $\mathfrak{sl}_{m|n}$. In particular, we obtain another description of the $p$-center of $\Ymn$ in terms of the RTT generators.

\emph{Throughout this paper, $\kk$ denotes an algebraically closed field of characteristic $\Char(\kk)=:p>0$}.

\bigskip
\section{The current superalgebra}\label{section: current Lie superalgebra}

\subsection{The current superalgebra}
Let $\gl_{m|n}[x]$ denote the {\it current superalgebra} is defined to be the Lie superalgebra $\gl_{m|n}[x]:=\gl_{m|n}\otimes\kk[x]$.
We will always denote the Lie algebra by $\fg$ and write $U(\fg)$ for its enveloping algebra and $S(\fg)$ for the symmetric superalgebra.
Now let the indices $i,j$ run through $1,\dots,m+n$.
Set $|i|=0$ if $1\leq i\leq m$ and $|i|=1$ if $m<i\leq m+n$.
The elements $e_{i,j}x^r:=e_{i,j}\otimes x^r$ with $r=0,1,2,\dots$ and $i,j=1,\dots,m+n$ make a basis of $\fg$.
The $\ZZ_2$-grading on $\fg$ is defined by $\deg e_{i,j}x^r=|i|+|j|$.
The supercommutation relations with $r,s\geq 0$ is given by
\begin{align}\label{Lie bracket of g}
[e_{i,j}x^r,e_{k,l}x^s]=\delta_{k,j}e_{i,l}x^{r+s}-(-1)^{(|i|+|j|)(|k|+|l)|}\delta_{l,i}e_{k,j}x^{r+s}.
\end{align}

The adjoint action of $\fg$ on itself extends uniquely to actions of $\fg$ on $U(\fg)$ and
$S(\fg)$ by derivations.
The corresponding invariant subalgebras are denoted $U(\fg)^{\fg}$ and $S(\fg)^{\fg}$.
In particular, the center $Z(\fg)=Z(\fg)_{\bar{0}}\oplus Z(\fg)_{\bar{1}}=U(\fg)^{\fg}$,
where
$$Z(\fg)_{i}=\{z\in U(\fg)_i;~za=(-1)^{ij}az,\forall~a\in(\fg)_j~\text{for}~j\in\ZZ_2\},~~i\in\ZZ_2.$$
There is one obvious family of even central elements in $U(\fg)$.
For any $r\in\NN$, we set
\begin{align}\label{center elements in g sigma}
z_r:=e_{1,1}x^r+\cdots+e_{m+n,m+n}x^r\in\fg.
\end{align}
Using (\ref{Lie bracket of g}) one can show by direct computation
that the set $\{z_r;~r\geq 0\}$ forms a basis for the center $\fz(\fg)$ of $\fg$,
so that $\kk[z_r;~r\geq 0]$ is a subalgebra of $Z(\fg)$.

\subsection{Symmetric invariants}
There is a filtration
\begin{align}
U(\fg)=\bigcup\limits_{r\geq 0}{\rm F}_rU(\fg)
\end{align}
of the enveloping algebra $U(\fg)$, which is defined by placing $e_{i,j}x^r$ in degree $r+1$,
i.e., ${\rm F}_rU(\fg)$ is the span of all monomials of the form $e_{i_1,j_1}x^{r_1}\cdots e_{i_s,j_s}x^{r_s}$ with total degree $(r_1+1)+\cdots+(r_s+1)\leq r$.
The associated graded algebra ${\rm gr}  U(\fg)$ is isomorphic (both as a graded algebra and as a graded $\fg$-module) to
$S(\fg)$.
It follows that
\begin{align}\label{graded Z(g_sigma) subset S(g)^g}
{\rm gr}  Z(\fg)\subseteq S(\fg)^{\fg}.
\end{align}

\begin{Lemma}\label{lemma symmetric invariant of S(gs)^gs}
The invariant algebra $S(\fg)^{\fg}$ is generated by $\{z_r;~r\geq 0\}$ together with $((\fg)_{\bar{0}})^p:=\{a^p;~a\in(\fg)_{\bar{0}}\}\subseteq S(\fg)$.
In fact, $S(\fg)^{\fg}$ is freely generated by
\begin{align}\label{free generators in S(gs)^gs}
\{z_r;~r\geq 0\}\cup\big\{(e_{i,j}x^r)^p;~1\leq i,j\leq m+n~{\rm with}~(i,j)\neq (1,1), r\geq 0, e_{i,j}x^r\in(\fg)_{\bar{0}}\big\}.
\end{align}
\end{Lemma}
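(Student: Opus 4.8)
The statement really has two independent halves: that the displayed set consists of invariants that generate $S(\fg)^{\fg}$, and that this set is algebraically independent. I would dispose of the routine half first. Each $z_r$ is central in $\fg$ by the computation recalled after \eqref{center elements in g sigma}, so $\ad(y)z_r=0$ for all $y\in\fg$ and $z_r\in S(\fg)^{\fg}$. For even $a\in(\fg)_{\bar 0}$ the element $a^p$ is killed by every derivation, since $\ad(y)(a^p)=p\,a^{p-1}\ad(y)(a)=0$ in characteristic $p$; hence $((\fg)_{\bar 0})^p\subseteq S(\fg)^{\fg}$. Because the diagonal generators $e_{i,i}x^r$ are even and pairwise commute, the freshman's dream yields the single identity
\[
z_r^p=\Bigl(\sum_{i=1}^{m+n}e_{i,i}x^r\Bigr)^p=\sum_{i=1}^{m+n}(e_{i,i}x^r)^p,
\]
so that $(e_{1,1}x^r)^p=z_r^p-\sum_{i\ge 2}(e_{i,i}x^r)^p$ lies in the subalgebra $A$ generated by the displayed set. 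Thus $A$ coincides with the algebra generated by $\{z_r\}$ together with all of $((\fg)_{\bar 0})^p$, which is the first assertion once generation is proved, and also explains the omission of the index $(1,1)$.

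For algebraic independence I would work inside the purely even polynomial algebra $S((\fg)_{\bar 0})$, which is free over its $p$-th power subalgebra $B:=S((\fg)_{\bar 0})^p$ with basis the monomials in the $e_{i,j}x^r$ of partial degrees $<p$. The elements $(e_{i,j}x^r)^p$ with $(i,j)\neq(1,1)$ form part of a free polynomial generating set of $B$, hence are independent; moreover $z_r$ has degree $1$, and the variable $e_{1,1}x^r$ occurs in $z_r$ but in none of the retained $p$-th powers. A direct degree and Frobenius bookkeeping then shows that adjoining the $z_r$ introduces no relation, giving the claimed freeness.

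The substance of the lemma is the reverse inclusion $S(\fg)^{\fg}\subseteq A$, and the plan is a graded Frobenius descent. Since $B$ is a polynomial subalgebra fixed by the action, $S(\fg)$ is a free $B$-module, so I would pass to the restricted quotient
\[
\bar S:=S(\fg)\big/(B_+)\;\cong\;\Bigl(\bigotimes_{\alpha}\kk[y_\alpha]/(y_\alpha^p)\Bigr)\otimes\Lambda((\fg)_{\bar 1}),
\]
where $B_+$ is the augmentation ideal of $B$ and $\alpha$ runs over the even basis vectors $y_\alpha=e_{i,j}x^r$. The $\fg$-action descends to $\bar S$. Using the total-degree grading together with a graded Nakayama argument (there is no Reynolds operator in characteristic $p$, so one argues degreewise, subtracting a suitable polynomial in the $z_r$ and then dividing by an even $p$-th power to induct) one reduces generation to the single claim that $\bar S^{\,\fg}=\kk[\bar z_r\,;\,r\ge 0]$, the image of $\{z_r\}$, where each $\bar z_r$ is nilpotent with $\bar z_r^{\,p}=0$.

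Establishing this reduced identity is the real obstacle, and I would attack it through the weight decomposition under the Cartan subalgebra $\Span\{e_{i,i}x^0\}$: an invariant must be a sum of weight-zero monomials, namely those whose row and column multisets agree. Feeding such a monomial to the raising and lowering operators $\ad(e_{i,j}x^0)$ and to the degree-shifting derivations $\ad(e_{i,j}x^a)$ with $a>0$, one shows that every off-diagonal or odd letter can be eliminated, so the surviving invariants are built only from the traces $\bar z_r$; this is the modular shadow of the classical fact that the semisimple current superalgebra $\fsl_{m|n}[x]$ carries no non-constant symmetric invariants. The points demanding the most care are exactly the odd letters, where the exterior relations and the sign factors in \eqref{Lie bracket of g} must be tracked and the case $p=2$ treated on its own, together with the check that each division performed in the Nakayama step keeps the output inside $A$.
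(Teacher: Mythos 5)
Your first half is fine and matches the paper: the invariance of $z_r$ and of $p$-th powers of even elements is routine, and the freshman's-dream identity $z_r^p=\sum_i(e_{i,i}x^r)^p$ correctly explains why $(e_{1,1}x^r)^p$ may be omitted from the free generating set. The architecture of your second half, however, defers all of the content to the claim $\bar S^{\,\fg}=\kk[\bar z_r]$ for the restricted quotient, and that claim is precisely where the proof has to happen; you assert it rather than prove it. Worse, the mechanism you propose for it does not work as stated: in characteristic $p$ the weights of the Cartan subalgebra $\Span\{e_{i,i}x^0\}$ take values in $\FF_p$, so ``weight zero'' is \emph{not} equivalent to ``row and column multisets agree'' (already $(e_{1,2})^a(e_{2,1}x)^b$ with $a\equiv b\ (\mathrm{mod}\ p)$ and $a\neq b$ is a weight-zero monomial whose letters cannot be paired off), and the subsequent ``elimination of every off-diagonal or odd letter'' by degree-zero raising and lowering operators is exactly the statement to be proved, not a step toward it. This is not merely a $p=2$ issue, and it is also where your freeness claim would need the same input.

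The idea your proposal is missing is the one the paper actually uses: do not rely on the degree-zero part of $\fg$ at all, but apply $\ad(e_{i,i}x^s)$ for $s$ strictly larger than every $x$-degree occurring in the given invariant $f$. Such a derivation replaces one letter $e_{i',j'}x^{r'}$ of a monomial by the \emph{fresh} letter $e_{i',j'}x^{s+r'}$, which occurs in no other term of the image, so no cancellation is possible and one reads off directly that the coefficient of each restricted monomial containing an off-diagonal, odd, or non-central diagonal letter vanishes (the relevant exponent being strictly between $0$ and $p$, hence invertible in $\kk$). The paper runs this argument directly on the decomposition of $S(\fg)$ as a free module over $I(\fg)=\kk[z_r][(e_{i,j}x^r)^p]$ with restricted-monomial basis, which also makes your separate Frobenius-descent/Nakayama reduction unnecessary: your $B_+$-adic reduction can be made rigorous (the $\fg$-action is $B$-linear, so invariants of the associated graded factor through $\bar S^{\,\fg}$), but it buys nothing once the large-$s$ derivation trick is available, and without that trick the reduced identity $\bar S^{\,\fg}=\kk[\bar z_r]$ remains unproven.
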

\begin{proof}
The proof is essentially the same as \cite[Lemma 3.2]{BT18},
except that we need consider the odd elements.
If $a\in(\fg)_{\bar{0}}$, then the Leibniz rule implies $a^p\in S(\fg)^{\fg}$.
Let $I(\fg)$ be the subalgebra of $S(\fg)^{\fg}$ generated by $\{z_r;~r\geq 0\}$ and $((\fg)_{\bar{0}})^p$.
Let
$$B_0:=\{(i,j,r);~1\leq i,j\leq m+n~{\rm with}~(i,j)\neq (1,1), r\geq 0, e_{i,j}x^r\in(\fg)_{\bar{0}}\},$$

$$B_1:=\{(i,j,r);~1\leq i,j\leq m+n~{\rm with}~r\geq 0, e_{i,j}x^r\in(\fg)_{\bar{1}}\},$$
and $B:=B_0\cup B_1$ for short.
Since the elements $\{z_r;~r\geq 0\}\cup\{e_{i,j}x^r;~(i,j,r)\in B_0\cup B_1\}$ give a basis of $\fg$,
it follows that
$$S(\fg)=\kk[z_r;r\geq 0][e_{i,j}x^r;~(i,j,r)\in B_0]\otimes\Lambda[e_{i,j}x^r;~(i,j,r)\in B_1],$$
$$I(\fg)=\kk[z_r;r\geq 0][(e_{i,j}x^r)^p;~(i,j,r)\in B_0],$$
where $\Lambda[e_{i,j}x^r;~(i,j,r)\in B_1]$ is the exterior algebra (cf. \cite[(1.5.1)]{CW13}).
Hence, $S(\fg)$ is free as an $I(\fg)$-module with basis $\{\prod_{(i,j,r)\in B_0\cup B_1}(e_{i,j}x^r)^{w(i,j,r)};~\omega\in\Omega\}$,
where
$$
\Omega:=
\left\{
\omega:B\rightarrow \NN;
\begin{array}{l}
0 \leq \omega(i,j,r)<p,~\forall (i,j,r)\in B_0~\text{and}~\omega(i,j,r)\in\{0,1\},~\forall (i,j,r)\in B_1\\
\text{$\omega(i,j,r)=0$ for all but finitely many }(i,j,r)\in
B
\end{array}
\right\}.
$$
Now, we must show that $S(\fg)^{\fg}\subseteq I(\fg)$.
Given $f\in S(\fg)^{\fg}$, we thus write
$$f=\sum\limits_{\omega\in\Omega}c_{\omega}\prod\limits_{(i,j,r)\in B}(e_{i,j}x^r)^{\omega(i,j,r)}$$
for $c_{\omega}\in I(\fg)$,
all but finitely many of which are zero.
Also fix a non-zero function $\omega$,
we have to prove that $c_{\omega}=0$.

Suppose first that $\omega(i,j,r)>0$ for some $(i,j,r)\in B$ with $i\neq j$.
Choose an integer $s$ that it is bigger than all $r'$ such that $\omega(i',j',r')>0$ for $(i',j',r')\in B$,
we have
\begin{multline*}
\ad(e_{i,i}x^s)(f) =
\sum_{\omega\in\Omega}
c_{\omega}
\sum_{\substack{(i',j',r')\in B \\ \omega(i',j',r')> 0}}
(-1)^{{\rm sgn}(\omega,i',j',r')}\omega(i',j',r')
(e_{i',j'} x^{r'})^{\omega(i',j',r')-1}
\left[e_{i,i}x^s, e_{i',j'}x^{r'}\right]\\\times
\prod_{\substack{(i'',j'',r'') \in B \\ (i'',j'',r'')\neq (i',j',r')}}
(e_{i'',j''}x^{r''})^{\omega(i'',j'',r'')},
\end{multline*}
where ${\rm sgn}(\omega,i',j',r')\in\{0,1\}$ depends on $\omega,i',j',r'$.
Thanks to the choice of $s$,
the coefficient of
$$
(e_{i, j}x^{r})^{\omega(i,j,r)-1}e_{i, j}x^{s+r}
\prod_{\substack{(i'',j'',r'')\in B \\ (i'',j'',r'')\neq (i,j,r)}}
  (e_{i'',j''}x^{r''})^{\omega(i'',j'',r'')}
$$
in this expression is $(-1)^{{\rm sgn}(\omega,i,j,r)}c_\omega\omega(i,j,r)$.
It must be zero since $f\in S(\fg)^{\fg}$.
As $\omega(i,j,r)$ is non-zero in $\kk$,
we conclude that $c_\omega = 0$ as required.

By the same token, we can treat the case that $\omega(j,j,r)>0$ for some $(j,j,r)\in B$.
\end{proof}

\subsection{Restricted Lie superalgebra}
A Lie superalgebra $\mathfrak{l}=\mathfrak{l}_{\bar{0}}\oplus\mathfrak{l}_{\bar{1}}$ is called a {\it restricted Lie superalgebra}
if $(\fl_{\bar{0}},[p])$ is a restricted Lie algebra and $\fl_{\bar{1}}$ is a restricted $\fl_{\bar{0}}$-module.
By definition, for each $x\in\fl_{\bar{0}}$, the element $x^p-x^{[p]}\in U(\fl)$ is central and the map $\xi:\fl\rightarrow U(\fl);~x\mapsto x^p-x^{[p]}$ is $p$-semilinear.

For any associative $\kk$-superalgebra $A=A_{\bar{0}}\oplus A_{\bar{1}}$,
there is a natural way to define a Lie bracket $[,]$ in $A$, i.e., by the equality,
\begin{align}\label{associative super is Lie super}
[a,b]:=ab-(-1)^{|a||b|}ba.
\end{align}
The Lie superalgebra $(A, [,])$ will be denoted $A^{-}$.
Since we are in characteristic $p>0$,
the mapping $a\rightarrow a^p;~a\in A_{\bar{0}}$ endows $A^-$ with the restricted structure.

\begin{Lemma}\label{lemma g and g_sigma are restricted}
The current superalgebra $\fg$ is a restricted Lie superalgebra with $p$-map defined on the basis by the rule $(ax^r)^{[p]}:=a^{[p]}x^{rp}$,
where $a^{[p]}$ denotes the $p\mathrm{th}$ matrix power of $a\in\mathfrak{gl}_m\oplus\mathfrak{gl}_n$.
\end{Lemma}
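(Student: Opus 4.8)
The plan is to realize $\fg$ as the Lie superalgebra $A^-$ attached to an explicit associative superalgebra and then invoke the restricted structure on $A^-$ recorded immediately before the statement. Take $A:=\Mat_{m|n}(\kk)\otimes\kk[x]\cong\Mat_{m|n}(\kk[x])$, the associative superalgebra of matrices with entries in $\kk[x]$, graded so that $\deg(e_{i,j}\otimes x^r)=|i|+|j|$, the polynomial variable $x$ being even and central. Because $x$ is even and central, the super-commutator in $A$ computes as $[e_{i,j}\otimes x^r,\,e_{k,l}\otimes x^s]=[e_{i,j},e_{k,l}]\otimes x^{r+s}$, which is exactly the bracket (\ref{Lie bracket of g}); hence $e_{i,j}x^r\mapsto e_{i,j}\otimes x^r$ identifies $\fg$ with $A^-$ as Lie superalgebras. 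By the discussion preceding the lemma, the map $a\mapsto a^p$ on $A_{\bar 0}$ endows $A^-$ with a restricted structure, and transporting it along this isomorphism produces the candidate $p$-map on $\fg$.

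It then remains to confirm the two conditions in the definition of a restricted Lie superalgebra and to check that the $p$-map agrees with the stated formula on the basis. The even part $A_{\bar 0}=(\gl_m\oplus\gl_n)\otimes\kk[x]$ coincides with $\fg_{\bar 0}$, and as the even part of an associative superalgebra it is an associative subalgebra of $A$, hence closed under $p$th powers; by Jacobson's theorem $(\fg_{\bar 0},\,a\mapsto a^p)$ is therefore a restricted Lie algebra. On a basis element $e_{i,j}x^r\in\fg_{\bar 0}$ (so $|i|=|j|$) the centrality of $x$ gives $(e_{i,j}x^r)^p=(e_{i,j})^p\otimes x^{rp}$, where $(e_{i,j})^p$ is the $p$th matrix power, and more generally $(ax^r)^{[p]}=a^{[p]}x^{rp}$ for a constant matrix $a\in\gl_m\oplus\gl_n$. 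This is precisely the asserted rule.

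For the odd part, recall that $\fg_{\bar 1}$ is a restricted $\fg_{\bar 0}$-module exactly when the adjoint representation satisfies $\ad(a^{[p]})\big|_{\fg_{\bar 1}}=\bigl(\ad a\big|_{\fg_{\bar 1}}\bigr)^p$ for all $a\in\fg_{\bar 0}$. Since $a$ is even, the super-commutator $\ad a=[a,-]$ is an ordinary derivation of $A$, so Jacobson's identity $[a^p,v]=(\ad a)^p(v)$ holds for every $v\in A$; specializing to $v\in\fg_{\bar 1}$ and using $a^{[p]}=a^p$ yields the required equality. Thus both axioms hold and $\fg$ is a restricted Lie superalgebra with the stated $p$-map.

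The only genuinely delicate point is that the $p$-map is $p$-semilinear rather than linear, so prescribing it on a basis is a priori ambiguous; the associative realization $\fg=A^-$ resolves this cleanly, because the $p$th-power map is defined on all of $A_{\bar 0}$ at once and we merely record its values on basis monomials. The super-grading causes no further trouble here: the elements $a$ entering both the $p$-map and Jacobson's identity are even, so every potential sign factor is trivial, and the odd generators appear only as module elements $v$ on which $\ad a$ acts as an ordinary operator.
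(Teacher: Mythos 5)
Your proof is correct and follows essentially the same route as the paper: both rest on the associative realization together with the general fact, recorded just before the lemma, that $A^-$ is restricted via $a\mapsto a^p$ on $A_{\bar 0}$. You merely make explicit the step the paper leaves as ``follows immediately,'' by passing from $\gl_{m|n}=(\Mat_{m|n})^-$ to the whole current superalgebra realized as $(\Mat_{m|n}(\kk[x]))^-$ with $x$ even and central, which is a clean way to justify the stated rule on basis elements.
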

\begin{proof}
Let ${\rm Mat}_{m|n}$ be the matrix superalgebra.
By definition, we have $\mathfrak{gl}_{m|n}=({\rm Mat}_{m|n})^{-}$,
so that $\mathfrak{gl}_{m|n}$ is restricted with the $p$-map given by the $p$th power of matrices.
Then the claim follows immediately from the rules of Lie bracket (\ref{Lie bracket of g}) and $p$-map.
\end{proof}

\subsection{The center of $U(\fg)$}
We refer to $Z_p(\fg):=\kk\langle x^p-x^{[p]};~x\in(\fg)_{\bar{0}}\rangle$ as the {\it$p$-center} of $U(\fg)$.
Since the $p$-map is $p$-semilinear, we have that
\begin{align}\label{p-center is free polynomial}
Z_p(\fg)=\kk\left[\big(e_{i,j}x^r\big)^p-\delta_{i,j}e_{i,j}x^{rp};~1\leq i,j\leq m+n, r\geq 0, |i|+|j|=0\right]
\end{align}
as a free polynomial algebra.

\begin{Theorem}\label{center of enveloping algebra of shifted current algebra}
The center $Z(\fg)$ of $U(\fg)$ is generated by $\{z_r;~r\geq 0\}$ and $Z_p(\fg)$.
In fact, $Z(\fg)$ is freely generated by
\begin{align}\label{free generator in Z(Ugs)}
\{z_r;~r\geq 0\}\cup\big\{(e_{i,j}x^r)^p-\delta_{i,j}e_{i,j}x^{rp};~(i,j)\neq (1,1), r\geq 0, |i|+|j|=0\big\}.
\end{align}
\end{Theorem}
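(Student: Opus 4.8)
The plan is to run the standard filtered--graded comparison, exploiting that the genuinely hard computation of the invariants has already been carried out in Lemma \ref{lemma symmetric invariant of S(gs)^gs}. First I would verify that both families really are central. The elements $z_r$ are central because, as noted after (\ref{center elements in g sigma}), the set $\{z_r;~r\geq 0\}$ spans the Lie center $\fz(\fg)\subseteq Z(\fg)$. For the second family, Lemma \ref{lemma g and g_sigma are restricted} shows that $\fg$ is a restricted Lie superalgebra, so for every even $a\in(\fg)_{\bar 0}$ the element $a^p-a^{[p]}$ lies in $Z(\fg)$; taking $a=e_{i,j}x^r$ with $|i|+|j|=0$ and using $(e_{i,j})^p=\delta_{i,j}e_{i,j}$ as matrices gives $a^{[p]}=\delta_{i,j}e_{i,j}x^{rp}$, so each $(e_{i,j}x^r)^p-\delta_{i,j}e_{i,j}x^{rp}$ is central. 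Let $Z'$ denote the subalgebra of $U(\fg)$ generated by the set (\ref{free generator in Z(Ugs)}); by the above $Z'\subseteq Z(\fg)$, and it remains to prove the reverse inclusion together with the freeness claim.

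Next I would pass to $\gr U(\fg)\cong S(\fg)$ and compute the symbols of the generators of $Z'$ in the filtration placing $e_{i,j}x^r$ in degree $r+1$. The element $z_r$ is homogeneous of degree $r+1$ with symbol $z_r\in S(\fg)$. For the $p$-power generators, $(e_{i,j}x^r)^p$ sits in degree $p(r+1)$ while $\delta_{i,j}e_{i,j}x^{rp}$ sits in degree $rp+1$; since $p(r+1)=rp+p>rp+1$, the leading term of $(e_{i,j}x^r)^p-\delta_{i,j}e_{i,j}x^{rp}$ is $(e_{i,j}x^r)^p$, with symbol $(e_{i,j}x^r)^p\in S(\fg)$. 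Hence the symbols of the elements of (\ref{free generator in Z(Ugs)}) are precisely the free generators of $S(\fg)^{\fg}$ listed in (\ref{free generators in S(gs)^gs}).

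Combining the two steps yields the chain $S(\fg)^{\fg}\subseteq\gr Z'\subseteq\gr Z(\fg)\subseteq S(\fg)^{\fg}$: the first inclusion holds because $\gr Z'$ contains the symbols computed above, which by Lemma \ref{lemma symmetric invariant of S(gs)^gs} generate all of $S(\fg)^{\fg}$; the middle inclusion follows from $Z'\subseteq Z(\fg)$; and the last is (\ref{graded Z(g_sigma) subset S(g)^g}). Thus all are equalities, and in particular $\gr Z'=\gr Z(\fg)$. Since $Z'\subseteq Z(\fg)$ is a filtered subalgebra with equal associated graded, a routine induction on filtration degree forces $Z'=Z(\fg)$, proving the first assertion. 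For freeness, I would use that $\gr Z(\fg)=S(\fg)^{\fg}$ is, by Lemma \ref{lemma symmetric invariant of S(gs)^gs}, a polynomial algebra freely generated by the symbols computed above; the standard lifting lemma for filtered algebras then shows that $Z(\fg)$ is itself a polynomial algebra on the elements (\ref{free generator in Z(Ugs)}).

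The main obstacle here is bookkeeping rather than conceptual, since Lemma \ref{lemma symmetric invariant of S(gs)^gs} already supplies the invariant theory. The points needing genuine care are the correct identification of the leading symbols---especially the strict inequality $p(r+1)>rp+1$, which guarantees that $(e_{i,j}x^r)^p$, and not the correction term $\delta_{i,j}e_{i,j}x^{rp}$, is the top term---and checking that the resulting symbols match the free generating set of $S(\fg)^{\fg}$ exactly, including the omission of the index $(1,1)$, which is accounted for by the separate presence of the $z_r$. The super-specific care lies in invoking the restricted-superalgebra centrality from Lemma \ref{lemma g and g_sigma are restricted} and in keeping the even/odd bookkeeping consistent with the identification $\gr U(\fg)\cong S(\fg)$ as graded $\fg$-modules.
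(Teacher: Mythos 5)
Your proposal is correct and follows essentially the same route as the paper, which simply defers to the argument of Brundan--Topley's Theorem 3.4: verify centrality of both families (via the Lie center and the restricted structure from Lemma \ref{lemma g and g_sigma are restricted}), identify the top symbols in $\gr U(\fg)\cong S(\fg)$, and sandwich $\gr Z(\fg)$ between $S(\fg)^{\fg}$ and the subalgebra generated by those symbols using Lemma \ref{lemma symmetric invariant of S(gs)^gs} and (\ref{graded Z(g_sigma) subset S(g)^g}). The details you flag as needing care (the inequality $p(r+1)>rp+1$, the matrix identity $(e_{i,j})^{[p]}=\delta_{i,j}e_{i,j}$, and the matching of generating sets with $(1,1)$ excluded) are exactly the right ones and are handled correctly.
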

\begin{proof}
The proof is similar to proof of \cite[Theorem 3.4]{BT18}, and will be skipped here.
\end{proof}

\begin{Remark}
In view of Theorem \ref{center of enveloping algebra of shifted current algebra},
the center $Z(\fg)$ consists of only even elements, i.e., $Z(\fg)=Z(\fg)_{\bar{0}}$.
This fact can be proved directly by the triangular decomposition of basic classical Lie
superalgebras (see \cite[(2.2.2)]{CW13}).
\end{Remark}

\section{Modular super Yangian $\Ymn$ and Drinfeld-type presentation}\label{section: modular superYangians}
In this section, we study the Yangian $Y_{m|n}$ in positive characteristic.

\subsection{\boldmath RTT presentation of $Y_{m|n}$}
Following \cite{Na91}, the super Yangian associated to the general linear Lie superalgebra $\mathfrak{gl}_{m|n}$,
denoted by $Y_{m|n}$, is the associated superalgebra over $\kk$ with the {\it RTT generators} $\{t_{i,j}^{(r)};~1\leq i,j\leq m+n, r\geq 1\}$ subject to the following relations:
\begin{align}\label{RTT relations}
\left[t_{i,j}^{(r)}, t_{k,l}^{(s)}\right] =(-1)^{|i||j|+|i||k|+|j||k|}\sum_{t=0}^{\min(r,s)-1}
\left(t_{k, j}^{(t)} t_{i,l}^{(r+s-1-t)}-
t_{k,j}^{(r+s-1-t)}t_{i,l}^{(t)}\right),
\end{align}
where the parity of $t_{i,j}^{(r)}$ is defined by $|i|+|j|~(\text{mod}~2)$,
and the bracket is understood as the supercommutator.
By convention, we set $t_{i,j}^{(0)}:=\delta_{i,j}$.

The element $t_{i,j}^{(r)};~r>0$
is called an {\it even} ({\it odd}, respectively)
element if its parity is $0$ ($1$, respectively).
We define the formal power series
\begin{align*}
t_{i,j}(u):= \sum_{r \geq 0}t_{i,j}^{(r)}u^{-r} \in Y_{m|n}[[u^{-1}]],
\end{align*}
and a matrix $T(u):=\big(t_{i,j}(u)\big)_{1\leq i,j\leq m+n}$.
It is easily seen that, in terms of the generating series,
the initial defining relation (\ref{RTT relations}) may be rewritten as follows:
\begin{align}\label{tiju tkl relation}
[t_{i,j}(u),t_{k,l}(v)]=\frac{(-1)^{|i||j|+|i||k|+|j||k|}}{(u-v)}(t_{k,j}(u)t_{i,l}(v)-t_{k,j}(v)t_{i,l}(v)).
\end{align}
Note that the matrix $T(u)$ is invertible,
we observe the following notation for the entries of the inverse of the matrix $T(u)$:
$$T(u)^{-1}=:\left(t_{i,j}'(u)\right)_{i,j=1}^{m+n},$$
and we also have another relation (see \cite[(5)]{Gow07}, \cite[(2.4)]{Peng11}):
\begin{align}\label{tiju tkl' relation}
[t_{i,j}(u),t'_{k,l}(v)]=\frac{(-1)^{|i||j|+|i||k|+|j||k|}}{(u-v)}(\delta_{k,j}\sum\limits_{s=1}^{m+n}t_{i,s}(u)t'_{s,l}(v)-\delta_{i,l}\sum\limits_{s=1}^{m+n}t'_{k,s}(u)t_{s,j}(v)).
\end{align}

For homogeneous elements $x_1,\dots,x_s$ in a superalgebra $A$,
a {\it supermonomial} in $x_1,\dots,x_s$ means a monomial of the form $x_1^{i_1}\dots x_s^{i_s}$ for some $i_1,\dots,j_s\in\ZZ_{>0}$ and
$i_j\leq 1$ if $x_j$ is odd.
The following proposition is a {\it PBW theorem} for $Y_{m|n}$,
where the proof in \cite[Theorem 1]{Gow07} works perfectly in positive characteristic.

\begin{Theorem}\label{PBW theorem Ymn}
Ordered supermonomial in the generators $\{t_{i,j}^{(r)};~1\leq i,j\leq m+n, r\geq 1\}$ taken in some fixed order forms a linear basis for $Y_{m|n}$.
\end{Theorem}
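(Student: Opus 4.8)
The plan is to prove spanning and linear independence separately, the latter being the real content. For spanning, I would equip $\Ymn$ with the filtration assigning $t_{i,j}^{(r)}$ degree $r$, so that a supermonomial $t_{i_1,j_1}^{(r_1)}\cdots t_{i_s,j_s}^{(r_s)}$ has degree $r_1+\cdots+r_s$. Reading off (\ref{RTT relations}), the supercommutator $[t_{i,j}^{(r)},t_{k,l}^{(s)}]$ is a combination of products $t^{(c)}_{\cdot,\cdot}t^{(f)}_{\cdot,\cdot}$ with $c+f=r+s-1$, hence of strictly smaller degree; for an odd generator one also has $(t_{i,j}^{(r)})^{2}=\tfrac12[t_{i,j}^{(r)},t_{i,j}^{(r)}]$, again of smaller degree. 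Thus $\gr\Ymn$ is supercommutative with each odd symbol occurring at most once, and an induction on degree (moving generators into the fixed order one adjacent transposition at a time, each swap producing only lower-degree corrections) shows that the ordered supermonomials span $\Ymn$. Equivalently, there is a surjection $S(\gl_{m|n}[x])\tha\gr\Ymn$, $e_{i,j}x^{r-1}\mapsto\overline{t_{i,j}^{(r)}}$, of the supersymmetric algebra onto $\gr\Ymn$.

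It then remains to prove that these ordered supermonomials are linearly independent, i.e.\ that the surjection above is an isomorphism. Following the characteristic-zero argument, I would separate them using finitely many representations built from two homomorphisms intrinsic to the RTT presentation: the evaluation map $\ev\colon\Ymn\to U(\gl_{m|n})$, given on series by $t_{i,j}(u)\mapsto\delta_{i,j}+(-1)^{|i|}e_{i,j}u^{-1}$, and the coproduct $\Delta\colon\Ymn\to\Ymn\otimes\Ymn$, $\Delta(t_{i,j}(u))=\sum_{k}t_{i,k}(u)\otimes t_{k,j}(u)$. Each is checked to respect (\ref{RTT relations}) by a direct computation; since the relations are defined over $\ZZ$ and the verification never divides by an integer, both maps are well defined over $\kk$. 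For distinct scalars $a_1,\dots,a_k\in\kk$ (available since $\kk$ is infinite) let $\tau_{a}$ denote the shift $t_{i,j}(u)\mapsto t_{i,j}(u-a)$, and set $\rho_k:=(\ev\circ\tau_{a_1}\otimes\cdots\otimes\ev\circ\tau_{a_k})\circ\Delta^{(k)}\colon\Ymn\to U(\gl_{m|n})^{\otimes k}$, where $\Delta^{(k)}$ is the iterated coproduct.

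The crux is computing $\rho_k(t_{i,j}^{(r)})$ in the PBW filtration of $U(\gl_{m|n})^{\otimes k}$, whose associated graded is the supersymmetric algebra $S(\gl_{m|n}^{\oplus k})$. Expanding $(u-a)^{-1}=\sum_{r\ge1}a^{r-1}u^{-r}$ and keeping the terms linear in the matrix units, one finds that $\rho_k(t_{i,j}^{(r)})$ has zero constant term and lowest-degree component the signed power sum $p_{i,j}^{(r)}:=(-1)^{|i|}\sum_{b=1}^{k}a_{b}^{\,r-1}e_{i,j}^{(b)}$, where $e_{i,j}^{(b)}$ denotes $e_{i,j}$ placed in the $b$-th tensor slot. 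For $1\le r\le k$ the matrix $(a_{b}^{\,r-1})_{b,r}$ is Vandermonde, hence invertible, so $\{p_{i,j}^{(r)}:1\le r\le k,\ 1\le i,j\le m+n\}$ is a homogeneous basis of $\gl_{m|n}^{\oplus k}$. Consequently $\rho_k$ carries an ordered supermonomial in the $t_{i,j}^{(r)}$ of superscript $\le k$ to an element whose lowest PBW-degree symbol is the corresponding ordered supermonomial in the $p_{i,j}^{(r)}$, and the latter are linearly independent in $S(\gl_{m|n}^{\oplus k})$ by the PBW theorem for the enveloping superalgebra $U(\gl_{m|n})^{\otimes k}$. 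Taking the lowest-degree part of any finite linear relation among ordered supermonomials (whose superscripts are bounded by a suitable $k$) then forces all coefficients to vanish, giving independence; letting $k\to\infty$ completes the proof. The main obstacle is exactly this leading-term computation: one must propagate the sign factors through $\Delta^{(k)}$ and confirm that odd generators contract to the exterior (square-zero) variables and even generators to the polynomial variables of the target, so that the two super PBW bases are matched correctly. This is where the care with odd elements is essential, whereas every other ingredient—well-definedness of $\ev$ and $\Delta$, Vandermonde nondegeneracy, and super PBW for $U(\gl_{m|n})$—is characteristic-free, so the argument transfers verbatim to $\Char\kk=p>0$.
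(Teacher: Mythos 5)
Your argument is exactly the one the paper relies on: the paper's ``proof'' is a citation to Gow's Theorem 1, whose proof is precisely this combination of a filtration/straightening argument for spanning and the maps $(\ev\circ\tau_{a_1}\otimes\cdots\otimes\ev\circ\tau_{a_k})\circ\Delta^{(k)}$ with a Vandermonde leading-term computation for independence, so the two approaches coincide. One caveat: your rewriting $(t_{i,j}^{(r)})^{2}=\tfrac12[t_{i,j}^{(r)},t_{i,j}^{(r)}]$ divides by $2$, so the spanning step (and indeed the statement with exponent $\leq 1$ on odd generators) requires $p\neq 2$ --- a caveat the paper's own formulation inherits as well, since at $p=2$ one has $Y_{m|n}\cong Y_{m+n}$ and the restriction on odd exponents cannot hold verbatim.
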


\subsection{Loop filtration}
Define the {\it loop filtration} on $Y_{m|n}$
\begin{align}\label{loop filtration}
Y_{m|n}=\bigcup_{r\geq 0}{\rm F}_rY_{m|n}
\end{align}
by setting ${\rm deg}t_{i,j}^{(r)}=r-1$,
i.e., ${\rm F}_rY_{m|n}$ is the span of all supermonomials of the form $t_{i_1,j_1}^{(r_1)}\dots t_{i_m,j_m}^{(r_m)}$ with $(r_1-1)+\cdots+(r_m-1)\leq r$.
To describe the associated graded superalgebra ${\rm gr} Y_{m|n}$,
We recall that $U(\fg)$ has the natural filtration and grading with ${\rm deg}e_{i,j}x^r=r$.

\begin{Lemma}\label{lemma U(g) isomorphic to graded of loop filtration}
The assignment
$$t_{i,j}^{(r)}\mapsto (-1)^{|i|}e_{i,j}x^{r-1}$$
gives rise to an isomorphism ${\rm gr} Y_{m|n}\cong U(\fg)$ of graded superalgebras.
\end{Lemma}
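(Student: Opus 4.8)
The plan is to construct the inverse map $\psi\colon U(\fg)\to\gr\Ymn$ and to identify it with the stated assignment; building the map out of $U(\fg)$ is cleaner than building it into $U(\fg)$, since $U(\fg)$ carries the universal property of an enveloping algebra. First I would record the bracket relations of $\gr\Ymn$. Writing $\bar t_{i,j}^{(r)}$ for the image of $t_{i,j}^{(r)}$ in the degree $(r-1)$ component of $\gr\Ymn$, I examine the RTT relation (\ref{RTT relations}). The product $t_{i,j}^{(r)}t_{k,l}^{(s)}$ lies in ${\rm F}_{r+s-2}\Ymn$, and on the right-hand side only the summand $t=0$, namely $\delta_{k,j}t_{i,l}^{(r+s-1)}-\delta_{i,l}t_{k,j}^{(r+s-1)}$, reaches filtration degree $r+s-2$; every summand with $t\ge 1$ has strictly smaller degree. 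Hence in $\gr\Ymn$ the supercommutator lands in the top degree and
$$[\bar t_{i,j}^{(r)},\bar t_{k,l}^{(s)}]=(-1)^{|i||j|+|i||k|+|j||k|}\big(\delta_{k,j}\bar t_{i,l}^{(r+s-1)}-\delta_{i,l}\bar t_{k,j}^{(r+s-1)}\big).$$
In particular $\gr\Ymn$ is generated by the $\bar t_{i,j}^{(r)}$ subject at least to these relations.

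Next I would build the map. By the universal property of $U(\fg)$, to define an algebra homomorphism $\psi\colon U(\fg)\to\gr\Ymn$ it suffices to give a homomorphism of Lie superalgebras $\fg\to(\gr\Ymn)^-$; I set $\psi(e_{i,j}x^{r-1})=(-1)^{|i|}\bar t_{i,j}^{(r)}$, which respects the $\ZZ_2$-grading since both sides have parity $|i|+|j|$. The one thing to verify is that $\psi$ carries the bracket (\ref{Lie bracket of g}) of $\fg$ to the bracket displayed above. This is a direct comparison of sign factors: matching the $\delta_{k,j}$- and $\delta_{i,l}$-terms separately and using $|j|^2=|j|$, one checks that the common exponent $|i|+|k|+|i||j|+|i||k|+|j||k|$ appears on both sides. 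The sign bookkeeping is the only delicate point of the argument, and it is precisely here that the factor $(-1)^{|i|}$ is forced.

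Finally I would prove bijectivity. Surjectivity is immediate because $\gr\Ymn$ is generated by the $\bar t_{i,j}^{(r)}$, all of which lie in the image. For injectivity I invoke the PBW theorem (Theorem \ref{PBW theorem Ymn}): ordered supermonomials in $\{t_{i,j}^{(r)}\}$ form a basis of $\Ymn$. Because the supercommutator of two generators has the \emph{same} loop degree as their product (as observed above), reordering a supermonomial never raises its filtration degree, so the supermonomials of degree $\le r$ span ${\rm F}_r\Ymn$ and hence form a basis of it; consequently their symbols — the ordered supermonomials in the $\bar t_{i,j}^{(r)}$ — form a basis of $\gr\Ymn$. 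Since $\psi$ sends the PBW basis of $U(\fg)$ (ordered supermonomials in the $e_{i,j}x^s$) bijectively, up to the scalar signs, onto this basis, it is an isomorphism of graded superalgebras. Its inverse is exactly the assignment $t_{i,j}^{(r)}\mapsto(-1)^{|i|}e_{i,j}x^{r-1}$, which is the claim.
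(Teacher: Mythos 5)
Your proof is correct and follows essentially the same route as the paper: extract the top-degree part of the RTT relation to get the bracket in $\gr\Ymn$, match signs against (\ref{Lie bracket of g}), and use the PBW basis of Theorem \ref{PBW theorem Ymn} to see that a basis goes to a basis. The only (harmless) difference is that you build the map in the direction $U(\fg)\to\gr\Ymn$ via the universal property of the enveloping algebra and then invert it, which is a clean way of making precise the paper's terse assertion that the map $t_{i,j}^{(r)}\mapsto(-1)^{|i|}e_{i,j}x^{r-1}$ is well defined.
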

\begin{proof}
Relation (\ref{RTT relations}) implies that
\begin{align*}
[\gr_{r} t_{i,j}^{(r)}, \gr_{s} t_{k,l}^{(s)}] &= [t_{i,j}^{(r)}, t_{k,l}^{(s)}] +
{\rm F}_{r+s-3}Y_{m|n}\\
&=(-1)^{|i||j|+|i||k|+|j||k|}(\delta_{k,j}t_{i,l}^{(r+s-1)}-\delta_{i,l}t_{k,j}^{(r+s-1)})+{\rm F}_{r+s-3}Y_{m|n}\\
&=(-1)^{|i||j|+|i||k|+|j||k|}(\delta_{k,j}\gr_{r+s-2}t_{i,l}^{(r+s-1)}-\delta_{i,l}\gr_{r+s-2}t_{k,j}^{(r+s-1)}).
\end{align*}
Comparing with (\ref{Lie bracket of g}),
we deduce that the map in the statement of the
lemma is well defined.
To see that it is an isomorphism,
one uses the PBW basis from Theorem \ref{PBW theorem Ymn} to see that a basis for $\gr Y_{m|n}$
is sent to a basis for $U(\fg)$.
\end{proof}

\subsection{\boldmath Gauss decomposition and quasideterminants}\label{subsection Gauss decomp}
Note that the leading minors of the matrix $T(u)$ are always invertible and hence the matrix $T(u)$ possesses a Gauss decomposition
\begin{align}\label{gauss decomp}
T(u)=F(u)D(u)E(u)
\end{align}
for unique matrices
$$
D(u) = \left(
\begin{array}{cccc}
d_{1}(u) & 0&\cdots&0\\
0 & d_{2}(u) &\cdots&0\\
\vdots&\vdots&\ddots&\vdots\\
0&0 &\cdots&d_{m+n}(u)
\end{array}
\right),
$$$$
E(u) =
\left(
\begin{array}{cccc}
1 & e_{1,2}(u) &\cdots&e_{1,m+n}(u)\\
0 & 1 &\cdots&e_{2,m+n}(u)\\
\vdots&\vdots&\ddots&\vdots\\
0&0 &\cdots&1
\end{array}
\right),\:
F(u) = \left(
\begin{array}{cccc}
1 & 0 &\cdots&0\\
f_{2,1}(u) & 1 &\cdots&0\\
\vdots&\vdots&\ddots&\vdots\\
f_{m+n,1}(u)&f_{m+n,2}(u) &\cdots&1
\end{array}
\right).
$$
In terms of quasideterminants of \cite{GR97},
we have the following descriptions (cf. \cite[Section 3]{Gow07}):
\begin{align}\label{quasideterminants D}
d_i(u) =
\left|
\begin{array}{cccc}
t_{1,1}(u) & \cdots & t_{1,i-1}(u)&t_{1,i}(u)\\
\vdots & \ddots &\vdots&\vdots\\
t_{i-1,1}(u)&\cdots&t_{i-1,i-1}(u)&t_{i-1,i}(u)\\
t_{i,1}(u) & \cdots & t_{i,i-1}(u)&
\hbox{\begin{tabular}{|c|}\hline$t_{i,i}(u)$\\\hline\end{tabular}}
\end{array}
\right|,
\end{align}
\begin{equation}\label{quasideterminants E}
e_{i,j}(u) =
d_i(u)^{-1} \left|
\begin{array}{cccc}
t_{1,1}(u) & \cdots &t_{1,i-1}(u)& t_{1,j}(u)\\
\vdots & \ddots &\vdots&\vdots\\
t_{i-1,1}(u) & \cdots &t_{i-1,i-1}(u)&t_{i-1,j}(u)\\
t_{i,1}(u) & \cdots & t_{i,i-1}(u)&
\hbox{\begin{tabular}{|c|}\hline$t_{i,j}(u)$\\\hline\end{tabular}}
\end{array}
\right|,
\end{equation}
\begin{equation}\label{quasideterminants F}
f_{j,i}(u) =
\left|
\begin{array}{cccc}
t_{1,1}(u) & \cdots &t_{1,i-1}(u)& t_{1,i}(u)\\
\vdots & \ddots &\vdots&\vdots\\
t_{i-1,1}(u) & \cdots & t_{i-1,i-1}(u)&t_{i-1,i}(u)\\
t_{j,1}(u) & \cdots & t_{j,i-1}(u)&
\hbox{\begin{tabular}{|c|}\hline$t_{j,i}(u)$\\\hline\end{tabular}}
\end{array}
\right|{d}_i(u)^{-1}.
\end{equation}
We use the following notation for the coefficients:
$$d_i(u)=\sum\limits_{r\geq 0}d_i^{(r)}u^{-r};~~~(d_i(u))^{-1}=\sum\limits_{r\geq 0}d_i'^{(r)}u^{-r};$$
$$e_{i,j}(v)=\sum\limits_{r\geq 1}e_{i,j}^{(r)}v^{-r};~~~f_{j,i}(v)=\sum\limits_{r\geq 1}f_{j,i}^{(r)}v^{-r}.$$
Let $e_j(u):=e_{j,j+1}(u),~f_j(v):=f_{j+1,j}(v)$ for short.
By the above, we immediately have $e_{j-1}^{(1)}=t_{j-1,j}^{(1)}$ and $f_{j-1}^{(1)}=t_{j,j-1}^{(1)}$.
By induction, one may show that for each pair $i,j$ such that $1<i+1<j\leq m+n-1$, we have
\begin{align}\label{induction eij,fij}
e_{i,j}^{(r)}=(-1)^{|j-1|}[e_{i,j-1}^{(r)},e_{j-1}^{(1)}];~f_{j,i}^{(r)}=(-1)^{|j-1|}[f_{j-1}^{(1)},f_{j-1,i}^{(r)}].
\end{align}

By multiplying out the matrix product (\ref{gauss decomp}),
we see that each $t_{i,j}^{(r)}$ can be expressed as a sum of monomials in $d_i^{(r)}, e_{i,j}^{(r)}$ and $f_{j,i}^{(r)}$,
appearing in certain order that all $f's$ before $d's$ and all $d's$ before $e's$.
By (\ref{induction eij,fij}), it is enough to use $d_i^{(r)}, e_{j}^{(r)}$ and $f_j^{(r)}$ only, so that we have the following theorem.

\begin{Theorem}\label{Ymn generated by fde}
The super Yangian $Y_{m|n}$ is generated as an algebra by the following elements:
$$\{d_i^{(r)};~1\leq i\leq m+n, r\geq 0\}\cup\{e_j^{(r)},f_j^{(r)};~1\leq j\leq m+n-1,r\geq 1\}.$$
\end{Theorem}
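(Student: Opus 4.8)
The plan is to reduce everything to the PBW theorem (Theorem \ref{PBW theorem Ymn}), which already guarantees that the RTT generators $\{t_{i,j}^{(r)}\}$ generate $\Ymn$. Thus it suffices to show that each $t_{i,j}^{(r)}$ lies in the subalgebra $\cA$ generated by the elements listed in the statement. I would carry this out in two independent reductions: first express every $t_{i,j}^{(r)}$ in terms of the full family of Gauss coefficients $d_i^{(r)}$, $e_{i,j}^{(r)}$, $f_{j,i}^{(r)}$ (with arbitrary $i,j$); then express the non-adjacent $e_{i,j}^{(r)}$, $f_{j,i}^{(r)}$ in terms of the adjacent (``simple'') ones $e_j^{(r)}$, $f_j^{(r)}$.

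For the first reduction, I would expand the matrix identity (\ref{gauss decomp}), $T(u)=F(u)D(u)E(u)$, entrywise. Using the conventions $f_{i,i}(u)=e_{i,i}(u)=1$ together with the triangularity of $E(u)$ and $F(u)$, the $(i,j)$ entry reads
\begin{align*}
t_{i,j}(u)=\sum_{k=1}^{\min(i,j)} f_{i,k}(u)\, d_k(u)\, e_{k,j}(u).
\end{align*}
Comparing coefficients of $u^{-r}$ on both sides writes each $t_{i,j}^{(r)}$ as a finite sum of products of the coefficients $f_{i,k}^{(a)}$, $d_k^{(b)}$, $e_{k,j}^{(c)}$. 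Note that only $D(u)$ itself occurs, not its inverse, so no expansion of $d_k(u)^{-1}$ is needed at this stage. Hence $\Ymn$ is generated by the full collection $\{d_i^{(r)}\}\cup\{e_{i,j}^{(r)}\}\cup\{f_{j,i}^{(r)}\}$.

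For the second reduction, I would argue by induction on the width $j-i$ that every $e_{i,j}^{(r)}$ and $f_{j,i}^{(r)}$ lies in $\cA$. The base case $j=i+1$ is immediate, since $e_{i,i+1}^{(r)}=e_i^{(r)}$ and $f_{i+1,i}^{(r)}=f_i^{(r)}$ are among the prescribed generators. For the inductive step, formula (\ref{induction eij,fij}) expresses $e_{i,j}^{(r)}$ as $\pm[e_{i,j-1}^{(r)},e_{j-1}^{(1)}]$, where $e_{i,j-1}^{(r)}$ has strictly smaller width and so lies in $\cA$ by the inductive hypothesis, while $e_{j-1}^{(1)}$ is a simple generator; the case of $f_{j,i}^{(r)}$ is entirely symmetric. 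Combining the two reductions yields $\cA=\Ymn$, as claimed.

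I expect this argument to be essentially bookkeeping rather than to involve a genuine obstacle, since both key inputs—the entrywise Gauss expansion and the commutator recursion (\ref{induction eij,fij})—are already available. The only point that requires a little care is that the sign factors produced by the super-structure, both in (\ref{induction eij,fij}) and in the entrywise product, are irrelevant to a generation statement: they do not affect membership in the subalgebra $\cA$, so one need only track which monomials appear and not their scalar coefficients.
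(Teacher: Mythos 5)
Your argument is correct and is essentially the paper's own proof: the paper likewise obtains the theorem by expanding the Gauss decomposition $T(u)=F(u)D(u)E(u)$ entrywise to write each $t_{i,j}^{(r)}$ in terms of the Gauss coefficients, and then invokes (\ref{induction eij,fij}) to reduce the non-simple $e_{i,j}^{(r)}$, $f_{j,i}^{(r)}$ to the adjacent generators. Your explicit formula $t_{i,j}(u)=\sum_{k=1}^{\min(i,j)} f_{i,k}(u)\,d_k(u)\,e_{k,j}(u)$ and the induction on $j-i$ just make precise what the paper states in one sentence.
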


\subsection{Maps between Super Yangians}
To explicitly write down the relations among the Drinfeld generators,
we start with the special cases when $m$ and $n$ are either $1$ or $2$, that are relatively less complicated,
and then to apply the maps in this section to obtain the relations in the general case.

\begin{Proposition}\label{proposition auto}
\begin{enumerate}
\item The map $\rho_{m|n}:Y_{m|n}\rightarrow Y_{n|m}$ defined by
$$\rho_{m|n}(t_{i,j}(u))=t_{m+n+1-i,m+n+1-j}(-u)$$
is an algebra isomorphism.
\item The map $\omega_{m|n}:Y_{m|n}\rightarrow Y_{m|n}$ defined by
$$\omega_{m|n}(T(u))=(T(-u))^{-1}$$
is an algebra isomorphism.
\item For any $k\in\ZZ_{\geq 0}$, the map $\psi_k: Y_{m|n}\rightarrow Y_{m+k|n}$ defined by
$$\psi_k=\omega_{m+k|n}\circ\varphi_{m|n}\circ\omega_{m|n},$$
where $\varphi_{m|n}: Y_{m|n}\rightarrow Y_{m+k|n}$ is the injective algebra homomorphism which sends each $t_{i,j}^{(r)}\in Y_{m|n}$ to $t_{k+i,k+j}^{(r)}\in Y_{m+k|n}$.
\item The map $\zeta_{m|n}:Y_{m|n}\rightarrow Y_{n|m}$ defined by
$$\zeta_{m|n}=\rho_{m|n}\circ\omega_{m|n}$$
is an algebra isomorphism.
\end{enumerate}
\end{Proposition}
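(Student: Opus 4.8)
The four maps split into two that require genuine verification, $\rho_{m|n}$ and $\omega_{m|n}$, and two that then follow formally. I would prove them in the stated order.

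For $(1)$, the plan is to check directly that $\rho_{m|n}$ preserves the defining relations (\ref{RTT relations}). Write $i^{\ast}:=m+n+1-i$ and let $|\cdot|'$ be the parity function of $Y_{n|m}$, so that $i\le m\iff i^{\ast}>n$ and hence $|i^{\ast}|'\equiv 1+|i|\pmod 2$. Reversing the indices flips each parity but leaves sums of parities unchanged, $|i^{\ast}|'+|j^{\ast}|'\equiv|i|+|j|$, so $\rho_{m|n}$ is parity preserving. A short computation modulo $2$ gives
\[
|i^{\ast}|'\,|j^{\ast}|'+|i^{\ast}|'\,|k^{\ast}|'+|j^{\ast}|'\,|k^{\ast}|'
\equiv 1+|i||j|+|i||k|+|j||k|\pmod 2,
\]
so the sign prefactor in the image of (\ref{tiju tkl relation}) acquires an extra $-1$. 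This is exactly cancelled by the substitution $u\mapsto-u$, $v\mapsto-v$, under which $(u-v)^{-1}$ becomes $(v-u)^{-1}=-(u-v)^{-1}$; the two sign changes cancel and the right-hand side of (\ref{tiju tkl relation}) transforms correctly. Thus $\rho_{m|n}$ is an algebra homomorphism, and since $i^{\ast\ast}=i$ it satisfies $\rho_{n|m}\circ\rho_{m|n}=\id$, hence is an isomorphism.

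The substantive step is $(2)$. Here I would first establish that the entries $t'_{i,j}(u)$ of $T(u)^{-1}$ satisfy a relation of the same shape as (\ref{tiju tkl relation}) but with the factor $(u-v)^{-1}$ replaced by $(v-u)^{-1}$; this follows most transparently from the matrix ($R$-matrix) form of the RTT relation by inverting it and using the symmetry of the rational kernel, and can equivalently be extracted from the mixed relation (\ref{tiju tkl' relation}). Granting this, the substitution $u\mapsto-u$, $v\mapsto-v$ turns $(v-u)^{-1}$ back into $(u-v)^{-1}$, so the series $t'_{i,j}(-u)$ satisfy precisely (\ref{tiju tkl relation}); since $\omega_{m|n}$ fixes indices there is no parity contribution to track. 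Hence $\omega_{m|n}(T(u))=(T(-u))^{-1}$ defines an algebra homomorphism. Finally it is an involution: because $\omega_{m|n}$ is an algebra map it commutes with matrix inversion, and replacing $u$ by $-u$ in the defining identity gives $\omega_{m|n}(T(-u))=T(u)^{-1}$, whence
\[
\omega_{m|n}^{2}(T(u))=\omega_{m|n}\big((T(-u))^{-1}\big)=\big(\omega_{m|n}(T(-u))\big)^{-1}=\big(T(u)^{-1}\big)^{-1}=T(u).
\]
So $\omega_{m|n}^{2}=\id$ and $\omega_{m|n}$ is an isomorphism. The main obstacle is the careful bookkeeping of the Koszul signs in deriving the inverse relation in the super setting.

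Statements $(3)$ and $(4)$ are then formal. For $(3)$ I would first confirm that $\varphi_{m|n}$ is an injective algebra homomorphism: the shift $i\mapsto k+i$ preserves parity, since $k+i\le m+k\iff i\le m$, and both the index coincidences and the parity-dependent signs on the right-hand side of (\ref{RTT relations}) are invariant under a uniform shift of all indices, so the relations are preserved; injectivity follows from the PBW basis of Theorem \ref{PBW theorem Ymn}, as ordered supermonomials are sent to ordered supermonomials. Consequently $\psi_k=\omega_{m+k|n}\circ\varphi_{m|n}\circ\omega_{m|n}$ is the composite of two isomorphisms (by $(2)$) with one injection, and is therefore a well-defined injective algebra homomorphism. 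For $(4)$, $\zeta_{m|n}=\rho_{m|n}\circ\omega_{m|n}$ is the composite of the isomorphisms furnished by $(1)$ and $(2)$, and hence is itself an isomorphism $Y_{m|n}\to Y_{n|m}$.
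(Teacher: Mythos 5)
Your proof is correct and takes essentially the same route as the paper, whose entire argument is the one-line remark that the claims ``follow easily from the defining relations'' together with a pointer to Section 4 of Gow's paper; you have simply written out the parity and Koszul-sign bookkeeping (the computation $|i^{\ast}|'\equiv 1+|i|$, the extra $-1$ in the sign prefactor cancelling against $u\mapsto -u$, and the inverted RTT relation for $T(u)^{-1}$) that the paper leaves implicit. No objections.
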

\begin{proof}
The proof follows easily from the defining relations, see also \cite[Section 4]{Gow07}.
\end{proof}

We call $\psi_k$ the {\it shift map} and $\zeta_{m|n}$ the {\it swap map}.
The following Proposition is due to Gow and the proof given in characteristic zero in \cite[Proposition 1, Lemma 2]{Gow07} works as well in positive characteristic.
\begin{Proposition}\label{image of zeta and psi_k}
\begin{enumerate}
\item Let $1\leq i\leq m+n$ and $1\leq j\leq m+n-1$. We have
\begin{align*}
\zeta_{m|n}(d_i(u))=(d_{m+n+1-i}(u))^{-1},
\end{align*}
\begin{align*}
\zeta_{m|n}(e_j(u))=-f_{m+n-j}(u),
\end{align*}
\begin{align*}
\zeta_{m|n}(f_j(u))=-e_{m+n-j}(u).
\end{align*}
\item For $k,l\geq 1$, we have
\begin{align}\label{psi d}
\psi_k(d_l(u))=d_{k+l}(u),
\end{align}
\begin{align}\label{psi e}
\psi_k(e_l(u))=e_{k+l}(u),
\end{align}
\begin{align}\label{psi f}
\psi_k(f_l(u))=f_{k+l}(u).
\end{align}
\item The subalgebras $Y_k$ and $\psi_k(Y_{m|n})$ in $Y_{m+k|n}$ supercommute with each other.
\end{enumerate}
\end{Proposition}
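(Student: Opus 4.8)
The plan is to treat all three maps uniformly through one principle: an algebra homomorphism respects the Gauss factorisation (\ref{gauss decomp}). If $\theta$ is an algebra map and $\theta(T(u))$ can be rewritten as a product (lower unitriangular)$\cdot$(diagonal)$\cdot$(upper unitriangular), then by uniqueness of the Gauss decomposition the three triangular factors must be $\theta(F(u)),\theta(D(u)),\theta(E(u))$, so one simply reads off $\theta(d_i),\theta(e_j),\theta(f_j)$ from their entries. The whole task thus reduces to producing, for $\theta=\zeta_{m|n}$ and $\theta=\psi_k$, a clean matrix identity for $\theta(T(u))$ and recognising its Gauss form. Throughout I treat $T(u)$ as an ordinary matrix over the superalgebra, so the only signs are those carried by the commutation relations, exactly as in the cited characteristic-zero computation of Gow.

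For part (1) I would first unwind $\zeta_{m|n}=\rho_{m|n}\circ\omega_{m|n}$ on generators. Using $\omega_{m|n}(t_{i,j}(u))=t'_{i,j}(-u)$ and the easily checked $\rho_{m|n}(t'_{i,j}(u))=t'_{m+n+1-i,\,m+n+1-j}(-u)$, the two sign flips $u\mapsto-u$ cancel and give $\zeta_{m|n}(t_{i,j}(u))=t'_{m+n+1-i,\,m+n+1-j}(u)$, i.e. the matrix identity $\zeta_{m|n}(T^{(m|n)}(u))=P\,(T^{(n|m)}(u))^{-1}P$ with $P$ the anti-diagonal permutation matrix. Substituting $(T^{(n|m)}(u))^{-1}=E(u)^{-1}D(u)^{-1}F(u)^{-1}$ (factors of $Y_{n|m}$) and conjugating by $P$ turns this product into (lower unitriangular)$\cdot$(diagonal)$\cdot$(upper unitriangular), because conjugation by $P$ interchanges upper and lower triangularity and reverses the index order. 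Uniqueness then yields $\zeta_{m|n}(d_i(u))=d_{m+n+1-i}(u)^{-1}$ and identifies $\zeta_{m|n}(e_j)$, $\zeta_{m|n}(f_j)$ with single off-diagonal entries of $PF(u)^{-1}P$ and $PE(u)^{-1}P$; the minus signs in $-f_{m+n-j}$ and $-e_{m+n-j}$ are precisely the fact that the first sub/super-diagonal entry of the inverse of a unitriangular matrix is the negative of the original entry.

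For part (2) the same philosophy applies, but the clean identity for $\psi_k(T(u))$ is the crux. Writing $\psi_k=\omega_{m+k|n}\circ\varphi_{m|n}\circ\omega_{m|n}$, I would use that $\varphi_{m|n}$ realises $Y_{m|n}$ as the bottom-right $(m+n)\times(m+n)$ corner of $Y_{m+k|n}$ and that $\omega$ is an involutive automorphism to show, step by step, that $\psi_k(T^{(m|n)}(u))$ equals the inverse of the bottom-right $(m+n)$-block of $(T^{(m+k|n)}(u))^{-1}$. By the block-inverse (Schur complement) formula this block-inverse is exactly the Schur complement $D(u)-C(u)A(u)^{-1}B(u)$ of the top-left $k\times k$ block $A(u)$ of $T^{(m+k|n)}(u)$. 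A short block computation with $T^{(m+k|n)}=FDE$ then collapses this Schur complement to $F_\flat(u)D_\flat(u)E_\flat(u)$, the product of the bottom-right $(m+n)$-blocks of $F,D,E$; these being again lower unitriangular, diagonal, and upper unitriangular, uniqueness gives $\psi_k(d_l(u))=d_{k+l}(u)$, $\psi_k(e_l(u))=e_{k+l}(u)$, $\psi_k(f_l(u))=f_{k+l}(u)$.

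Part (3) then falls out: by (2), $\psi_k(Y_{m|n})$ is generated by the entries of that Schur complement, hence lies in the subalgebra generated by $\{t'_{c,d}(u):k<c,d\le k+m+n\}$, whereas $Y_k$ is generated by $\{t_{a,b}(u):1\le a,b\le k\}$. Relation (\ref{tiju tkl' relation}) gives $[t_{a,b}(u),t'_{c,d}(v)]=0$ whenever $a,b\le k<c,d$, since both deltas $\delta_{c,b}$ and $\delta_{a,d}$ vanish; as pairwise supercommuting generators force the two subalgebras to supercommute, this proves (3). I expect the main obstacle to be part (2): pinning down the precise matrix identity for $\psi_k(T(u))$ and checking that the block-inverse and Schur-complement manipulations remain valid over the noncommutative superalgebra (they do, being formal consequences of associativity and of invertibility of the leading blocks). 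The only remaining care is the super-sign bookkeeping, most visibly the minus signs of part (1); but since every identity invoked is defined over $\ZZ$ and (\ref{tiju tkl' relation}) holds verbatim in characteristic $p$, the argument is characteristic-free, matching Gow's proof.
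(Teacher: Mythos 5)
Your proposal is correct and follows the same route the paper relies on: the paper simply cites Gow's characteristic-zero proof (\cite[Proposition 1, Lemma 2]{Gow07}), which is exactly the uniqueness-of-Gauss-decomposition argument you reconstruct — computing $\zeta_{m|n}(T(u))=P(T(u))^{-1}P$ and identifying $\psi_k(T(u))$ with the Schur complement $F_{22}D_{22}E_{22}$, all of which is characteristic-free. Your verification that the block-inverse manipulations and relation (\ref{tiju tkl' relation}) hold verbatim over $\kk$ is precisely the point the paper is implicitly invoking.
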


\subsection{Gauss decomposition of $Y_{m|n}$}
We first consider the case, where $m=2$ and $n=1$.
The following lemma is a generalization and modular analogue of \cite[Lemma 3]{Gow07}.
\begin{Lemma}\label{identities in Y21}
The following identities hold in $Y_{2|1}[[u^{-1},v^{-1},w^{-1}]]$:
\begin{align}\label{identities in Y21-1}
(u-v)[d_i(u),e_j(v)]=\left\{\begin{array}{ll}
			(\delta_{ij}-\delta_{i,j+1})d_i(u)(e_j(v)-e_j(u)),&\mbox{if~}j=1;\\
			(\delta_{ij}+\delta_{i,j+1})d_i(u)(e_j(v)-e_j(u)),&\mbox{if~}j=2;
		\end{array}\right.
\end{align}
\begin{align}\label{identities in Y21-2}
(u-v)[d_i(u),f_j(v)]=\left\{\begin{array}{ll}
			-(\delta_{ij}-\delta_{i,j+1})(f_j(v)-f_j(u))d_i(u),&\mbox{if~}j=1;\\
			-(\delta_{ij}+\delta_{i,j+1})(f_j(v)-f_j(u))d_i(u),&\mbox{if~}j=2;
		\end{array}\right.
\end{align}
\begin{align}\label{identities in Y21-3}
(u-v)[e_j(u), f_k(v)]=(-1)^{|j+1|}
		\delta_{jk}\left(d_j(u)^{-1}d_{j+1}(u)-d_j(v)^{-1}d_{j+1}(v)\right);	
\end{align}
\begin{align}\label{identities in Y21-4}
(u-v)[e_j(u), e_j(v)]=(-1)^{|j+1|}\left(e_j(u)-e_j(v)\right)^2;
\end{align}
\begin{align}\label{identities in Y21-5}
(u-v)[f_j(u), f_j(v)]=-(-1)^{|j+1|}\left(f_j(u)-f_j(v)\right)^2;
\end{align}
\begin{align}\label{identities in Y21-6}
(u-v)\left[e_1(u),e_2(v)\right]=e_1(u)e_2(v)-e_1(v)e_2(v)-e_{1,3}(u)+e_{1,3}(v);
\end{align}
\begin{align}\label{identities in Y21-7}
(u-v)\left[f_1(u),f_2(v)\right]=-f_2(v)f_1(u)+f_2(v)f_1(v)+f_{3,1}(u)-f_{3,1}(v);
\end{align}
\begin{align}\label{identities in Y21-8}
\left[e_{13}(u),e_{2}(v)\right]=e_{2}(v)\left[e_{1}(u),e_2(v)\right];
\end{align}
\begin{align}\label{identities in Y21-9}
\left[e_1(u),e_{13}(v)-e_1(v)e_2(v)\right]=-\left[e_1(u),e_{2}(v)\right]e_1(u);
\end{align}
\begin{align}\label{identities in Y21-10}
\left[\left[e_i(u),e_j(v)\right],e_j(v)\right]=0\quad\mbox{if~}|i-j|=1;
\end{align}
\begin{align}\label{identities in Y21-11}
\left[\left[f_i(u),f_j(v)\right],f_j(v)\right]=0\quad\mbox{if~}|i-j|=1;
\end{align}
\begin{align}\label{identities in Y21-12}
\left[\left[e_i(u),e_j(v)\right],e_j(w)\right]+\left[\left[e_i(u),e_j(w)\right],e_j(v)\right]=0,\quad\mbox{if~}|i-j|=1;
\end{align}
\begin{align}\label{identities in Y21-13}
\left[\left[f_i(u),f_j(v)\right],f_j(w)\right]+\left[\left[f_i(u),f_j(w)\right],f_j(v)\right]=0,\quad\mbox{if~}|i-j|=1.
\end{align}
\end{Lemma}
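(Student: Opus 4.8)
The plan is to derive every identity directly from the series form of the defining relations, namely (\ref{tiju tkl relation}) together with its companion (\ref{tiju tkl' relation}), after substituting the explicit Gauss-decomposition expressions for the $d$'s, $e$'s and $f$'s. Since $m+n=3$, the quasideterminants (\ref{quasideterminants D})--(\ref{quasideterminants F}) are small enough to expand by hand: I would first record $d_1(u)=t_{1,1}(u)$, $e_1(u)=t_{1,1}(u)^{-1}t_{1,2}(u)$, $e_{1,3}(u)=t_{1,1}(u)^{-1}t_{1,3}(u)$, $f_1(u)=t_{2,1}(u)t_{1,1}(u)^{-1}$, $f_{3,1}(u)=t_{3,1}(u)t_{1,1}(u)^{-1}$, together with the order-two quasideterminant expressions for $d_2(u)$, $e_2(v)$ and $f_2(v)$. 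A useful economy is that the inverse-matrix relation (\ref{tiju tkl' relation}) computes commutators with $d_i(u)^{-1}$ directly, which is exactly the shape in which $e_j$ and $f_j$ appear once the prefactor $d_i(u)^{-1}$ is pulled out.

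Before grinding, I would invoke two structural simplifications. First, the swap map $\zeta_{2|1}$ of Proposition \ref{image of zeta and psi_k} sends $e_j\mapsto -f_{3-j}$, $f_j\mapsto -e_{3-j}$ and $d_i\mapsto d_{4-i}(u)^{-1}$, so applying it to an established $e$-identity yields the matching $f$-identity; thus (\ref{identities in Y21-2}), (\ref{identities in Y21-5}), (\ref{identities in Y21-7}), (\ref{identities in Y21-11}) and (\ref{identities in Y21-13}) follow for free from (\ref{identities in Y21-1}), (\ref{identities in Y21-4}), (\ref{identities in Y21-6}), (\ref{identities in Y21-10}) and (\ref{identities in Y21-12}), halving the work. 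Second, the subalgebra generated by the $t_{a,b}(u)$ with $a,b\in\{1,2\}$ is a copy of $Y_2$ and that with $a,b\in\{2,3\}$ a copy of $Y_{1|1}$, so the single-root instances of (\ref{identities in Y21-1}), (\ref{identities in Y21-3}) and (\ref{identities in Y21-4}) reduce to rank-one computations inside these sub-Yangians. Moreover a vanishing commutator such as $[d_1(u),e_2(v)]=0$ is immediate: $d_1\in Y_1$ and $e_2\in\psi_1(Y_{1|1})$, which supercommute by Proposition \ref{image of zeta and psi_k}(3).

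With these reductions I would proceed in the order of the statement: (i) the $d$-$e$ brackets (\ref{identities in Y21-1}), from $[t_{1,1}(u),e_j(v)]$ for $i=1$ and from the order-two quasideterminant for $i=2,3$, tracking the parity sign prescribed by (\ref{tiju tkl relation}); (ii) the $e$-$f$ bracket (\ref{identities in Y21-3}), whose diagonal case produces the telescoping term $d_j(u)^{-1}d_{j+1}(u)-d_j(v)^{-1}d_{j+1}(v)$; (iii) the quadratic self-relation (\ref{identities in Y21-4}); (iv) the mixed relation (\ref{identities in Y21-6}), in which the element $e_{1,3}$ necessarily enters and is controlled by (\ref{induction eij,fij}); and finally (v) the auxiliary identities (\ref{identities in Y21-8})--(\ref{identities in Y21-9}), which feed the super Jacobi identity to yield the Serre relations (\ref{identities in Y21-10}) and (\ref{identities in Y21-12}).

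The hard part will be steps (iii)--(v): the odd index $3$ forces the sign $(-1)^{|j+1|}$ into the quadratic and Serre relations, and each Jacobi manipulation must be carried out with the supercommutator (\ref{associative super is Lie super}) rather than the ordinary bracket, so the sign bookkeeping is unforgiving. I expect the modular aspect itself to cause no trouble, since every manipulation here is a geometric expansion of $(u-v)^{-1}$ combined with ring operations and inverses of series with invertible constant term, none of which divides by an integer; once the signs are correct the characteristic-zero argument of \cite[Lemma 3]{Gow07} transfers essentially verbatim. The genuine care therefore lies in verifying (\ref{identities in Y21-8}) and (\ref{identities in Y21-9}), as these are the linchpins from which the super Serre relations are extracted.
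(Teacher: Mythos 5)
Your overall strategy coincides with the paper's own proof: identities (\ref{identities in Y21-1})--(\ref{identities in Y21-3}) are Gow's computations, which transfer verbatim to characteristic $p$ since no integer is ever inverted; (\ref{identities in Y21-4}) is obtained from the rank-one cases inside the sub-Yangians $Y_2$ and $\psi_1(Y_{1|1})$; (\ref{identities in Y21-6}), (\ref{identities in Y21-8}) and (\ref{identities in Y21-9}) come from (\ref{tiju tkl' relation}) after substituting the Gauss decomposition (e.g.\ from $[t_{1,3}(u),t'_{2,3}(v)]=0$ and $[t_{1,2}(u),t'_{1,3}(v)]=0$); and the Serre-type relations (\ref{identities in Y21-10}), (\ref{identities in Y21-12}) are then extracted exactly as you say, the paper specializing $u=v-1$ in $(u-v+1)[[e_1(u),e_2(v)],e_2(v)]=[[e_1(v),e_2(v)],e_2(v)]$ and dividing by $u-v+1$, and checking symmetry of $(u-v)(u-w)(v-w)[[e_1(u),e_2(v)],e_2(w)]$ in $v,w$ for (\ref{identities in Y21-12}). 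You have also correctly identified the auxiliary identities (\ref{identities in Y21-8})--(\ref{identities in Y21-9}) as the linchpin.

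The one concrete flaw is the claim that the swap map $\zeta_{2|1}$ yields the $f$-identities ``for free.'' By Proposition \ref{proposition auto}(4), $\zeta_{2|1}$ is an isomorphism $Y_{2|1}\to Y_{1|2}$, not an automorphism of $Y_{2|1}$: applying it to an $e$-identity of $Y_{2|1}$ produces a valid identity in $Y_{1|2}$, where the parity sequence --- hence the sign pattern $\delta_{ij}\pm\delta_{i,j+1}$ and the factor $(-1)^{|j+1|}$ --- is different, so it does not directly deliver (\ref{identities in Y21-2}), (\ref{identities in Y21-5}), (\ref{identities in Y21-7}), (\ref{identities in Y21-11}) or (\ref{identities in Y21-13}) as statements in $Y_{2|1}$. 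The correct internal tool is the transposition anti-automorphism $\tau$ of Section \ref{Section Automorphisms}(2), which fixes the $d_i^{(r)}$, exchanges $e_j^{(r)}\leftrightarrow f_j^{(r)}$, and reverses products; note that (\ref{identities in Y21-2}) indeed reads $(f_j(v)-f_j(u))d_i(u)$ with the factors in the opposite order to $d_i(u)(e_j(v)-e_j(u))$ in (\ref{identities in Y21-1}), exactly as an anti-automorphism predicts. Alternatively one can argue as the paper does for (\ref{identities in Y21-5}): use the known $f$--$f$ relation of $Y_2$ for the even root, and for the odd root transport the genuine automorphism $\zeta_{1|1}$ of $Y_{1|1}$ into $Y_{2|1}$ via the shift map $\psi_1$. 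With this substitution your argument goes through and is otherwise the paper's.
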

\begin{proof}
Equations (\ref{identities in Y21-1})-(\ref{identities in Y21-3}) were proved over $\CC$ in \cite[Lemma 3]{Gow07},
the same proof works here.

To establish (\ref{identities in Y21-4}), we first consider the Yangian $Y_2$.
According to \cite[(4.30)]{BT18} (see also \cite[Lemma 5.4(iv)]{BK05}), we have $(u-v)[e_1(u),e_1(v)]=(e_1(u)-e_1(v))^2$.
The standard embedding $Y_2\hookrightarrow Y_{2|1}$ yields (\ref{identities in Y21-6}) for $j=1$.
Using exactly the same degree argument as in \cite[(5.4)]{Peng11},
we see that $(u-v)[e_1(u),e_1(v)]=-(e_1(u)-e_1(v))^2$ holds in the Yangian $Y_{1|1}$.
By applying the shift map $\psi_1:Y_{1|1}\rightarrow Y_{2|1}$ (see (\ref{psi e})),
we obtain (\ref{identities in Y21-4}) for $j=2$.
The relation (\ref{identities in Y21-5}) follows from a consecutive application of \cite[(5.6)]{BK05} and the swap map $\zeta_{1|1}$ to (\ref{identities in Y21-4}) with suitable choices of indices.

For the remaining relations,
we just give a brief account.
This can be proven in the same manner as \cite[Section 6]{Peng11}.
For example, for (\ref{identities in Y21-12}), it suffices to verify that
$$(u-v)(u-w)(v-w)[[e_1(u),e_2(v)],e_2(w)]$$
is symmetric in $v$ and $w$.
Actually, this follows from (\ref{identities in Y21-6})-(\ref{identities in Y21-11}).
To establish (\ref{identities in Y21-6}),
we use (\ref{tiju tkl' relation}) to get
$$(u-v)[t_{1,2}(u),t'_{2,3}(v)]=t_{1,1}(u)t'_{1,3}(v)+t_{1,2}(u)t'_{2,3}(v)+t_{1,3}(u)t'_{3,3}(v).$$
Substituting by the Drinfeld's generators (see for example \cite[Page 807]{Gow07}) in the above identity,
we have
\begin{align*}
[u-v][d_1(u)e_1(u),-e_2(v)d_3(v)^{-1}]
=&d_1(u)(e_1(v)e_2(v)-e_{1,3}(v)
-e_1(u)e_2(v)+e_{1,3}(u))d_3(v)^{-1}.
\end{align*}
Then we multiply both sides on the left by $d_{1}(u)^{-1}$ and on the right by $d_3(v)$ to get (\ref{identities in Y21-6}).
For (\ref{identities in Y21-8}), we have by (\ref{tiju tkl' relation}) and (\ref{identities in Y21-1}) that
$$0=[t_{1,3}(u),-t'_{2,3}(v)]=[d_1(u)e_{1,3}(u),e_2(v)d_3(v)^{-1}]=d_1(u)[e_{1,3}(u),e_2(v)d_3(v)^{-1}].$$
It follows that $[e_{1,3}(u),e_2(v)d_3(v)^{-1}]=0$, this readily yields
$$(\ast)\ \ [e_{1,3}(u),e_2(v)]d_3(v)^{-1}=e_2(v)[e_{1,3}(u),d_3(v)^{-1}].$$
Note that (\ref{identities in Y21-1}) gives the following identity:
$$(u-v)[d_3(v)^{-1},e_2(u)]=(e_2(u)-e_2(v))d_3(v)^{-1}.$$
Taking the coefficient of $u^0$ and using the Super-Jacobi identity, we have
$$[e_{1,3}(u),d_3(v)^{-1}]=[e_1{u},e_2(v)]d_3(v)^{-1}.$$
Now substituting this into $(\ast)$ gives (\ref{identities in Y21-8}).
The proof of (\ref{identities in Y21-9}) is similar to (\ref{identities in Y21-8}),
and we need to consider the relation $[t_{1,2}(u),t'_{1,3}(v)]=0$ using (\ref{tiju tkl' relation}).
For (\ref{identities in Y21-10}) in the case $i=1,j=2$, we have by (\ref{identities in Y21-6}) and (\ref{identities in Y21-8}) to see that
$$(u-v+1)[[e_1(u),e_2(v)],e_2(v)]=[[e_1(v),e_2(v)],e_2(v)].$$
Now let $u=v-1$ to deduce that the right-hand side is zero, then divide by $(u-v+1)$ to complete the proof.
\end{proof}

For general $m,n$, the relations among $d's, e's$ and $f's$ are given in the following lemma,
which is a generalization and modular analogue of \cite[Lemma 4]{Gow07}.
\begin{Lemma}\label{identities in Ymn}
The following relations hold in $Y_{m|n}[[u^{-1},v^{-1},w^{-1}]]$:
\begin{align}\label{identities in Ymn-1}
[d_i(u),d_j(v)]=0 \quad\mbox{for~all}~1\leq i,j\leq m+n;
\end{align}
\begin{align}\label{identities in Ymn-2}
[e_i(u),e_j(v)]=0=[f_i(u),f_j(v)]\quad\mbox{if~}|i-j|>1;
\end{align}
\begin{align}\label{identities in Ymn-3}
(u-v)[d_i(u),e_j(v)]=(-1)^{|i|}((\delta_{ij}-\delta_{i,j+1})d_i(u)(e_j(v)-e_j(u)));
\end{align}
\begin{align}\label{identities in Ymn-4}
(u-v)[d_i(u),f_j(v)]=(-1)^{|i|}(-\delta_{ij}+\delta_{i,j+1})(f_j(v)-f_j(u))d_i(u);
\end{align}
\begin{align}\label{identities in Ymn-5}
(u-v)[e_i(u), f_j(v)]=(-1)^{|j+1|}
		\delta_{ij}\left(d_i(u)^{-1}d_{i+1}(u)-d_i(v)^{-1}d_{i+1}(v)\right);	
\end{align}
\begin{align}\label{identities in Ymn-6}
(u-v)[e_j(u), e_j(v)]=(-1)^{|j+1|}\left(e_j(u)-e_j(v)\right)^2;
\end{align}
\begin{align}\label{identities in Ymn-7}
(u-v)[f_j(u), f_j(v)]=-(-1)^{|j+1|}\left(f_j(u)-f_j(v)\right)^2;
\end{align}
\begin{align}\label{identities in Ymn-8}
(u-v)[e_j(u), e_{j+1}(v)]=(-1)^{|j+1|}\left(e_j(u)e_{j+1}(v)-e_j(v)e_{j+1}(v)
-e_{j,j+2}(u)+e_{j,j+2}(v)\right);
\end{align}
\begin{align}\label{identities in Ymn-9}
(u-v)[f_j(u), f_{j+1}(v)]=-(-1)^{|j+1|}\left(f_{j+1}(v)f_{j}(u)-f_{j+1}(v)f_{j}(v)
-f_{j+2,j}(u)+f_{j+2,j}(v)\right);
\end{align}
\begin{align}\label{identities in Ymn-10}
\left[\left[e_i(u),e_j(v)\right],e_j(v)\right]=0\quad\mbox{if~}|i-j|=1;
\end{align}
\begin{align}\label{identities in Ymn-11}
\left[\left[f_i(u),f_j(v)\right],f_j(v)\right]=0\quad\mbox{if~}|i-j|=1;
\end{align}
\begin{align}\label{identities in Ymn-12}
[[e_i(u),e_j(v)],e_j(w)]+[[e_i(u),e_j(w)],e_j(v)]=0,\quad\mbox{if~}|i-j|=1;
\end{align}
\begin{align}\label{identities in Ymn-13}
[[f_i(u),f_j(v)],f_j(w)]+[[f_i(u),f_j(w)],f_j(v)]=0,\quad\mbox{if~}|i-j|=1;
\end{align}
\begin{align}\label{identities in Ymn-14}
[[e_{i-1}^{(r)},e_i^{(1)}],[e_i^{(1)},e_{i+1}^{(s)}]]=0;
\end{align}
\begin{align}\label{identities in Ymn-15}
[[f_{i-1}^{(r)},f_i^{(1)}],[f_i^{(1)},f_{i+1}^{(s)}]]=0.
\end{align}
\end{Lemma}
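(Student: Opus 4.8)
The plan is to bootstrap every identity from the low-rank computations of Lemma \ref{identities in Y21} and the characteristic-$p$ relations for $Y_n$ in \cite{BT18}, transporting them through the structural maps of Propositions \ref{proposition auto} and \ref{image of zeta and psi_k}. The organizing principle is \emph{locality}: each relation in the statement involves only the Drinfeld generators $d_i,e_j,f_j$ whose indices lie in one contiguous window, of length at most four (the longest windows occur in \ref{identities in Ymn-14} and \ref{identities in Ymn-15}, spanning the positions $i-1,i,i+1,i+2$). Since the shift map $\psi_k$ sends $d_l(u),e_l(u),f_l(u)$ to $d_{k+l}(u),e_{k+l}(u),f_{k+l}(u)$ and moves the even/odd transition (which sits at position $m$) coherently, a suitable composite of the shift maps $\psi_k$ and the swap/reflection maps $\zeta_{m|n},\rho_{m|n}$ identifies the subsuperalgebra generated by any such window with the full set of Drinfeld generators of a lower-rank super Yangian whose parity sequence matches that of the window. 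It therefore suffices to verify each relation in the smallest super Yangian carrying the relevant parity pattern.

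First I would clear the commutativity statements. The far-apart cases of \ref{identities in Ymn-2}, where $e_i$ occupies positions $\{i,i+1\}$ and $e_j$ positions $\{j,j+1\}$ with these blocks separated, follow from Proposition \ref{image of zeta and psi_k}(3): choosing $k$ in the gap places one generator in the corner $Y_k$ and the other in the supercommuting image $\psi_k(Y_{m|n})$, so their supercommutator vanishes; the same argument disposes of the $f$-version and of \ref{identities in Ymn-1} for $|i-j|>1$. The remaining adjacent case $[d_i(u),d_{i\pm1}(v)]=0$ of \ref{identities in Ymn-1}, and $[d_i(u),d_i(v)]=0$, reduce to the corresponding rank-$2$ identities (and their images under $\rho_{m|n},\zeta_{m|n}$), which are among those already recorded in \cite{BT18} and Lemma \ref{identities in Y21}.

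For the local relations \ref{identities in Ymn-3}--\ref{identities in Ymn-13} I would separate three cases by the parity pattern of the window. When the window lies entirely in the even block the relation is the $Y_r$ relation of \cite{BT18} pulled into $Y_{m|n}$; when it lies entirely in the odd block I would transport that even-block identity through the isomorphism $\rho_{m|n}$ of Proposition \ref{proposition auto}(1). When the window straddles the transition at position $m$, the relation is precisely one of \ref{identities in Y21-1}--\ref{identities in Y21-13} (for the pattern $0,0,1$) or its image under the swap map. Here Proposition \ref{image of zeta and psi_k}(1) supplies the exact rules $\zeta_{m|n}(d_i)=d_{m+n+1-i}^{-1}$, $\zeta_{m|n}(e_j)=-f_{m+n-j}$, $\zeta_{m|n}(f_j)=-e_{m+n-j}$; applying $\zeta_{m|n}$ to \ref{identities in Ymn-6} and \ref{identities in Ymn-8} yields \ref{identities in Ymn-7} and \ref{identities in Ymn-9}, and applying it to \ref{identities in Ymn-10}, \ref{identities in Ymn-12} yields \ref{identities in Ymn-11}, \ref{identities in Ymn-13}, so that only the $e$-side of each pair must be proved from scratch.

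I expect the main obstacle to be twofold. First, the quartic relations \ref{identities in Ymn-14} and \ref{identities in Ymn-15} genuinely exceed the quadratic data and are exactly the extra relations absent from \cite{Gow07}; I would derive them from the three-node configuration $(i-1,i,i+1)$ by the spectral-symmetry/Jacobi bootstrap already used for \ref{identities in Y21-12} and \ref{identities in Y21-13}, combining the adjacent relation \ref{identities in Ymn-8} with the Serre relations \ref{identities in Ymn-10}, \ref{identities in Ymn-12} and the commutativity \ref{identities in Ymn-2}, after reducing to the minimal straddling rank (so $Y_{2|2}$ at the odd node $i=m$) and specializing the spectral parameters as in the final step of the proof of Lemma \ref{identities in Y21}. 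Second, and pervasively, the sign factors $(-1)^{|i|}$ and $(-1)^{|j+1|}$ must be matched across every application of $\rho_{m|n},\omega_{m|n},\zeta_{m|n}$ and $\psi_k$; because $\zeta_{m|n}$ inserts a sign on each $e$ and $f$, inverts each $d$, and reindexes parities (this is exactly what turns the $+$ of \ref{identities in Ymn-6} into the $-$ of \ref{identities in Ymn-7}), keeping these signs consistent with the parities of the shifted indices is the delicate point to be checked case by case.
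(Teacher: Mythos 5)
Your reduction strategy for (\ref{identities in Ymn-1})--(\ref{identities in Ymn-13}) is exactly the paper's: the far-apart commutativity comes from the supercommuting pair $Y_k$, $\psi_k(Y_{m|n})$ of Proposition \ref{image of zeta and psi_k}(3), and each local relation is transported by a composite of shift and swap maps to either the purely even case (handled by \cite[Theorem 4.3]{BT18} via the embedding $Y_r\hookrightarrow Y_{m|n}$) or to the straddling low-rank case recorded in Lemma \ref{identities in Y21}, with the swap map and the anti-automorphism $\tau$ converting $e$-relations into $f$-relations. That part of your plan is sound and matches the paper's proof.

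The gap is in your treatment of the quartic Serre relations (\ref{identities in Ymn-14})--(\ref{identities in Ymn-15}). The ``Jacobi bootstrap'' you invoke is, in the paper, the super-Jacobi manipulation using the coefficient forms of (\ref{identities in Ymn-10}), (\ref{identities in Ymn-12}) and the commutativity (\ref{identities in Ymn-2}); for $i\neq m$ it shows only that
$[[e_{i-1}^{(r)},e_i^{(1)}],[e_i^{(1)},e_{i+1}^{(s)}]]$ equals its own negative, i.e.\ $2X=0$. This proves nothing when $\Char\kk=2$, and your proposal gives no substitute there: the paper handles $p=2$ by observing $Y_{m|n}\cong Y_{m+n}$ and rerunning Gow's direct computation (\cite[Lemma 5]{Gow07}) through a composite shift/swap map into the relevant window. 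Moreover, at the odd node $i=m$ the sign in the super-Jacobi identity does not produce the cancellation at all, so even for $p\neq 2$ one needs Gow's direct argument rather than the bootstrap; your plan to ``reduce to $Y_{2|2}$ and specialize spectral parameters as in the final step of the proof of Lemma \ref{identities in Y21}'' is not adequate as stated, since that final step proves the \emph{cubic} relation (\ref{identities in Y21-10}) by setting $u=v-1$, and no analogous specialization is exhibited for the quartic identity. A small factual slip as well: the quartic Serre relations are not ``absent from \cite{Gow07}''; they appear there, and what is new in positive characteristic is that they must be imposed for all admissible $i$ (not only $i=m$) and that the cubic relations must be split as in (\ref{identities in Ymn-10})--(\ref{identities in Ymn-13}), precisely because the $2X=0$ argument is unavailable when $p=2$.
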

\begin{proof}
The relations (\ref{identities in Ymn-1}) and (\ref{identities in Ymn-2}) follow directly from Proposition \ref{image of zeta and psi_k}(3).

For the relations (\ref{identities in Ymn-3})-(\ref{identities in Ymn-13}), we use Lemma \ref{identities in Y21}, \cite[Theorem 4.3]{BT18},
together with the shift maps and swap maps.
We just go through (\ref{identities in Ymn-10}) in the case $j=i+1$, i.e.,
$$\left[\left[e_i(u),e_{i+1}(v)\right],e_{i+1}(v)\right]=0,$$
since the others are similar.
There are four cases:
(a) $1\leq i\leq m-2$;
(b) $i=m-1$;
(c) $i=m$;
(d) $m+1\leq i\leq m+n-2$.
For these, (a) and (d) are immediate from \cite[(4.25)]{BT18}.
We consider the composite map:
$$\eta:Y_{2|1}\stackrel{\psi_{m-2}}{\longrightarrow}Y_{m|1}\stackrel{\zeta_{m|1}}{\longrightarrow}Y_{1|m}\stackrel{\psi_{n-1}}{\longrightarrow}Y_{n|m}\stackrel{\zeta_{n|m}}{\longrightarrow}Y_{m|n}.$$
Now Proposition \ref{image of zeta and psi_k} implies $\eta(e_1)=e_{m-1}$ and $\eta(e_2)=e_{m}$, so that (\ref{identities in Y21-10}) yields (b).
By using (\ref{identities in Y21-11}) instead of (\ref{identities in Y21-10}), (c) is a similar argument to (b).

The proof of (\ref{identities in Ymn-15}) is similar to (\ref{identities in Ymn-14}),
so we just prove (\ref{identities in Ymn-14}).
Taking its coefficient of $u^{-r}v^{-s}w^{-t}$ in (\ref{identities in Ymn-12}), we have
\begin{align}\label{ymn-12 coefficients}
\left[[e_i^{(r)},e_j^{(s)}],e_j^{(t)}\right]+\left[[e_i^{(r)},e_j^{(t)}],e_j^{(s)}\right]=0,\quad\mbox{if~}|i-j|=1.
\end{align}
Then taking the $u^{-r}v^{-2t}$-coefficient in (\ref{identities in Ymn-10}) in conjunction with (\ref{ymn-12 coefficients}) gives
\begin{align}\label{ymn-10 coefficients}
\left[[e_i^{(r)},e_j^{(t)}],e_j^{(t)}\right]=0\quad\mbox{if~}|i-j|=1.
\end{align}
To show (\ref{identities in Ymn-14}), we consider the following three cases:
(i) $1\leq i\leq m-1$;
(ii) $i=m$;
(iii) $m+1\leq i\leq m+n-2$.
As before, (iii) is a similar argument to (i), so we just prove (i) and (ii).
Suppose that $1\leq i\leq m-1$. By (\ref{ymn-10 coefficients}), (\ref{identities in Ymn-2}) and super-Jacobi identity, we have:
\begin{eqnarray*}
[[e_{i-1}^{(r)},e_i^{(1)}],[e_i^{(1)},e_{i+1}^{(s)}]]&=&[e_{i}^{(1)},[[e_{i-1}^{(r)},e_i^{(1)}],e_{i+1}^{(s)}]]=[e_{i}^{(1)},[e_{i+1}^{(s)},[e_i^{(1)},e_{i-1}^{(r)}]]\\
&=&[e_{i}^{(1)},[[e_{i+1}^{(s)},e_i^{(1)}],e_{i-1}^{(r)}]]=[[e_{i+1}^{(s)},e_i^{(1)}],[e_{i}^{(1)},e_{i-1}^{(r)}]]\\
&=&-[[e_{i-1}^{(r)},e_i^{(1)}],[e_i^{(1)},e_{i+1}^{(s)}]].
\end{eqnarray*}
Hence we have proved (i) in case $\Char\kk=p\neq 2$.
The proof of (ii) is the same argument as in the proof of \cite[Lemma 5]{Gow07}.
Now we assume that $p=2$ and consider the composite map
$$\xi:Y_{2|2}\stackrel{\psi_{i-2}}{\longrightarrow}Y_{i|2}\stackrel{\zeta_{i|2}}{\longrightarrow}Y_{2|i}\stackrel{\psi_{m+n-i-2}}{\longrightarrow}Y_{m+n-i|i}\stackrel{\zeta_{m+n-i|i}}{\longrightarrow}Y_{i|m+n-i}.$$
Note that $Y_{i|m+n-i}=Y_{m|n}=Y_{m+n}$.
Then Gow's argument (\cite[Lemma 5]{Gow07}) in conjunction with the map $\xi$ yields (i) for $p=2$.
\end{proof}
\begin{Remark}\label{rk2}
(1) Note that (\ref{identities in Ymn-6}) and (\ref{identities in Ymn-7}) are different from the ones in \cite[(30)-(31)]{Gow07}.
Suppose that $p\neq 2$.
We may switch $u$ and $v$.
Since both $e_m(u)$ and $e_m(v)$ are odd, this forces $(u-v)[e_m(u),e_m(v)]=0$.
If $p=2$, then $Y_{m|n}\cong Y_{m+n}$, so that $(u-v)[e_m(u),e_m(v)]=-(e_m(u)-e_m(v))^2=(e_m(u)-e_m(v))^2$ (cf. \cite[(4.30)]{BT18}).

(2) For the {\it quartic Serre relations} (\ref{identities in Ymn-14}, \ref{identities in Ymn-15}),
we need consider all admissible $i$.
If $p\neq 2$, then the proof of Lemma \ref{identities in Ymn} implies that the quartic Serre relations
for $j\neq m$ already follow from the quadratic and cubic relations,
however we cannot derive the quartic Serre relations from others in the case $p=2$ (cf. \cite[Theorem 4.3]{BT18}).
\end{Remark}

\subsection{Drinfeld-type presentation}
Recall that the fact in Section \ref{subsection Gauss decomp} that $Y_{m|n}$ is generated as an algebra by the set $\{d_i^{(r)},d_i'^{(r)},e_j^{(r)},f_j^{(r)}\}$.
The following theorem describes the relations among these generators, and
the relations  are enough as defining relations of the super Yangian $Y_{m|n}$ (cf. \cite[Theorem 3]{Gow07}).

\begin{Theorem}\label{theorem Drinfeld presentation}
The Yangian $Y_{m|n}$ is generated by the elements $\{d_i^{(r)},d_i'^{(r)};~1\leq i\leq m+n, r\geq 1\}$ and $\{e_j^{(r)},f_j^{(r)};~1\leq j\leq m+n-1, r\geq 1\}$ subject only to the following relations:
\begin{align}\label{di di' relation}
d_i^{(0)}=1,~\sum\limits_{t=0}^rd_i^{(t)}d_i'^{(r-t)}=\delta_{r0};
\end{align}
\begin{align}\label{di dj commu}
[d_i^{(r)},d_j^{(s)}]=0;
\end{align}
\begin{align}\label{Drinfeld generators relation 1}
[d_i^{(r)},e_j^{(s)}]=(-1)^{|i|}(\delta_{ij}-\delta_{i,j+1})\sum\limits_{t=0}^{r-1}d_i^{(t)}e_j^{(r+s-1-t)};
\end{align}
\begin{align}\label{Drinfeld generators relation 2}
[d_i^{(r)},f_j^{(s)}]=-(-1)^{|i|}(\delta_{ij}-\delta_{i,j+1})\sum\limits_{t=0}^{r-1}f_j^{(r+s-1-t)}d_i^{(t)};
\end{align}
\begin{align}\label{Drinfeld generators relation 3}
[e_i^{(r)},f_j^{(s)}]=-(-1)^{|i+1|}\delta_{ij}\sum\limits_{t=0}^{r+s-1}d_i'^{(t)}d_{i+1}^{(r+s-1-t)};
\end{align}
\begin{align}\label{Drinfeld generators relation 4}
[e_j^{(r)},e_j^{(s)}]=(-1)^{|j+1|}(\sum\limits_{t=1}^{s-1}e_j^{(t)}e_j^{r+s-1-t}-\sum\limits_{t=1}^{r-1}e_j^{(t)}e_j^{(r+s-1-t)});
\end{align}
\begin{align}\label{Drinfeld generators relation 5}
[f_j^{(r)},f_j^{(s)}]=(-1)^{|j+1|}(\sum\limits_{t=1}^{r-1}f_j^{(t)}f_j^{r+s-1-t}-\sum\limits_{t=1}^{s-1}f_j^{(t)}f_j^{(r+s-1-t)});
\end{align}
\begin{align}\label{Drinfeld generators relation 6}
[e_j^{(r+1)},e_{j+1}^{(s)}]-[e_j^{(r)},e_{j+1}^{(s+1)}]=(-1)^{|j+1|}e_j^{(r)}e_{j+1}^{(s)};
\end{align}
\begin{align}\label{Drinfeld generators relation 7}
[f_j^{(r+1)},f_{j+1}^{(s)}]-[f_j^{(r)},f_{j+1}^{(s+1)}]=-(-1)^{|j+1|}f_{j+1}^{(s)}f_{j}^{(r)};
\end{align}
\begin{align}\label{Drinfeld generators relation 0}
[e_i^{(r)},e_j^{(s)}]=0=[f_i^{(r)},f_j^{(s)}]\quad\mbox{if~}|i-j|>1;
\end{align}
\begin{align}\label{Drinfeld generators relation 8}
[[e_i^{(r)},e_j^{(s)}],e_j^{(t)}]+[[e_i^{(r)},e_j^{(t)}],e_j^{(s)}]=0,\quad\mbox{if~}|i-j|=1;
\end{align}
\begin{align}\label{Drinfeld generators relation 9}
[[f_i^{(r)},f_j^{(s)}],f_j^{(t)}]+[[f_i^{(r)},f_j^{(t)}],f_j^{(s)}]=0,\quad\mbox{if~}|i-j|=1;
\end{align}
\begin{align}\label{Drinfeld generators relation 10}
[[e_i^{(r)},e_j^{(t)}],e_j^{(t)}]=0\quad\mbox{if~}|i-j|=1;
\end{align}
\begin{align}\label{Drinfeld generators relation 11}
[[f_i^{(r)},f_j^{(t)}],f_j^{(t)}]=0,\quad\mbox{if~}|i-j|=1;
\end{align}
\begin{align}\label{Drinfeld generators relation 12}
[[e_{i-1}^{(r)},e_i^{(1)}],[e_i^{(1)},e_{i+1}^{(s)}]]=0;
\end{align}
\begin{align}\label{Drinfeld generators relation 13}
[[f_{i-1}^{(r)},f_i^{(1)}],[f_i^{(1)},f_{i+1}^{(s)}]]=0.
\end{align}
\end{Theorem}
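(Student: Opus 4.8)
The plan is to follow the now-standard strategy for establishing Drinfeld-type presentations (\cite[Theorem 3]{Gow07}, \cite{BT18}): build an abstract algebra on the given generators and relations, map it onto $Y_{m|n}$, and prove the map is injective by a filtered PBW argument. Let $\hat Y$ denote the associative $\kk$-superalgebra generated by symbols $\{d_i^{(r)},d_i'^{(r)},e_j^{(r)},f_j^{(r)}\}$ with the indicated parities, subject only to relations \eqref{di di' relation}--\eqref{Drinfeld generators relation 13}. First I would check that the like-named Drinfeld generators of $Y_{m|n}$ satisfy all of these relations: each of \eqref{di dj commu}--\eqref{Drinfeld generators relation 13} is obtained by extracting the coefficient of an appropriate monomial in $u^{-1},v^{-1},w^{-1}$ from the generating-series identities of Lemma \ref{identities in Ymn}, while \eqref{di di' relation} merely records that $d_i'(u)=d_i(u)^{-1}$. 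Combined with Theorem \ref{Ymn generated by fde}, which guarantees that these elements generate $Y_{m|n}$, this produces a surjective homomorphism $\Phi\colon\hat Y\twoheadrightarrow Y_{m|n}$.

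Next I would transport the loop filtration to $\hat Y$ by declaring $\deg d_i^{(r)}=\deg e_j^{(r)}=\deg f_j^{(r)}=r-1$, so that $\Phi$ becomes filtered and induces a surjection $\gr\Phi\colon\gr\hat Y\twoheadrightarrow\gr Y_{m|n}$. By Lemma \ref{lemma U(g) isomorphic to graded of loop filtration} the target is identified with $U(\fg)$, and it suffices to prove $\gr\Phi$ injective. The goal is therefore to bound $\gr\hat Y$ \emph{from above} by $U(\fg)$: I would construct inside $\gr\hat Y$ elements playing the role of the matrix units $e_{i,j}x^r$ and show they obey the current-superalgebra bracket \eqref{Lie bracket of g} modulo lower filtration. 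The diagonal and simple-root symbols are the leading symbols of $d_i^{(r)},e_j^{(r)},f_j^{(r)}$; the higher off-diagonal ones are defined by iterated supercommutators modelled on \eqref{induction eij,fij}. Using \eqref{Drinfeld generators relation 1}--\eqref{Drinfeld generators relation 0} one computes the leading symbols of all such brackets and verifies that they reproduce \eqref{Lie bracket of g} with the correct parity signs.

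These computations would yield a surjective superalgebra homomorphism $U(\fg)\twoheadrightarrow\gr\hat Y$, and by construction the composite $U(\fg)\twoheadrightarrow\gr\hat Y\xrightarrow{\gr\Phi}U(\fg)$ acts as the identity on the generators $e_{i,j}x^r$, hence is the identity of $U(\fg)$. It follows that both maps are isomorphisms; in particular $\gr\Phi$ is injective, whence $\Phi$ is an isomorphism and \eqref{di di' relation}--\eqref{Drinfeld generators relation 13} form a complete set of defining relations. Equivalently, the last step may be phrased as a PBW count: the relations are enough to straighten any word in the generators into an ordered supermonomial, and $\gr\Phi$ carries these spanning supermonomials bijectively onto the PBW basis of $U(\fg)\cong\gr Y_{m|n}$ furnished by Theorem \ref{PBW theorem Ymn}.

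The main obstacle is precisely this upper bound on $\gr\hat Y$: verifying that relations \eqref{di di' relation}--\eqref{Drinfeld generators relation 13} already force the higher commutators to collapse to their expected leading terms, so that no extra generators or relations survive. The delicate point is characteristic $2$. As noted in Remark \ref{rk2}, when $p=2$ the quartic Serre relations \eqref{Drinfeld generators relation 12}--\eqref{Drinfeld generators relation 13} are no longer consequences of the quadratic and cubic relations, and the odd simple-root relations \eqref{Drinfeld generators relation 4}--\eqref{Drinfeld generators relation 7} degenerate; both must be invoked explicitly to pin down the nilpotency of the root vectors and to control the straightening of supermonomials involving the odd index $j=m$. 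Tracking the parity signs throughout the bracket computations, and ensuring that odd generators enter the ordered supermonomials only to the first power, is the bookkeeping that makes the argument genuinely more involved than its characteristic-zero counterpart in \cite{Gow07}.
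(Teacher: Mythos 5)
Your proposal is correct and follows essentially the same route as the paper: verify the relations in $Y_{m|n}$ via Lemma \ref{identities in Ymn} and coefficient extraction to get a surjection $\theta\colon\widehat{Y}_{m|n}\twoheadrightarrow Y_{m|n}$, define the higher root vectors by the inductive commutator formulas \eqref{induction eij,fij}, and prove injectivity by passing to the associated graded algebra and checking that the leading symbols satisfy the current-superalgebra bracket \eqref{Lie bracket of g}, which is exactly the content of Lemma \ref{pre lemma} and Proposition \ref{theta is injective} (where the paper isolates the four problematic brackets \eqref{diff 1}--\eqref{diff 4} requiring the cubic and quartic Serre relations). Your flagged obstacles — the characteristic-$2$ degeneration of the odd-root and Serre relations — are precisely the points the paper addresses in Remark \ref{rk2}.
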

\begin{Remark}\label{rk4}
(1) There are several differences between Theorem \ref{theorem Drinfeld presentation} and \cite[Theorem 3]{Gow07}.
The relations (41)-(43) of {\it loc. cit.} are expressed here as the two relations (\ref{Drinfeld generators relation 4})-(\ref{Drinfeld generators relation 5}).
Also relations (47)-(48) of {\it loc. cit.} are expressed here as the four relations (\ref{Drinfeld generators relation 8})-(\ref{Drinfeld generators relation 11}).
This is essential in characteristic $2$ (see Remark \ref{rk2}).

(2) We note that the relations (39), (44)-(45) in \cite[Theorem 3]{Gow07} contain some typos.

(3) Relations (\ref{Drinfeld generators relation 4}) and (\ref{Drinfeld generators relation 5}) are equivalent to the following relations:
\begin{align*}
[e_j^{(r+1)},e_j^{(s)}]-[e_j^{(r)},e_j^{(s+1)}]=-(-1)^{|j+1|}(e_j^{(s)}e_j^{(r)}+e_j^{(r)}e_j^{(s)}),
\end{align*}
\begin{align*}
[f_j^{(r+1)},f_j^{(s)}]-[f_j^{(r)},f_j^{(s+1)}]=(-1)^{|j+1|}(f_j^{(s)}f_j^{(r)}+f_j^{(r)}f_j^{(s)}).
\end{align*}
\end{Remark}

\begin{proof}
We have before proved (\ref{Drinfeld generators relation 10})-(\ref{Drinfeld generators relation 12}) (see (\ref{identities in Ymn-14})-(\ref{identities in Ymn-15}) and (\ref{ymn-10 coefficients})) and (\ref{Drinfeld generators relation 13}) is a similar argument to (\ref{Drinfeld generators relation 12}),
while the others come from
Lemma \ref{identities in Ymn} and the identity
\begin{align}\label{gv-gu/u-v}
\frac{g(v)-g(u)}{u-v}=\sum\limits_{r,s\geq 1}g^{(r+s-1)}u^{-r}v^{-s}
\end{align}
for any formal series $g(u)=\sum_{r\geq 0}g^{(r)}u^{-r}$.

Now we consider the second part of the proof.
Let $\widehat{Y}_{m|n}$ be the algebra generated by elements and relations as in the theorem.
We may further define all the other $e_{i,j}^{(r)}$ and $f_{j,i}^{(r)}$ in $\widehat{Y}_{m|n}$ by setting $e_{i,i+1}^{(r)}:=e_i^{(r)}$ and $f_{j+1,j}^{(r)}:=f_j^{(r)}$,
then using the formula (\ref{induction eij,fij}) inductively when $|i-j|>1$.
Let $\theta:\widehat{Y}_{m|n}\rightarrow Y_{m|n}$ be the map sending every element in $\widehat{Y}_{m|n}$ into the element in $Y_{m|n}$ with the same name.
The previous paragraph implies that $\theta$ is a well-defined surjective homomorphism.
Therefore, it remains to prove that $\theta$ is also injective.
The injectivity will be proved in Proposition \ref{theta is injective}.
\end{proof}

Let $\widehat{Y}^+_{m|n}$ denote the subalgebra of $\widehat{Y}_{m|n}$ generated by the elements $\{e_{i,j}^{(r)};~1\leq i<j\leq m+n,r>0\}$.
Define a filtration on $\widehat{Y}^+_{m|n}$ by declaring that the elements $e_{i,j}^{(r)}$ are of filtered degree $r-1$,
and denote by ${\rm gr} \widehat{Y}^+_{m|n}$ the corresponding graded algebra.
Let $\bar{e}_{i,j}^{(r)}:=\gr_{r-1}e_{i,j}^{(r)}$ be the image of $e_{i,j}^{(r)}$ in the graded algebra ${\rm gr}_{r-1}\widehat{Y}^+_{m|n}$.

The following equations were proven over $\CC$ in \cite[(53)-(55)]{Gow07},
Using (\ref{Drinfeld generators relation 6}), (\ref{Drinfeld generators relation 8}) and (\ref{induction eij,fij}),
this can be obtained by exactly the same proof.
\begin{Lemma}\label{pre lemma}
The following identities hold in ${\rm gr}\widehat{Y}^+_{m|n}$:
\begin{align}
[\bar{e}_{i,i+1}^{(r)},\bar{e}_{k,k+1}^{(s)}]&=0,~{\rm if}~|i-k|\neq 1\label{pre 1},\\
[\bar{e}_{i,i+1}^{(r+1)},\bar{e}_{k,k+1}^{(s)}]&=[\bar{e}_{i,i+1}^{(r)},\bar{e}_{k,k+1}^{(s+1)}],~{\rm if}~|i-k|\neq 1\label{pre 2},\\
[\bar{e}_{i,i+1}^{(r)},[\bar{e}_{i,i+1}^{(s)},\bar{e}_{k,k+1}^{(t)}]]&=-[\bar{e}_{i,i+1}^{(s)},[\bar{e}_{i,i+1}^{(r)},\bar{e}_{k,k+1}^{(t)}]]~{\rm if}~|i-k|\neq 1\label{pre 3},\\
\bar{e}_{i,j}^{(r)}=(-1)^{|j-1|}[\bar{e}_{i,j-1}^{(r)},\bar{e}_{j-1,j}^{(1)}]&=(-1)^{|i+1|}[\bar{e}_{i,i+1}^{(1)},\bar{e}_{i+1,j}^{(r)}],~{\rm for}~j>i+1\label{pre 4}.
\end{align}
\end{Lemma}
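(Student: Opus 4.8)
The plan is to read off each identity from the corresponding Drinfeld relation of Theorem~\ref{theorem Drinfeld presentation} by a filtered-degree comparison, exactly as over $\CC$ in \cite[(53)-(55)]{Gow07}. Recall that $e_j^{(r)}$, and hence each $e_{i,j}^{(r)}$, carries filtered degree $r-1$, so a commutator $[e_i^{(r)},e_k^{(s)}]$ lies in filtered degree at most $(r-1)+(s-1)=r+s-2$, while $\bar{e}_{i,i+1}^{(r)}=\gr_{r-1}e_i^{(r)}$. The mechanism throughout is the same: whenever a defining relation expresses such a top-degree commutator as a sum of terms of \emph{strictly} smaller filtered degree, those terms die in $\gr\widehat{Y}^+_{m|n}$ and the commutator descends to $0$ (or to another top-degree commutator).

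First I would treat (\ref{pre 1}). For $|i-k|>1$ it is immediate from (\ref{Drinfeld generators relation 0}). For $i=k$ I would invoke (\ref{Drinfeld generators relation 4}): its right-hand side is a sum of products $e_j^{(t)}e_j^{(r+s-1-t)}$ of filtered degree $r+s-3$, two below the degree $r+s-2$ of the left-hand commutator, so $[\bar{e}_{i,i+1}^{(r)},\bar{e}_{i,i+1}^{(s)}]=0$ in $\gr\widehat{Y}^+_{m|n}$. For the straightening identity (\ref{pre 2}) the content is at adjacent indices (for $|i-k|\neq 1$ both sides vanish by (\ref{pre 1})); here I would use (\ref{Drinfeld generators relation 6}), in which the two commutators on the left both have filtered degree $r+s-1$ whereas the product $e_j^{(r)}e_{j+1}^{(s)}$ on the right has degree $r+s-2$, so their top-degree parts coincide and give $[\bar{e}_{j,j+1}^{(r+1)},\bar{e}_{j+1,j+2}^{(s)}]=[\bar{e}_{j,j+1}^{(r)},\bar{e}_{j+1,j+2}^{(s+1)}]$; the remaining adjacent case $k=i-1$ follows by antisymmetry of the bracket.

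For the graded Serre identity (\ref{pre 3}), again with the essential content at $|i-k|=1$, I would start from the cubic relation (\ref{Drinfeld generators relation 8}) applied with the repeated index taken to be $i$ (legitimate since $|k-i|=1$); its right-hand side is already $0$, so it descends verbatim to $\gr\widehat{Y}^+_{m|n}$, and a single application of the super-Jacobi identity in the graded algebra rearranges it into the stated symmetric form. Finally, in (\ref{pre 4}) the first equality is just the inductive definition (\ref{induction eij,fij}) pushed to the graded level: bracketing with the degree-$0$ element $\bar{e}_{j-1,j}^{(1)}$ preserves filtered degree $r-1$, so $\bar{e}_{i,j}^{(r)}=(-1)^{|j-1|}[\bar{e}_{i,j-1}^{(r)},\bar{e}_{j-1,j}^{(1)}]$ is immediate.

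The second equality of (\ref{pre 4}) — that the right-anchored iterated bracket equals the left-anchored one — is the real work, and I expect it to be the main obstacle. I would prove it by induction on $j-i$, expanding $\bar{e}_{i,j}^{(r)}$ via the first equality and then migrating the outer bracket from the $(j-1,j)$ end to the $(i,i+1)$ end, using (\ref{pre 1}) to discard brackets between non-adjacent generators, (\ref{pre 2}) to trade a degree between the two factors, and (\ref{pre 3}) to reorder the nested commutators. This is precisely the bookkeeping carried out over $\CC$ in \cite[(53)-(55)]{Gow07}, and the only extra care needed in characteristic $p$ is to track the parity sign factors $(-1)^{|j-1|}$ and $(-1)^{|i+1|}$ produced by the odd generators.
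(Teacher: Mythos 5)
Your proposal is correct and takes essentially the same route as the paper, which simply defers to Gow's characteristic-zero argument: the filtered-degree comparisons you spell out, based on (\ref{Drinfeld generators relation 4})--(\ref{Drinfeld generators relation 6}), (\ref{Drinfeld generators relation 8}) and the inductive definition (\ref{induction eij,fij}), are exactly the intended mechanism, and your induction on $j-i$ for the second equality of (\ref{pre 4}) is the standard bookkeeping. You also rightly locate the content of (\ref{pre 2}) and (\ref{pre 3}) at adjacent indices $|i-k|=1$ (the case actually invoked in Proposition \ref{theta is injective}, so the stated condition $|i-k|\neq 1$ must be a typo); the only blemish is cosmetic, since the right-hand side of (\ref{Drinfeld generators relation 4}) sits one (not two) filtration degrees below the commutator, which is all the argument needs.
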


\begin{Proposition}\label{theta is injective}
The map $\theta$ in Theorem \ref{theorem Drinfeld presentation} is injective.
\end{Proposition}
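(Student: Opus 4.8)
The plan is to prove injectivity by passing to the associated graded algebra for the loop filtration and realising $\gr\theta$ as an isomorphism. First I would equip $\widehat{Y}_{m|n}$ with the loop filtration by declaring each of $d_i^{(r)},d_i'^{(r)},e_j^{(r)},f_j^{(r)}$ (and hence, via the inductive definition (\ref{induction eij,fij}), each higher root vector $e_{i,j}^{(r)}$ and $f_{j,i}^{(r)}$) to have filtered degree $r-1$. Since $\theta$ sends every generator to the element of $Y_{m|n}$ bearing the same name, which has the same filtered degree, $\theta$ is a filtered homomorphism and induces $\gr\theta:\gr\widehat{Y}_{m|n}\to\gr Y_{m|n}$. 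As $\theta$ is already known to be surjective, $\gr\theta$ is surjective; and for a filtered map between exhaustive, bounded-below filtrations, injectivity of $\theta$ follows once $\gr\theta$ is shown to be injective. This reduces everything to an assertion about $\gr\widehat{Y}_{m|n}$.

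The second and main step is to produce a surjection $\pi:U(\fg)\twoheadrightarrow\gr\widehat{Y}_{m|n}$ in the opposite direction, where $\fg=\gl_{m|n}[x]$. Writing $\bar{d}_i^{(r)},\bar{e}_{i,j}^{(r)},\bar{f}_{j,i}^{(r)}$ for the symbols of the corresponding elements, I would take the top-filtered-degree part of each defining relation of Theorem \ref{theorem Drinfeld presentation} and verify that these symbols satisfy exactly the supercommutation relations (\ref{Lie bracket of g}) of $\fg$, after the index shift $r\mapsto r-1$ and with the sign conventions of Lemma \ref{lemma U(g) isomorphic to graded of loop filtration}. For the simple root vectors this is read off directly from (\ref{Drinfeld generators relation 1})--(\ref{Drinfeld generators relation 7}); for the non-simple $\bar{e}_{i,j}^{(r)},\bar{f}_{j,i}^{(r)}$ one invokes Lemma \ref{pre lemma} (and its $f$-analogue), whose identities (\ref{pre 1})--(\ref{pre 4}) guarantee that these symbols are well defined in $\gr\widehat{Y}_{m|n}$ and bracket correctly. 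Once all brackets match, the universal property of $U(\fg)$ furnishes $\pi$, and $\pi$ is automatically surjective because $\gr\widehat{Y}_{m|n}$ is generated by the symbols of the algebra generators of $\widehat{Y}_{m|n}$.

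Finally I would compose the two maps. The composite $\gr\theta\circ\pi:U(\fg)\to U(\fg)\cong\gr Y_{m|n}$ is an algebra homomorphism, so it suffices to evaluate it on the generators $e_{i,j}x^{r-1}$: by construction $\pi$ sends $e_{i,j}x^{r-1}$ to a sign times $\bar{e}_{i,j}^{(r)}$, and $\gr\theta$ then returns the symbol of the genuine Yangian element $\theta(e_{i,j}^{(r)})=e_{i,j}^{(r)}$, which is identified with $e_{i,j}x^{r-1}$ under Lemma \ref{lemma U(g) isomorphic to graded of loop filtration}. With the normalising signs in the definition of $\pi$ chosen so that the signs cancel, the composite is $\id_{U(\fg)}$. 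Hence $\pi$ is injective; being also surjective it is an isomorphism, so $\gr\theta=\pi^{-1}$ is an isomorphism, $\gr\theta$ is injective, and $\theta$ is injective. Equivalently, via the PBW basis of Theorem \ref{PBW theorem Ymn}, the computation shows that the ordered supermonomials in $\{f_{j,i}^{(r)}\}\cup\{d_i^{(r)}\}\cup\{e_{i,j}^{(r)}\}$ span $\widehat{Y}_{m|n}$ and are sent by $\theta$ to the PBW basis of $Y_{m|n}$, so they form a basis and $\theta$ is bijective.

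I expect the main obstacle to be the second step: checking that the leading terms of \emph{all} the defining relations, including the cubic and quartic Serre relations (\ref{Drinfeld generators relation 8})--(\ref{Drinfeld generators relation 13}), reproduce precisely the current-algebra brackets for every pair of root vectors. The delicate points are the sign factors attached to the odd generators, which must be tracked through every straightening and through the inductive definition (\ref{induction eij,fij}), and the role of the extra relations introduced relative to \cite{Gow07}: these are exactly what make the verification go through uniformly, in particular when $p=2$.
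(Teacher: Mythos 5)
Your overall strategy --- pass to the associated graded algebra for the loop filtration and compare with $U(\fg)$ via the PBW theorem for $Y_{m|n}$ --- is the same as the paper's, but the proposal stops exactly where the proof begins. The entire mathematical content of the paper's argument is the verification that the symbols of the inductively defined root vectors satisfy
\[
[\bar{e}_{i,j}^{(r)},\bar{e}_{k,l}^{(s)}]=(-1)^{|j|}\delta_{kj}\bar{e}_{il}^{(r+s-1)}-(-1)^{|i||j|+|i||k|+|j||k|}\delta_{li}\bar{e}_{kj}^{(r+s-1)},
\]
and this does \emph{not} follow from Lemma \ref{pre lemma} alone, contrary to your claim that its identities ``guarantee that these symbols \dots bracket correctly.'' One must in addition establish the four relations (\ref{diff 1})--(\ref{diff 4}), and these are precisely where the cubic and quartic Serre relations (\ref{Drinfeld generators relation 10}) and (\ref{Drinfeld generators relation 12}) enter (followed by an induction on $j-i$ and $k-i$ for (\ref{diff 4})); in positive characteristic, and especially for $p=2$, these extra relations are the whole point. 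Deferring this to ``the main obstacle'' leaves the proof with no content beyond the formal reduction.

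A second, structural issue: by insisting on a genuine algebra homomorphism $\pi:U(\fg)\to\gr\widehat{Y}_{m|n}$ you commit yourself to verifying \emph{every} bracket among the symbols, including the mixed ones $[\bar{e}_{i,j}^{(r)},\bar{f}_{k,l}^{(s)}]$ and $[\bar{d}_k^{(s)},\bar{e}_{i,j}^{(r)}]$ for non-simple root vectors, none of which appear among the defining relations and all of which would require further inductive computation. The paper (following Gow) avoids this entirely: the triangular decomposition reduces the spanning statement to the subalgebra $\widehat{Y}^{+}_{m|n}$, so only the $e$--$e$ brackets (and, via the anti-automorphism $\tau$, the $f$--$f$ brackets) need to be computed in the graded algebra, and one concludes because $\theta$ sends a spanning set of ordered supermonomials onto the linearly independent PBW basis of Theorem \ref{PBW for Ymn}. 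Your closing sentence gestures at exactly this spanning argument; if you adopt it you can drop the map $\pi$ on all of $U(\fg)$, but you must still supply the computations (\ref{diff 1})--(\ref{diff 4}).
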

\begin{proof}
By the same argument as in \cite[pp. 814]{Gow07} (see also \cite[Section 8]{Peng11}), in order to proving the injectivity of $\theta$,
it suffices to show that the following relation
\begin{align}\label{relation in graded algebra}
[\bar{e}_{i,j}^{(r)},\bar{e}_{k,l}^{(s)}]=(-1)^{|j|}\delta_{kj}\bar{e}_{il}^{(r+s-1)}-(-1)^{|i||j|+|i||k|+|j||k|}\delta_{li}\bar{e}_{kj}^{(r+s-1)}.
\end{align}
The proof of (\ref{relation in graded algebra}) is the same argument as in the proof of \cite[Theorem 3]{Gow07} except the following four relations (cf. \cite[(56)-(59)]{Gow07}):
\begin{align}
[\bar{e}_{i,i+2}^{(r)},\bar{e}_{i+1,i+2}^{(s)}]&=0,~{\rm for}~1\leq i\leq m+n-2\label{diff 1},\\
[\bar{e}_{i,i+1}^{(r)},\bar{e}_{i,i+2}^{(s)}]&=0,~{\rm for}~1\leq i\leq m+n-2\label{diff 2},\\
[\bar{e}_{i,i+2}^{(r)},\bar{e}_{i+1,i+3}^{(s)}]&=0,~{\rm for}~1\leq i\leq m+n-3\label{diff 3},\\
[\bar{e}_{i,j}^{(r)},\bar{e}_{k,k+1}^{(s)}]&=0,~{\rm for}~1\leq i\leq k<j\leq m+n\label{diff 4}.
\end{align}
By Lemma \ref{pre lemma},
we have
\begin{eqnarray*}
(-1)^{|i+1|}[\bar{e}_{i,i+2}^{(r)},\bar{e}_{i+1,i+2}^{(s)}]&\overset{(\ref{pre 4})}{=}&[[\bar{e}_{i,i+1}^{(r)},\bar{e}_{i+1,i+2}^{(1)}],\bar{e}_{i+1,i+2}^{(s)}]\\
&\overset{(\ref{pre 3})}{=}&-[[\bar{e}_{i,i+1}^{(r)},\bar{e}_{i+1,i+2}^{(s)}],\bar{e}_{i+1,i+2}^{(1)}]\\
&\overset{(\ref{pre 2})}{=}&-[[\bar{e}_{i,i+1}^{(r+s-1)},\bar{e}_{i+1,i+2}^{(1)}],\bar{e}_{i+1,i+2}^{(1)}],
\end{eqnarray*}
and the last term is zero by (\ref{Drinfeld generators relation 10}), this proves (\ref{diff 1}).
For (\ref{diff 2}), the same method in (\ref{diff 1}) works, except that we apply (\ref{pre 4}) on the term $\bar{e}_{i,i+2}^{(s)}$.
By applying (\ref{pre 4}) on the left side of (\ref{diff 3}), we obtain
\begin{eqnarray*}
[\bar{e}_{i,i+2}^{(r)},\bar{e}_{i+1,i+3}^{(s)}]=(-1)^{|i+1|+|i+2|}[[\bar{e}_{i,i+1}^{(r)},\bar{e}_{i+1,i+2}^{(1)}],[\bar{e}_{i+1,i+2}^{(1)},\bar{e}_{i+2,i+3}^{(s)}]],
\end{eqnarray*}
which is zero by (\ref{Drinfeld generators relation 12}), hence (\ref{diff 3}) is true.

To establish (\ref{diff 4}), we first consider the case $i=k$.
We proceed by induction on $j-i$.
When $j-i=1$ (resp. $j-i=2$), this follows from (\ref{pre 1}) (resp. (\ref{diff 2})).
The super-Jacobi identity in conjunction with (\ref{pre 4}) gives
\begin{eqnarray*}
(-1)^{|j-1|}[\bar{e}_{i,i+1}^{(s)},\bar{e}_{i,j}^{(r)}]&=&[\bar{e}_{i,i+1}^{(s)},[\bar{e}_{i,j-1}^{(r)},\bar{e}_{j-1,j}^{(1)}]]\\
&=&[[\bar{e}_{i,i+1}^{(s)},\bar{e}_{i,j-1}^{(r)}],\bar{e}_{j-1,j}^{(1)}]+(-1)^{(|i|+|i+1|)(|i|+|j-1|)}[\bar{e}_{i,j-1}^{(r)},[\bar{e}_{i,i+1}^{(s)},\bar{e}_{j-1,j}^{(1)}]]\\
&=&0,
\end{eqnarray*}
where the last equality follows from the induction hypothesis and (\ref{pre 1}).
For the case $i<k$ in (\ref{diff 4}), we use (\ref{pre 4}) to reduce the problem to showing
$$[\bar{e}_{i,k+1}^{(r)},\bar{e}_{k,k+1}^{(s)}]=0=[\bar{e}_{i,k+1}^{(r)},\bar{e}_{k,k+2}^{(s)}].$$
By using again (\ref{pre 4})-(\ref{diff 3}),
this follows from the induction on $k-i$.
\end{proof}

Recall that by Lemma \ref{lemma U(g) isomorphic to graded of loop filtration},
we may identify $e_{i,j}x^{r}$ with $(-1)^{|i|}{\rm gr}_rt_{i,j}^{(r+1)}$ via the identification
$U(\fg)$ and ${\rm gr} Y_{m|n}$.
Using (\ref{quasideterminants D})-(\ref{quasideterminants F}),
one sees that $d_i^{(r+1)}, e_{i,j}^{(r+1)}$ and $f_{j,i}^{(r+1)}$ all belong to ${\rm F}_rY_{m|n}$.
Combining with \cite[Proposition 8.1, 8.4]{Peng11}, and under our identification we have that
\begin{equation}\label{identification}
e_{i,j}x^{r}
=\left\{
\begin{array}{ll}
(-1)^{|i|}\gr_{r}d_i^{(r+1)}&\text{if $i=j$,}\\
(-1)^{|i|}\gr_{r}e_{i,j}^{(r+1)}&\text{if $i<j$,}\\
(-1)^{|i|}\gr_{r}f_{i,j}^{(r+1)}&\text{if $i>j$.}
\end{array}\right.
\end{equation}

Using PBW theorem for $U(\fg)$, we obtian
the PBW basis for $Y_{m|n}$ (cf. \cite[Corollary 8.5]{Peng11}).

\begin{Theorem}\label{PBW for Ymn}
Ordered supermonomials
in the elements
\begin{equation*}
\{d_i^{(r)};~1\leq i \leq m+n, r > 0\} \cup
\{e_{i,j}^{(r)}, f_{j,i}^{(r)}\:\big|\: 1\leq i < j \leq m+n, r >
0\}
\end{equation*}
taken in any fixed ordering
form a basis for $Y_{m|n}$.
\end{Theorem}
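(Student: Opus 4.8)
The plan is to run the standard filtered--graded comparison, transporting the PBW basis of $U(\fg)$ back to $Y_{m|n}$ through the isomorphism $\gr Y_{m|n}\cong U(\fg)$ of Lemma \ref{lemma U(g) isomorphic to graded of loop filtration}. Fix once and for all a total order on the index set labelling the elements $d_i^{(r)}$, $e_{i,j}^{(r)}$, $f_{j,i}^{(r)}$ ($1\le i<j\le m+n$, $r>0$). By (\ref{identification}) this is the very set that labels the PBW generators $e_{i,j}x^{r-1}$ of $\fg$, so ordered supermonomials in the Yangian elements are in bijection with ordered supermonomials in the generators of $U(\fg)$, the bijection respecting the parity constraint (odd generators occurring with exponent $\le 1$ on both sides). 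Recall from (\ref{identification}) that each $d_i^{(r+1)}, e_{i,j}^{(r+1)}, f_{j,i}^{(r+1)}$ lies in $\mathrm{F}_rY_{m|n}$ and has leading symbol $(-1)^{|i|}e_{i,j}x^{r}$ (respectively $e_{j,i}x^r$) in $\gr_rY_{m|n}\cong U(\fg)$; the elements with $|i-j|>1$ are those built from the simple ones through (\ref{induction eij,fij}), and Theorem \ref{Ymn generated by fde} together with (\ref{induction eij,fij}) already guarantees that the whole collection generates $Y_{m|n}$.

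The key multiplicativity step is the following. Let $M=X_{\alpha_1}\cdots X_{\alpha_k}$ be an ordered supermonomial in these elements, where $X_{\alpha_t}$ has filtered degree $d_t$ (so $X_{\alpha_t}\in\mathrm{F}_{d_t}Y_{m|n}$), and put $d=\sum_t d_t$. Then $M\in\mathrm{F}_dY_{m|n}$, and its image in $\mathrm{F}_dY_{m|n}/\mathrm{F}_{d-1}Y_{m|n}$ equals the product of the leading symbols of the factors, which under the isomorphism of Lemma \ref{lemma U(g) isomorphic to graded of loop filtration} is $\pm$ the corresponding ordered supermonomial $e_{i_1,j_1}x^{r_1}\cdots e_{i_k,j_k}x^{r_k}$ in $U(\fg)$. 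By the PBW theorem for the enveloping algebra $U(\fg)=U(\gl_{m|n}[x])$ (in its super form, with exterior generators for the odd part), this ordered supermonomial is a nonzero basis element of $U(\fg)$. Hence the product of leading symbols is nonzero, no collapse of filtered degree occurs, and $M\notin\mathrm{F}_{d-1}Y_{m|n}$ with $\gr_d M$ equal to that nonzero basis vector.

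It then remains to invoke the elementary filtered--graded principle: if $A=\bigcup_r\mathrm{F}_rA$ is an exhaustively filtered algebra and a family of elements of $A$ has the property that their leading symbols form a basis of $\gr A$, then the family itself is a basis of $A$. Spanning follows by downward-free induction on filtered degree, subtracting off the unique graded combination; linear independence follows because the top-degree part of any finite nontrivial relation would descend to a nontrivial relation among the leading symbols. Applying this to $A=Y_{m|n}$ with the loop filtration (\ref{loop filtration}), to the family of ordered supermonomials, and to the PBW basis of $U(\fg)=\gr Y_{m|n}$ produced in the previous step, we conclude that the ordered supermonomials form a basis of $Y_{m|n}$.

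The main obstacle, and essentially the only point requiring genuine care, is the multiplicativity step in characteristic $p$, and in particular at $p=2$: one must ensure that the super PBW theorem for $U(\fg)$ is available in the stated form (polynomial generators for the even part, exterior generators for the odd part), and that the sign bookkeeping carried along when pushing leading symbols through products is consistent with the supercommutative structure of $\gr Y_{m|n}$. Once the leading symbols of the ordered supermonomials are identified with a genuine PBW basis of $U(\fg)$, everything else is formal.
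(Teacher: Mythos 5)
Your proof is correct and follows essentially the same route as the paper: the paper's own (very terse) argument is precisely to combine the identification (\ref{identification}) of leading symbols with the isomorphism $\gr Y_{m|n}\cong U(\fg)$ from Lemma \ref{lemma U(g) isomorphic to graded of loop filtration} and then transport the super PBW basis of $U(\fg)$ back through the loop filtration. You have simply written out the standard multiplicativity-of-symbols and filtered--graded steps that the paper leaves implicit.
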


\begin{Lemma}\label{useful lemma 1}
The following relations hold in $Y_{m|n}[[u^{-1}, v^{-1}]]$
for all $l \geq 0$:
\begin{align}
(u-v)[e_i(u),(e_i(v)-e_i(u))^l]&=(-1)^{|i+1|}l(e_i(v)-e_i(u))^{l+1}\nonumber\\
&=(-1)^{|i|}l(e_i(v)-e_i(u))^{l+1},\label{new1}\\
(u-v)[e_i(u), d_i(v)(e_i(v)-e_i(u))^l] &=(-1)^{|i|}(l-1)d_i(v)(e_i(v)-e_i(u))^{l+1},\label{new2}\\
(u-v)[e_i(u), d_{i+1}(v)(e_i(v)-e_i(u))^l] &=(-1)^{|i+1|}(l+1)d_{i+1}(v)(e_i(v)-e_i(u))^{l+1},\label{new3}\\
(u-v)[e_i(u), d_{i+1}(v)(e_i(v)\!-\!e_i(u))^ld_i(v)^{-1}] &=(-1)^{|i+1|}(l+2)d_{i+1}(v)(e_i(v)\!-\!e_i(u))^{l+1}d_i(v)^{-1}.\label{new4}
\end{align}
\end{Lemma}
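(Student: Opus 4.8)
The plan is to deduce all four identities from the single quadratic relation \eqref{identities in Ymn-6} together with the $d$--$e$ commutation rules \eqref{identities in Ymn-3}, organizing the computation so that \eqref{new2}--\eqref{new4} are each reduced to \eqref{new1} by one application of the super-Leibniz rule. Throughout I write $X:=e_i(v)-e_i(u)$ and use that $\ad e_i(u)=[e_i(u),-]$ is a super-derivation of parity $|i|+|i+1|$, so that $[e_i(u),ab]=[e_i(u),a]\,b+(-1)^{(|i|+|i+1|)|a|}a\,[e_i(u),b]$.

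First I would assemble the four ``degree-one'' brackets feeding the induction. Relation \eqref{identities in Ymn-6} gives $(u-v)[e_i(u),e_i(v)]=(-1)^{|i+1|}X^2$, and since $(u-v)[e_i(u),e_i(u)]$ vanishes (trivially if $e_i$ is even; in the odd case because $2=0$ in characteristic $2$, or $e_m(u)^2=0$ in odd characteristic, see below), this yields $(u-v)[e_i(u),X]=(-1)^{|i+1|}X^2$. Transposing the two series in \eqref{identities in Ymn-3} and inverting $d_i(v)$ produces
\begin{align*}
(u-v)[e_i(u),d_i(v)]&=-(-1)^{|i|}d_i(v)X, & (u-v)[e_i(u),d_{i+1}(v)]&=(-1)^{|i+1|}d_{i+1}(v)X,\\
(u-v)[e_i(u),d_i(v)^{-1}]&=(-1)^{|i|}X\,d_i(v)^{-1}.
\end{align*}
Now \eqref{new1} follows by expanding $[e_i(u),X^{l}]=\sum_{k=0}^{l-1}(-1)^{(|i|+|i+1|)k}X^{k}[e_i(u),X]X^{l-1-k}$ and substituting $(u-v)[e_i(u),X]=(-1)^{|i+1|}X^{2}$: when $e_i$ is even all signs are trivial and the sum telescopes to $(-1)^{|i+1|}lX^{l+1}$, and the two displayed right-hand sides of \eqref{new1} coincide because $|i|=|i+1|$ here. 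Then \eqref{new2} and \eqref{new3} come from applying the Leibniz rule to $d_i(v)\cdot X^{l}$ and to $d_{i+1}(v)\cdot X^{l}$, inserting \eqref{new1} and the relevant degree-one bracket; the coefficients $l-1=-1+l$ and $l+1=1+l$ are precisely the two contributions. Finally \eqref{new4} results from one further application of the Leibniz rule to $\bigl(d_{i+1}(v)X^{l}\bigr)\cdot d_i(v)^{-1}$, feeding in \eqref{new3} and the bracket with $d_i(v)^{-1}$; the two terms combine (again using $|i|=|i+1|$) to the coefficient $l+2$.

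The main obstacle is the odd simple root $i=m$, where $|i|\neq|i+1|$ and the naive telescoping no longer matches the stated coefficients. Here I would invoke Remark \ref{rk2}: in characteristic $\neq 2$ the bracket $[e_m(u),e_m(v)]$ is symmetric in $u,v$ while the factor $(u-v)$ is antisymmetric, forcing $X^{2}=(e_m(v)-e_m(u))^{2}=0$; extracting the coefficient of $u^{-r}v^{-s}$ gives $e_m^{(r)}e_m^{(s)}+e_m^{(s)}e_m^{(r)}=0$, whence also $e_m(u)^{2}=0$, which is what was used above. Consequently every power $X^{l+1}$ with $l\geq 1$ vanishes, so the identities collapse and only their low-degree instances survive, to be checked by direct inspection. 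In characteristic $2$ one instead uses $\Ymn\cong Y_{m+n}$, so that every sign equals $1$ and the computation is identical to the purely even case, matching \cite{BT18}. Combining the even case, the characteristic-$2$ case, and the odd/odd-characteristic degeneration finishes the proof.
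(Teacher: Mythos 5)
Your proposal follows the paper's own proof essentially step for step: \eqref{new1} is obtained from \eqref{identities in Ymn-6} by the Leibniz rule when $i\neq m$; the case $i=m$ is disposed of via $(e_m(v)-e_m(u))^2=0$ in odd characteristic (and via $Y_{m|n}\cong Y_{m+n}$ in characteristic $2$); the degree-one brackets \eqref{new5}--\eqref{new8} are read off from \eqref{identities in Ymn-3}; and \eqref{new2}--\eqref{new4} are then deduced by one further application of Leibniz. So the route is the same, and to the extent the paper's argument is complete, so is yours.

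The one point worth flagging is precisely the step you defer with ``only their low-degree instances survive, to be checked by direct inspection.'' For \eqref{new2} and \eqref{new3} that inspection is harmless, but for \eqref{new4} with $i=m$, $l=0$ and $p\neq 2$ it does not close up: the two Leibniz contributions carry the coefficients $(-1)^{|i+1|}$ (from $[e_i(u),d_{i+1}(v)]$) and $(-1)^{|i|}$ (from $[e_i(u),d_i(v)^{-1}]$), so the left-hand side equals $\bigl((-1)^{|i|}+(-1)^{|i+1|}\bigr)d_{i+1}(v)(e_i(v)-e_i(u))d_i(v)^{-1}$, which vanishes for $i=m$ (consistently with $[h_m(u),e_m(v)]=0$ in Corollary \ref{corollary: h e bracket}), whereas the stated right-hand side $2(-1)^{|m+1|}d_{m+1}(v)(e_m(v)-e_m(u))d_m(v)^{-1}$ does not. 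The doubling to the coefficient $(l+2)(-1)^{|i+1|}$ genuinely uses $|i|=|i+1|$, and for $i=m$, $l\geq 1$ both sides are zero only because $X^{l+1}=0$. So \eqref{new4} should either be restricted to $i\neq m$ (which suffices for its later use, since the $i=m$ case of Corollary \ref{corollary: h e bracket} is argued separately anyway) or have its coefficient corrected at the odd root. The paper's proof has exactly the same blind spot --- it also just says ``using Leibniz again'' without isolating $i=m$ --- so this is not a defect of your strategy relative to the paper's, but your deferred inspection is where the issue would surface if carried out.
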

\begin{proof}
We first check (\ref{new1}).
If $i\neq m$, then $|i|=|i+1|$ and $e_i(u)$ is even.
In this case, the relation (\ref{new1}) follows from (\ref{identities in Ymn-6}) and the Leibniz rule.
It remains to treat the case $i=m$.
There is nothing to prove, if $p=2$. So assume $p\neq 2$.
Now Remark \ref{rk2} readily yields
$$(u-v)[e_m(u),e_m(v)]=-(e_m(v)-e_m(u))^2=0,$$
so that all terms of (\ref{new1}) are equal to $0$.

In view of (\ref{identities in Ymn-3}), we have
\begin{align}
(u-v)[e_i(u),d_i(v)]&=(-1)^{|i|}d_i(v)(e_i(u)-e_i(v)),\label{new5}\\
(u-v)[e_i(u),d_i(v)^{-1}]&=(-1)^{|i|}(e_i(v)-e_i(u))d_i(v)^{-1},\label{new6}\\
(u-v)[e_i(u),d_{i+1}(v)]&=(-1)^{|i+1|}d_{i+1}(v)(e_i(v)-e_i(u)),\label{new7}\\
(u-v)[e_i(u),d_{i+1}(v)^{-1}]&=(-1)^{|i+1|}(e_i(u)-e_i(v))d_{i+1}(v)^{-1}.\label{new8}
\end{align}
Then (\ref{new2})-(\ref{new4}) follows from (\ref{new1}), (\ref{new5})-(\ref{new7}) using Leibniz again.
\end{proof}

The following result follows from (\ref{new5}) and (\ref{new7}) by specializing $v$.
\begin{Corollary}\label{ed}
The following hold in $Y_{m|n}[[u^{-1}]]$:
\begin{align}
e_i(u-(-1)^{|i|})d_i(u)&=d_i(u)e_i(u),&
d_i(u)^{-1}
e_i(u-(-1)^{|i|})&=e_i(u)d_i(u)^{-1},\label{ed1}\\
e_i(u+(-1)^{|i+1|})d_{i+1}(u)&=d_{i+1}(u)e_i(u), &
d_{i+1}(u)^{-1}e_i(u+(-1)^{|i+1|})&=e_i(u)d_{i+1}(u)^{-1}.\label{ed 2}
\end{align}
\end{Corollary}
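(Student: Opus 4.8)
The plan is to derive all four identities from the two-variable relations (\ref{new5}) and (\ref{new7}) by specializing the spectral parameters, exactly as the statement suggests. Throughout set $c:=(-1)^{|i|}$ and $c':=(-1)^{|i+1|}$, and note that both are units in $\kk$. Since each $d_i(v)$ is even, the supercommutator in (\ref{new5}) is the ordinary commutator, so (\ref{new5}) reads $(u-v)\bigl(e_i(u)d_i(v)-d_i(v)e_i(u)\bigr)=c\,d_i(v)(e_i(u)-e_i(v))$. I would first rearrange this to collect the two occurrences of $e_i$ to the right of $d_i(v)$:
\begin{align*}
(u-v)\,e_i(u)d_i(v)=(u-v+c)\,d_i(v)e_i(u)-c\,d_i(v)e_i(v).
\end{align*}

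The key step is then to specialize $v=u+c$, which forces the coefficient $(u-v+c)$ to vanish and leaves $-c\,e_i(u)d_i(u+c)=-c\,d_i(u+c)e_i(u+c)$; cancelling the unit $-c$ and relabelling $u\mapsto u-c$ yields the first identity $e_i(u-(-1)^{|i|})d_i(u)=d_i(u)e_i(u)$ of (\ref{ed1}). The second identity of (\ref{ed1}) follows by conjugating this relation by $d_i(u)$, i.e.\ multiplying on the left and right by $d_i(u)^{-1}$ (which exists since $T(u)$, and hence each $d_i(u)$, is invertible). The two identities involving $d_{i+1}$ are obtained in exactly the same way from (\ref{new7}): the analogous rearrangement there produces the coefficient $(u-v-c')$ in front of $d_{i+1}(v)e_i(u)$, so one specializes $v=u-c'$, cancels the unit $c'$, and relabels to get $e_i(u+(-1)^{|i+1|})d_{i+1}(u)=d_{i+1}(u)e_i(u)$, whence the companion identity by conjugating with $d_{i+1}(u)^{-1}$.

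The point that genuinely needs care — and the only real obstacle — is justifying the specialization itself, since (\ref{new5}) as written carries a bare factor $(u-v)$ and so is not literally a product of elements of $Y_{m|n}[[u^{-1},v^{-1}]]$. To make the substitution rigorous I would first rewrite the right-hand side using (\ref{gv-gu/u-v}), which identifies the divided difference $\tfrac{e_i(u)-e_i(v)}{u-v}$ with the honest power series $-\sum_{r,s\geq1}e_i^{(r+s-1)}u^{-r}v^{-s}$; then (\ref{new5}) becomes a true identity in $Y_{m|n}[[u^{-1},v^{-1}]]$, namely $[e_i(u),d_i(v)]=c\,d_i(v)\bigl(-\sum_{r,s\geq1}e_i^{(r+s-1)}u^{-r}v^{-s}\bigr)$. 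The specialization $v=u+c$ is then the continuous $\kk$-algebra homomorphism $Y_{m|n}[[u^{-1},v^{-1}]]\to Y_{m|n}[[u^{-1}]]$ sending $v^{-1}$ to the expansion of $(u+c)^{-1}$ in $u^{-1}$ (well defined because $(u+c)^{-1}\in u^{-1}Y_{m|n}[[u^{-1}]]$), and applying it preserves the identity. The one computation to verify is that this homomorphism sends the divided-difference series to $\tfrac{e_i(u)-e_i(u+c)}{-c}$; this is a short geometric-series manipulation, using crucially that $c=\pm1$ is invertible, after which the factor $c/(-c)$ collapses and the claimed relation drops out. I expect the sign bookkeeping and the tracking of which spectral parameter is specialized — so that the final identities come out with $d_i(u)$ rather than $d_i(u\pm 1)$ — to be the only places where errors could creep in; no deeper difficulty arises.
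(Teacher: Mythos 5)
Your proposal is correct and follows exactly the paper's route: the paper's entire proof is the remark that the corollary "follows from (\ref{new5}) and (\ref{new7}) by specializing $v$," and your computation (rearranging, setting $v=u\pm(-1)^{|i|}$ to kill the $d_i(v)e_i(u)$ term, relabelling, then conjugating by $d_i(u)^{-1}$) is precisely the intended argument, with the specialization correctly justified via (\ref{gv-gu/u-v}).
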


\begin{Lemma}\label{useful lemma 2}
For any $i = 1,\dots,m+n-1$, $l \geq 0$ and $r,s > 0$, we have that
\begin{align}\label{111}
\Bigg[e_i^{(r)}, \hspace{-6mm}\sum_{\substack{s_1,\dots,s_l \geq r \\ s_1+\cdots+s_l=
    (l-1)(r-1)+s}}
\hspace{-10mm}
e_i^{(s_1)}
\cdots
e_i^{(s_l)}\Bigg]
&=(-1)^{|i|}l
\hspace{-6mm}\sum_{\substack{s_1,\dots,s_{l+1} \geq r \\ s_1+\cdots+s_{l+1} =
    l(r-1)+s}}
\hspace{-10mm}e_i^{(s_1)}
\cdots
e_i^{(s_{l+1})},\\
\Bigg[e_i^{(r)}, \hspace{-6mm}\sum_{\substack{s_1,\dots,s_l\leq r-1 \\ s_1+\cdots+s_l =
    (l-1)(r-1)+s}}\hspace{-10mm}
e_i^{(s_1)}
\cdots
e_i^{(s_l)}\Bigg]
&= -(-1)^{|i|}l\hspace{-6mm}
\sum_{\substack{s_1,\dots,s_{l+1} \leq r-1 \\ s_1+\cdots+s_{l+1} =
    l(r-1)+s}}\hspace{-10mm}
e_i^{(s_1)}
\cdots
e_i^{(s_{l+1})},\label{222}\\
\Bigg[e_i^{(r)}, \hspace{-6mm}\sum_{\substack{s_1,\dots,s_l \geq r, t \geq 0 \\ s_1+\cdots+s_l+t =
    l(r-1)+s}}\hspace{-10mm}
d_i^{(t)}e_i^{(s_1)}
\cdots
e_i^{(s_l)}\Bigg]
&= (-1)^{|i|}(l-1)\hspace{-12mm}
\sum_{\substack{s_1,\dots,s_{m+1} \geq r, t \geq 0 \\ s_1+\cdots+s_{m+1}+t =
    (l+1)(r-1)+s}}
\hspace{-14mm}d_i^{(t)}e_i^{(s_1)}
\cdots
e_i^{(s_{l+1})},\label{333}
\end{align}\begin{align}
\Bigg[e_i^{(r)}, \hspace{-6mm}\sum_{\substack{s_1,\dots,s_m \geq r, t \geq 0 \\ s_1+\cdots+s_l+t =
    l(r-1)+s}}\hspace{-10mm}
d_{i+1}^{(t)}e_i^{(s_1)}
\cdots
e_i^{(s_l)}\Bigg]
&=(-1)^{|i+1|}(l+1)
\hspace{-12mm}\sum_{\substack{s_1,\dots,s_{l+1} \geq r, t \geq 0 \\ s_1+\cdots+s_{l+1}+t =
    (l+1)(r-1)+s}}
\hspace{-14mm}d_{i+1}^{(t)}e_i^{(s_1)}
\cdots
e_i^{(s_{l+1})},\label{444}\\
\Bigg[e_i^{(r)}, \hspace{-8mm}\sum_{\substack{s_1,\dots,s_l \geq r,
    t\geq 0, u\geq 0 \\
    s_1+\cdots+s_l+t+u =
    l(r-1)+s}}\hspace{-12mm}
d_{i+1}^{(t)}e_i^{(s_1)}
\cdots
e_i^{(s_l)}
d_i'^{(u)}\Bigg]
&=(-1)^{|i+1|}(l+2)\hspace{-16mm}
\sum_{\substack{s_1,\dots,s_{l+1} \geq r, t\geq 0, u \geq 0 \\
    s_1+\cdots+s_{l+1} +t+u=
    (l+1)(r-1)+s}}
\hspace{-17mm}d_{i+1}^{(t)} e_i^{(s_1)}
\cdots
e_i^{(s_{l+1})}
d_i'^{(u)}.\label{555}
\end{align}
\end{Lemma}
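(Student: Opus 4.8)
The plan is to read (\ref{111})--(\ref{555}) as the \emph{component forms} of the generating-series identities (\ref{new1})--(\ref{new4}) of Lemma \ref{useful lemma 1} and to prove them by induction on $l$. One cannot simply extract the coefficient of $u^{-r}$ from Lemma \ref{useful lemma 1}, because the spectral variable $u$ carrying $e_i^{(r)}$ on the left also appears inside the bracketed power $(e_i(v)-e_i(u))^{l}$; the role of the induction is precisely to eliminate this internal $u$-dependence.

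To keep the index bookkeeping under control I would encode the tail sums $s_j\geq r$ by the one-variable series $P_r(z):=\sum_{k\geq r}e_i^{(k)}z^{-(k-r+1)}$ and the head sums $s_j\leq r-1$ by $Q_r(z):=\sum_{1\leq k\leq r-1}e_i^{(k)}z^{\,r-1-k}$. Under this device (\ref{111}) and (\ref{222}) become the single-variable identities $[e_i^{(r)},P_r^{\,l}]=(-1)^{|i|}l\,P_r^{\,l+1}$ and $[e_i^{(r)},Q_r^{\,l}]=-(-1)^{|i|}l\,Q_r^{\,l+1}$, read off coefficient-by-coefficient in $z$; the passage $(l-1)(r-1)+s\mapsto l(r-1)+s$ together with the ranges on the $s_j$ is exactly what the substitution automates. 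Their base cases $l=0,1$ are the Drinfeld relations (\ref{Drinfeld generators relation 4})--(\ref{Drinfeld generators relation 5}) (that is, Lemma \ref{useful lemma 1} at $l=1$), which in particular give $[e_i^{(r)},P_r]=(-1)^{|i|}P_r^{2}$. Identities (\ref{333})--(\ref{555}) are then obtained by carrying a prefactor $d_i(z)$, $d_{i+1}(z)$ or $d_{i+1}(z)(\cdots)d_i(z)^{-1}$ through the same induction and feeding in the $e_i$--$d$ commutators (\ref{new5})--(\ref{new8}) (equivalently Corollary \ref{ed}); these are what convert the constant $l$ into the shifted constants $l-1,\,l+1,\,l+2$.

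The inductive step peels off one factor by the super-Leibniz rule,
\[
[e_i^{(r)},P_r^{\,l+1}]=[e_i^{(r)},P_r^{\,l}]\,P_r+(-1)^{\,l|e_i|}P_r^{\,l}\,[e_i^{(r)},P_r],
\]
after which the induction hypothesis and the base commutator collapse the right-hand side to $(-1)^{|i|}(l+1)P_r^{\,l+2}$. The hard part is the odd index $i=m$: there $e_i^{(r)}$ is odd, the sign $(-1)^{\,l|e_i|}$ equals $-1$ for odd $l$, and the count $l\mapsto l+1$ fails. This is settled exactly as in Remark \ref{rk2}: when $p\neq 2$ one has $(e_m(u)-e_m(v))^2=0$, hence $P_r^{2}=0$, so both sides of each identity vanish the moment two $e_m$-factors meet; when $p=2$ all signs are trivial and $Y_{m|n}\cong Y_{m+n}$ reduces everything to the purely even Yangian of \cite{BT18}. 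What remains is the routine but fiddly check that the truncation ranges and the shifted constants match after reorganizing the sums, and here the $P_r,Q_r$ reformulation is what makes the combinatorics transparent.
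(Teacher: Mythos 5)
Your strategy --- establish the component commutators (\ref{666})--(\ref{777}), package the tail/head sums as powers of one-variable series, and induct on $l$ via super-Leibniz --- is essentially the paper's own proof, which for $p=2$ or $i\neq m$ defers to the inductive method of \cite[Lemma 4.9]{BT18} built on exactly those component relations, and for $i=m$, $p>2$ argues that symmetric sums of products of two or more of the odd, square-zero, mutually supercommuting elements $e_m^{(r)}$ vanish. Your $P_r,Q_r$ bookkeeping is a clean repackaging of that induction, and the coefficient extractions and base cases all check out for (\ref{111}), (\ref{222}) and for $i\neq m$ throughout.

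There is, however, a concrete gap in your treatment of $i=m$, $p\neq 2$, sitting exactly where your blanket claim that ``both sides of each identity vanish the moment two $e_m$-factors meet'' stops applying: the low-$l$ cases of (\ref{333})--(\ref{555}), where the bracketed sum contains at most one $e_m$-factor. For (\ref{333}), (\ref{444}) and for $l\geq 1$ of (\ref{555}) one checks that the Leibniz computation still closes (the surviving terms either reduce to a single $e$--$d$ commutator or acquire a factor $P_r^2=0$). But at $l=0$ of (\ref{555}) it does not: carrying the prefactor through Leibniz gives $[e_m^{(r)},d_{m+1}(z)d_m'(z)]=\big((-1)^{|m+1|}+(-1)^{|m|}\big)d_{m+1}(z)P_r(z)d_m'(z)=0$, whereas the stated right-hand side is $(-1)^{|m+1|}\cdot 2\cdot d_{m+1}(z)P_r(z)d_m'(z)\neq 0$. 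Concretely, for $r=s=1$ the left side of (\ref{555}) is $[e_m^{(1)},d_{m+1}^{(1)}+d_m'^{(1)}]=-e_m^{(1)}+e_m^{(1)}=0$ by (\ref{Drinfeld generators relation 1}) and (\ref{777}), while the right side is $-2e_m^{(1)}$. So your method, carried out honestly, cannot produce the constant $l+2$ at $i=m$, $l=0$; the two diagonal contributions cancel instead of doubling because $|m|\neq |m+1|$. This defect is inherited from (\ref{new4}) of Lemma \ref{useful lemma 1} at $i=m$, $l=0$ (the paper's own proof of Lemma \ref{useful lemma 2} also leaves it unaddressed, writing out the $i=m$ argument only for (\ref{111})), and it is harmless for the applications, which invoke the lemma only for even root subalgebras; but your write-up should either exclude $i=m$ from (\ref{555}) at $l=0$ or replace the constant $(-1)^{|i+1|}(l+2)$ by $(-1)^{|i|}+(-1)^{|i+1|}(l+1)$, which agrees with it for $i\neq m$ and is what your induction actually yields.
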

\begin{proof}
Using (\ref{Drinfeld generators relation 4}) and the same argument as in the proof of Lemma \ref{useful lemma 1} one obtains
\begin{align}\label{666}
[e_i^{(r)},e_i^{(s_j)}]=(-1)^{|i|}\sum_{\substack{s_j', s_j''\geq r\\ s_j'+s_j''=s_j+r-1}}
\!\!\!\!\!e_i^{(s_j')}e_i^{(s_j'')}=(-1)^{|i+1|}\sum_{\substack{s_j', s_j''\geq r\\ s_j'+s_j''=s_j+r-1}}
\!\!\!\!\!e_i^{(s_j')}e_i^{(s_j'')}
\end{align}
for $0<r\leq s_j$, and (\ref{new6}) in conjunction with (\ref{gv-gu/u-v}) gives
\begin{align}\label{777}
[e_i^{(r)},d_i'^{(s)}] =(-1)^{|i|}\sum_{t=0}^{s-1}e_i^{(r+s-1-t)}d_i'^{(t)}.
\end{align}
If $p=2$ or $i\neq m$, then the lemma can be proved in the same method as in \cite[Lemma 4.9]{BT18} using the relations (\ref{666}) and (\ref{777}),
and hence we skip the detail.

So we assume $i=m$ and $p>2$.
For (\ref{111}), we shall show
$$(\dag)\ \ \ \ [e_i^{(r)}, \hspace{-6mm}\sum_{\substack{s_1,\dots,s_l \geq r \\ s_1+\cdots+s_l=
    (l-1)(r-1)+s}}
\hspace{-10mm}
e_i^{(s_1)}
\cdots
e_i^{(s_l)}]=0=(-1)^{|i|}l
\hspace{-6mm}\sum_{\substack{s_1,\dots,s_{l+1} \geq r \\ s_1+\cdots+s_{l+1} =
    l(r-1)+s}}
\hspace{-10mm}e_i^{(s_1)}
\cdots
e_i^{(s_{l+1})}.$$
It is clear that the above equality holds if $l=0$.
Since $e_m^{(r)}$ is odd, (\ref{Drinfeld generators relation 4}) implies that $[e_m^{(r)},e_m^{(s)}]=0$ for all $r,s>0$,
so that in particular $(e_m^{(r)})^2=0$.
This yields
$$\sum_{\substack{s_1,\dots,s_l \geq r \\ s_1+\cdots+s_l=
    (l-1)(r-1)+s}}
\hspace{-10mm}
e_i^{(s_1)}
\cdots
e_i^{(s_l)}=0$$
whenever $l\geq 2$. Hence, $(\dag)$ holds for all $l\geq 1$.
The other parts are proved similarly.
\end{proof}

We define
\begin{align*}
d_{i\downarrow k}(u)&:=d_i(u)d_i(u-1)\cdots d_i(u-k+1),\\
d_{i\uparrow k}(u)&:=d_i(u)d_i(u+1)\cdots d_i(u+k-1).
\end{align*}

\begin{Lemma}\label{cyclic product of di}
The following relations hold for all $k\geq 1$:
\begin{align}
(u-v)[d_{i\downarrow k}(u), e_i(v)]&=kd_{i\downarrow k}(u)(e_i(v)-e_i(u)),~{\rm for}~1\leq i\leq m,\label{d arrow e 1}\\
(u-v)[d_{i\uparrow k}(u), e_i(v)] &=-kd_{i\uparrow k}(u)(e_i(v)-e_i(u)),~{\rm for}~m+1\leq i\leq m+n-1,\label{d arrow e 2}\\
(u-v)[d_{i\uparrow k}(u),e_{i-1}(v)]&=kd_{i\uparrow k}(u)(e_{i-1}(u)-e_{i-1}(v)),~{\rm for}~2\leq i\leq m,\label{d arrow e 3}\\
(u-v)[d_{i\downarrow k}(u),e_{i-1}(v)]&=-kd_{i\downarrow k}(u)(e_{i-1}(u)-e_{i-1}(v)),~{\rm for}~m+1\leq i\leq m+n.\label{d arrow e 4}
\end{align}
\end{Lemma}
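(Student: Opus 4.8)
The plan is to establish all four identities by induction on $k$, treating (\ref{d arrow e 1}) as the model case and deriving the remaining three by the same scheme. The base case $k=1$ of (\ref{d arrow e 1}) is exactly the single-factor relation (\ref{identities in Ymn-3}) with $j=i$, and the base cases of (\ref{d arrow e 2})--(\ref{d arrow e 4}) are the corresponding specialisations of (\ref{identities in Ymn-3}) and (\ref{identities in Ymn-4}). Throughout I would use that the series $d_i(u)$ commute among themselves by (\ref{identities in Ymn-1}), so that $d_{i\downarrow k}(u)$ and $d_{i\uparrow k}(u)$ are products of commuting, invertible factors, and that the shift identities (\ref{ed1}) and (\ref{ed 2}) of Corollary \ref{ed} relate $e_i$ at arguments differing by a unit.

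For the inductive step of (\ref{d arrow e 1}) (so $1\le i\le m$, $|i|=0$) I would peel off the top factor, writing $d_{i\downarrow k}(u)=d_i(u)\,d_{i\downarrow(k-1)}(u-1)$, the base point of the $(k-1)$-fold product being shifted to $u-1$ precisely so as to match the $e_i(u)$ on the right-hand side. The Leibniz rule then gives
\[
[d_{i\downarrow k}(u),e_i(v)]=[d_i(u),e_i(v)]\,d_{i\downarrow(k-1)}(u-1)+d_i(u)\,[d_{i\downarrow(k-1)}(u-1),e_i(v)].
\]
I would substitute (\ref{identities in Ymn-3}) into the first term and the induction hypothesis into the second. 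The key manoeuvre is to rewrite the stray factor $(e_i(v)-e_i(u))\,d_{i\downarrow(k-1)}(u-1)$, in which $e_i$ sits on the wrong side, by commuting $e_i(v)$ and $e_i(u)$ back through $d_{i\downarrow(k-1)}(u-1)$: the commutator with $e_i(v)$ is supplied by the induction hypothesis, while $[d_{i\downarrow(k-1)}(u-1),e_i(u)]$ is obtained by specialising the second spectral parameter to $v=u$ in the induction hypothesis — the same unit-shift specialisation that produces Corollary \ref{ed} from Lemma \ref{useful lemma 1}. After multiplying through by $(u-1-v)$ to clear denominators, the contributions of $e_i(u-1)$ cancel in a short telescoping identity and the coefficients of $e_i(v)$ and $e_i(u)$ collapse to $k$ and $-k$, producing $k\,d_{i\downarrow k}(u)(e_i(v)-e_i(u))$ exactly.

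The remaining three identities follow by running the identical induction, with two bookkeeping choices dictated by the data. First, the overall sign is inherited from the single-factor relation: (\ref{identities in Ymn-3}) and (\ref{identities in Ymn-4}) contribute the factor $(-1)^{|i|}$ and the sign of $\delta_{ij}-\delta_{i,j+1}$, which is why (\ref{d arrow e 2}) (where $|i|=1$) carries a global minus and why the $e_{i-1}$-relations (\ref{d arrow e 3}),(\ref{d arrow e 4}) display $(e_{i-1}(u)-e_{i-1}(v))$. Second, the direction of the product, $\downarrow$ versus $\uparrow$, must be matched to the shift identities of Corollary \ref{ed}: in the form $e_i(u-(-1)^{|i|})d_i(u)=d_i(u)e_i(u)$ from (\ref{ed1}) and $e_{i-1}(u+(-1)^{|i|})d_i(u)=d_i(u)e_{i-1}(u)$ from (\ref{ed 2}), so that one peels off $d_i(u)$ against $d_{i\downarrow(k-1)}(u-1)$ when the relevant shift is $-1$ and against $d_{i\uparrow(k-1)}(u+1)$ when it is $+1$. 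With these conventions fixed, the telescoping in each case is formally identical to that of (\ref{d arrow e 1}).

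I expect the main obstacle to be the bookkeeping rather than any deep difficulty. One must justify the unit-shift specialisation $v\mapsto u\mp1$ of the induction hypothesis (legitimate exactly as in the passage from Lemma \ref{useful lemma 1} to Corollary \ref{ed}), verify that the $e_i(u\mp1)$ contributions cancel in the telescoping sum, and — most delicately — keep the parity factors $(-1)^{|i|}$ and the choice of $\downarrow/\uparrow$ consistent across the four cases and across the $\gl_m$ and $\gl_n$ blocks. Since each $d_i(u)$ is even, the supercommutators reduce to ordinary commutators and the parity of $e_i$ enters only through the explicit sign $(-1)^{|i|}$ and the shift direction; in particular, no new phenomenon arises for $i=m$ or in characteristic $2$ here, in contrast to the $[e,e]$, $[f,f]$ and Serre relations treated earlier.
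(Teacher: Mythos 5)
Your proposal is correct and follows essentially the same route as the paper: an induction on $k$ that peels a single $d_i$ factor off the $k$-fold product, feeds in the one-factor relation (\ref{identities in Ymn-3}) together with the shift identities of Corollary \ref{ed}, and telescopes after clearing the extra denominator. The only cosmetic differences are that the paper peels the factor off the far end (writing, e.g., $d_{i\uparrow k+1}(u)=d_i(u+k)\,d_{i\uparrow k}(u)$ and multiplying a cleared-denominator form of the identity by $(u-v+k+1)d_i(u+k)$), so the commutation is supplied by the $k=1$ case shifted to $u+k$ rather than by the $v=u$ specialisation of the induction hypothesis, and that your passing citation of (\ref{identities in Ymn-4}) is unneeded, since all four base cases already follow from (\ref{identities in Ymn-3}) with $j=i$ or $j=i-1$.
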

\begin{proof}
We only prove (\ref{d arrow e 2}), as others can be treated similarly.
Actually, we will prove it in the following equivalent form:
\[ (\ast)\ \ \
(u-v+k)d_{i\uparrow k}(u)e_i(v)
= (u-v)e_i(v)
d_{i\uparrow k}(u)
+kd_{i\uparrow k}(u)e_i(u).
\]
This follows when $k=1$ from (\ref{new5}).
To prove $(\ast)$ in general,
we proceed by induction on $k$.
Given $(\ast)$ for some $k \geq 1$, multiply both sides on the left by
$(u-v+k+1)d_i(u+k)$ to deduce that:
\begin{align}\label{sssss}
(u-v+k+1)(u-v+k)d_{i\uparrow k+1}(u)e_i(v)&=(u-v)(u-v+k+1)d_i(u+k)e_i(v)
d_{i\uparrow k}(u)\nonumber \\
&+k(u-v+k+1)d_{i\uparrow k+1}(u)e_i(u).
\end{align}
Using the case of $k=1$ in $(\ast)$ and replacing $u$ by $u+k$ give that
$$(u-v+k+1)d_{i}(u+k)e_i(v)
= (u+k-v)e_i(v)
d_{i}(u+k)
+d_{i}(u+k)e_i(u+k).$$
Then substituting the above identity into (\ref{sssss}) and using (\ref{ed1}) we obtain $(\ast)$ with $k$ replaced by $k+1$,
as required.
\end{proof}

We shall consider more diagonal elements,
we let
\begin{equation}\label{definition h}
h_i(u)=\sum_{r \geq 0}h_i^{(r)}u^{-r}:=-(-1)^{|i|}d_{i+1}(u)d_i(u)^{-1}
\end{equation}
assuming $1\leq i\leq m+n-1$.
According to (\ref{di di' relation}), we have
$d_i'^{(0)}=1~$ and $d_i'^{(r)}=-\sum_{t=1}^rd_i^{(t)}d_i'^{(r-t)}$,
so that in particular $h_i^{(r+1)}\in {\rm F}_rY_{m|n}, h_i^{(0)}=-(-1)^{|i|}$ and $\gr_rd_i^{(r+1)}=-\gr_rd_i'^{(r+1)}$.
Moreover, the identification (\ref{identification}) yields:
\begin{equation}\label{grading of hi r+1}
\gr_r h_i^{(r+1)}=e_{i,i}x^r-(-1)^{|i|+|i+1|}e_{i+1,i+1}x^{r}.
\end{equation}
Note also by Corollary \ref{ed} that
\begin{equation}\label{h e comm relation}
h_i(u)e_i(u-(-1)^{|i|})=e_i(u+(-1)^{|i+1|})h_i(u).
\end{equation}

\begin{Lemma}\label{h e bracket relation}
The following relations hold in $Y_{m|n}[[u^{-1}, v^{-1}]]$:
\begin{align}
(u-v-(-1)^{|i|})[h_{i}(u),e_i(v)]&=(-1)^{|i+1|}2h_{i}(u)(e_i(u-(-1)^{|i|})-e_i(v)),\label{h e bracket relation 1}\\
(u-v+(-1)^{|i+1|})[h_{i}(u),e_i(v)]&=(-1)^{|i+1|}2(e_i(u+(-1)^{|i+1|})-e_i(v))h_{i}(u),\label{h e bracket relation 2}\\
(u-v)[h_{i-1}(u),e_i(v)] &=(-1)^{|i|}h_{i-1}(u)(e_i(v)-e_i(u)),\label{h e bracket relation 3}\\
(u-v-(-1)^{|i|})[h_{i-1}(u),e_i(v)] &=(-1)^{|i|}(e_i(v)-e_i(u-(-1)^{|i|}))h_{i-1}(u),\label{h e bracket relation 4}\\
(u-v)[h_{i+1}(u),e_i(v)] &=(-1)^{|i+1|}(e_i(v)-e_i(u))h_{i+1}(u),\label{h e bracket relation 5}\\
(u-v+(-1)^{|i+1|})[h_{i+1}(u),e_i(v)] &=(-1)^{|i+1|}h_{i+1}(u)(e_i(v)-e_i(u+(-1)^{|i+1|}))\label{h e bracket relation 6}.
\end{align}
\end{Lemma}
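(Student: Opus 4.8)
The plan is to insert the Gauss-decomposition formula $h_i(u)=-(-1)^{|i|}d_{i+1}(u)d_i(u)^{-1}$ from (\ref{definition h}) and expand every bracket by the super Leibniz rule, reducing each relation to the already-established brackets of the diagonal series $d_k(u)$ with the $e_i(v)$. The two inputs are (\ref{identities in Ymn-3}), which I use in the form $(u-v)[d_k(u),e_i(v)]=(-1)^{|k|}(\delta_{ki}-\delta_{k,i+1})d_k(u)(e_i(v)-e_i(u))$, together with the induced relation $[d_k(u)^{-1},e_i(v)]=-d_k(u)^{-1}[d_k(u),e_i(v)]d_k(u)^{-1}$. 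Because the $d_k(u)$ are even, no signs are created when the Leibniz rule is applied, so
\[(u-v)[d_{i+1}(u)d_i(u)^{-1},e_i(v)]=(u-v)[d_{i+1}(u),e_i(v)]\,d_i(u)^{-1}+d_{i+1}(u)\,(u-v)[d_i(u)^{-1},e_i(v)].\]

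I would first dispose of the off-diagonal relations (\ref{h e bracket relation 3}) and (\ref{h e bracket relation 5}). For $h_{i-1}(u)=-(-1)^{|i-1|}d_i(u)d_{i-1}(u)^{-1}$ relation (\ref{identities in Ymn-3}) gives $[d_{i-1}(u),e_i(v)]=0$, so only the $[d_i(u),e_i(v)]$-term survives; since $d_{i-1}(u)^{-1}$ commutes with $e_i$, the surviving factor passes through and the result is exactly $(-1)^{|i|}h_{i-1}(u)(e_i(v)-e_i(u))$, i.e.\ (\ref{h e bracket relation 3}). Relation (\ref{h e bracket relation 5}) is proved the same way using $[d_{i+2}(u),e_i(v)]=0$, the surviving factor now emerging on the left. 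To obtain the shifted partners (\ref{h e bracket relation 4}) and (\ref{h e bracket relation 6}), I transport $h_{i\mp1}(u)$ across $e_i$ at a shifted spectral parameter: (\ref{ed1}) together with $[d_{i-1}(u)^{-1},e_i(v)]=0$ yields $h_{i-1}(u)e_i(u)=e_i(u-(-1)^{|i|})h_{i-1}(u)$, and substituting this into (\ref{h e bracket relation 3}) rearranges it into the one-sided form (\ref{h e bracket relation 4}); (\ref{h e bracket relation 6}) follows symmetrically from (\ref{ed 2}).

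The diagonal relations (\ref{h e bracket relation 1}) and (\ref{h e bracket relation 2}) are where both $d$-brackets contribute. Adding the two Leibniz terms produces the overall factor $1+(-1)^{|i|+|i+1|}$; for $i\ne m$ this is $2$, matching the statement, and the sandwiched expression $d_{i+1}(u)(e_i(v)-e_i(u))d_i(u)^{-1}$ is converted into $h_i(u)$ acting on $e_i$ at shifted arguments via (\ref{ed1}), so the bare $(u-v)$-bracket becomes the shifted forms (\ref{h e bracket relation 1}) and (\ref{h e bracket relation 2}); that the two shifted forms agree is exactly the commutation identity (\ref{h e comm relation}). The main obstacle is the odd isotropic index $i=m$: there $e_m$ is odd, so (\ref{Drinfeld generators relation 4}) forces $[e_m^{(r)},e_m^{(s)}]=0$ and $(e_m^{(r)})^2=0$, and the coefficient $1+(-1)^{|m|+|m+1|}$ collapses to $0$, so this case has to be handled on its own terms, exactly as the $i=m$ branch of (\ref{new1}) in Lemma \ref{useful lemma 1} was, with the characteristic-two situation split off separately (where $Y_{m|n}\cong Y_{m+n}$ and one appeals to \cite[(4.30)]{BT18} and Remark \ref{rk2}). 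Organising these degenerations alongside the uniform generic computation is the delicate point; the remainder is routine sign-bookkeeping through the super Leibniz rule.
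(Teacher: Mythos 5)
Your proposal follows essentially the same route as the paper: expand $h_i$, $h_{i\pm 1}$ via the Gauss decomposition $h_j(u)=-(-1)^{|j|}d_{j+1}(u)d_j(u)^{-1}$, apply the (super-)Leibniz rule to reduce everything to the $d$--$e$ brackets (\ref{identities in Ymn-3}) (equivalently (\ref{new5})--(\ref{new8})), and pass between the symmetric $(u-v)$-form and the one-sided shifted forms using (\ref{ed1})--(\ref{ed 2}), i.e.\ (\ref{h e comm relation}); this is exactly how the paper treats (\ref{h e bracket relation 2}), (\ref{h e bracket relation 4}) and (\ref{h e bracket relation 5}). Your flag on $i=m$ is well placed and in fact sharper than the paper's own write-up: there the two Leibniz contributions cancel, giving $[h_m(u),e_m(v)]=0$, which for $p\neq 2$ is incompatible with the nonzero right-hand sides of (\ref{h e bracket relation 1})--(\ref{h e bracket relation 2}) as printed, so those two identities should be read as asserted only for $i\neq m$ --- consistent with the explicit case split in Corollary \ref{corollary: h e bracket}.
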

\begin{proof}
We prove (\ref{h e bracket relation 2}), (\ref{h e bracket relation 4}) and (\ref{h e bracket relation 5}) in detail here, while the others can be proved in a similar fashion.
To establish (\ref{h e bracket relation 2}), we have by (\ref{new4}) that
\begin{equation}\label{lemma zhong xuyao}
(u-v)[e_i(u), d_{i+1}(v)d_i(v)^{-1}]=(-1)^{|i+1|}2
d_{i+1}(v)(e_i(v)-e_i(u))d_i(v)^{-1}.
\end{equation}
From (\ref{new7}), we have that
$$(v-u+(-1)^{|i+1|})d_{i+1}(v)e_i(u)=(v-u)e_i(u)d_{i+1}(v)+(-1)^{|i+1|}d_{i+1}(v)e_i(v).$$
Then we multiply (\ref{lemma zhong xuyao}) by $(v-u+(-1)^{|i+1|})$ and use these identities
in combination with (\ref{ed 2}) to obtain
$$
(u-v)(v-u+(-1)^{|i+1|})[e_i(u),h_i(v)]=(-1)^{|i+1|}2(v-u)(e_i(v+(-1)^{|i+1|})-e_i(u))h_i(v).
$$
Dividing through by $(u-v)$ and interchanging $u$ and $v$ give the result.
For (\ref{h e bracket relation 4}), the Leibniz rule for $\ad e_i(v)$ in conjunction with (\ref{identities in Ymn-3}) implies that
$$(u-v-(-1)^{|i|})[d_i(u)d_{i-1}(u)^{-1},e_i(v)]=((v-u+(-1)^{|i|}))[e_i(v),d_i(u)]d_{i-1}(u)^{-1}.$$
Substituting the last bracket by (\ref{new5}) and simplifying the result, we have
$$(u-v-(-1)^{|i|})[d_i(u)d_{i-1}(u)^{-1},e_i(v)]=(-1)^{|i|}(e_i(v)d_i(u)-d_i(u)e_i(u))d_{i-1}(u)^{-1}.$$
Then the assertion follows from applying (\ref{ed1}) to the above equality.
Finally, for (\ref{h e bracket relation 5}),
this follows easily from (\ref{new8}) and the Leibniz rule, using that $d_{i+2}(u)$ commutes with $e_i(v)$ by (\ref{identities in Ymn-3}).
\end{proof}

\begin{Corollary}\label{corollary: h e bracket}
The following hold in $Y_{m|n}[[u^{-1}, v^{-1}]]$:
\begin{align}\label{coro 1}
(u-v)[h_i(u),e_i(v)]=\left\{\begin{array}{ll}
(-1)^{|i+1|}(2h_i(u)e_i(u-(-1)^{|i|})-h_i(u)e_i(v)-e_i(v)h_i(u)),&i\neq m;\\
0,&i=m;\\
\end{array}\right.
\end{align}
\begin{align}\label{coro2}
\hspace{-15mm}(u-v)\Big[h_{i-1}\big(u+\halfa\big),e_i(v)\Big]&=
\halfa \Big(h_{i-1}(u+\halfa)e_i(v)
+e_i(v)h_{i-1}\big(u+\halfa\big)\Big)\notag\\
&\hspace{-55mm}- \halfa \Big(h_{i-1}\big(u+\halfa\big)e_i\big(u+\halfa\big)+e_i\big(u-\halfa\big)h_{i-1}\big(u+\halfa\big)\Big),
\end{align}
\begin{align}\label{coro3}
\hspace{-3mm}(u-v)\Big[h_{i+1}\big(u-\halfb\big),e_i(v)\Big] &=
\halfb\Big(h_{i+1}(u-\halfb)e_i(v)
+e_i(v)h_{i+1}\big(u-\halfb\big)\Big)\notag\\
&\hspace{-55mm}- \halfb\Big(h_{i+1}\big(u-\halfb\big)e_i\big(u+\halfb\big)+ e_i\big(u-\halfb\big)h_{i+1}\big(u-\halfb\big)\Big),
\end{align}
assuming that $\Char\kk \neq 2$ for the last two.
\end{Corollary}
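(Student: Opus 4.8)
The plan is to obtain all three displays of Corollary~\ref{corollary: h e bracket} as formal consequences of Lemma~\ref{h e bracket relation}, by taking symmetric combinations of its six relations and then rewriting shifted products via the commutation identity~(\ref{h e comm relation}). For~(\ref{coro 1}) I would separate the generic case $i\neq m$ from the boundary case $i=m$. When $i\neq m$ one has $|i|=|i+1|$, so the prefactors $u-v-(-1)^{|i|}$ and $u-v+(-1)^{|i+1|}$ of~(\ref{h e bracket relation 1}) and~(\ref{h e bracket relation 2}) are $u-v\mp(-1)^{|i|}$; adding the two relations makes the left-hand sides combine to $2(u-v)[h_i(u),e_i(v)]$. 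After dividing by $2$, the two shifted terms $h_i(u)e_i(u-(-1)^{|i|})$ and $e_i(u+(-1)^{|i+1|})h_i(u)$ are equal by~(\ref{h e comm relation}), so they merge into $2h_i(u)e_i(u-(-1)^{|i|})$, which is exactly the asserted formula.

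The case $i=m$ is the one genuine subtlety. Since $|m|\neq|m+1|$ the two prefactors now \emph{coincide}, both being $u-v-1$, so~(\ref{h e bracket relation 1}) and~(\ref{h e bracket relation 2}) read that $(u-v-1)[h_m(u),e_m(v)]$ equals $-2h_m(u)(e_m(u-1)-e_m(v))$ and $-2(e_m(u-1)-e_m(v))h_m(u)$ respectively. Subtracting and using $[h_m(u),e_m(u-1)]=0$, a special case of~(\ref{h e comm relation}), gives $2[h_m(u),e_m(v)]=0$; hence the supercommutator vanishes when $p\neq 2$. For $p=2$ the factor $2$ already annihilates the right-hand side of~(\ref{h e bracket relation 1}), and since multiplication by $u-v-1$ is injective on $Y_{m|n}[[u^{-1},v^{-1}]]$ (an elementary coefficient comparison, as there are no positive powers of $u,v$ to absorb the shift) we again conclude $[h_m(u),e_m(v)]=0$. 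Either way $(u-v)[h_m(u),e_m(v)]=0$, so~(\ref{coro 1}) holds for all $p$.

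For~(\ref{coro2}) and~(\ref{coro3}) I would substitute a half-integer shift of the spectral parameter into the relevant pair of relations and then average, which is precisely where $\Char\kk\neq 2$ is needed (the shift $\halfa$ must exist). Concretely, replacing $u$ by $u+\halfa$ in~(\ref{h e bracket relation 3}) and~(\ref{h e bracket relation 4}) turns their prefactors into $u-v+\halfa$ and $u-v-\halfa$, whose sum is $2(u-v)$, while the shifted evaluation points become exactly $e_i(u+\halfa)$ and $e_i(u-\halfa)$; adding and dividing by $2$ yields~(\ref{coro2}). Display~(\ref{coro3}) is obtained in the same way, replacing $u$ by $u-\halfb$ in~(\ref{h e bracket relation 5}) and~(\ref{h e bracket relation 6}), after checking that $u-\halfb+(-1)^{|i+1|}=u+\halfb$ so the right-hand shift lands on $e_i(u+\halfb)$.

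The main obstacle is bookkeeping rather than conceptual: one must verify that after each prescribed shift the two prefactors are genuinely negatives of one another about $u-v$, and that the shifted arguments $u\pm(-1)^{\bullet}/2$ match the $e_i(\cdot)$ appearing in the target right-hand sides, all while carefully tracking the sign factors $(-1)^{|i|}$ and $(-1)^{|i+1|}$. The only step that does not reduce to this routine averaging is the degenerate $i=m$ case of~(\ref{coro 1}), where the symmetric combination collapses and vanishing must instead be argued directly, with the split between $p\neq 2$ and $p=2$ indicated above.
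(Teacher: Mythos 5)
Your proposal is correct and follows essentially the same route as the paper: (\ref{coro2}) and (\ref{coro3}) are obtained by averaging (\ref{h e bracket relation 3})+(\ref{h e bracket relation 4}) and (\ref{h e bracket relation 5})+(\ref{h e bracket relation 6}) after the half-integer shift of $u$, and (\ref{coro 1}) by averaging (\ref{h e bracket relation 1}) and (\ref{h e bracket relation 2}) and merging the shifted terms via (\ref{h e comm relation}), with the $i=m$ degeneration handled by comparing the two coinciding left-hand sides. The only point you leave uncovered is the subcase $i\neq m$, $\Char\kk=2$ of (\ref{coro 1}) (the hypothesis $\Char\kk\neq 2$ applies only to the last two displays), where your division by $2$ is unavailable; but the same observation you already use for $i=m$, $p=2$ --- the right-hand side of (\ref{h e bracket relation 1}) vanishes, so $[h_i(u),e_i(v)]=0$ by injectivity of multiplication by $u-v-(-1)^{|i|}$, while the claimed right-hand side of (\ref{coro 1}) also reduces to $[h_i(u),e_i(v)]$ in characteristic $2$ --- closes it, exactly as in the paper.
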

\begin{proof}
Suppose that $\Char\kk\neq 2$.
The relations (\ref{coro2}) and (\ref{coro3}) follow by averaging
the corresponding pairs identities from Lemma~\ref{h e bracket relation}, e.g.
(\ref{coro2}) is
$($(\ref{h e bracket relation 3})+(\ref{h e bracket relation 4})$)/2$ with $u$ replaced by $u+\halfa$.
For (\ref{coro 1}), we note that $(-1)^{|i|}=(-1)^{|i+1|}$ when $i\neq m$.
When combined with (\ref{h e comm relation}), the first equality follows by
averaging (\ref{h e bracket relation 1}) and (\ref{h e bracket relation 2}).
If $i=m$, then the right hand side of (\ref{h e bracket relation 1}) and (\ref{h e bracket relation 2}) are equal.
In conjunction with (\ref{h e comm relation}) this implies $[h_i(u),e_i(v)]=0$.
To establish (\ref{coro 1}) when $\Char\kk=2$,
we observe by (\ref{h e bracket relation 1}) that $[h_i(u),e_i(v)]=0$,
which easily implies the desired identity.
\end{proof}

\subsection{Automorphisms}\label{Section Automorphisms}
We list the following (anti)automorphisms of $Y_{m|n}$ which are needed in the next section;
see \cite[Proposition 1.12]{MNO96} and \cite[(5.22)]{Peng21}.
\begin{enumerate}
\item(``Multiplication by a power series'')
For any power series $f(u) \in 1+u^{-1} \kk[[u^{-1}]]$, there
is an automorphism $\mu_f:Y_{m|n}\rightarrow Y_{m|n}$
defined from $\mu_f(T(u))=f(u)T(u)$,
On Drinfeld generators, it is easy to show by induction and (\ref{gauss decomp}) that
$\mu_f(d_i(u))=f(u)d_i(u)$,
$\mu_f(e_{j}(u))=e_j(u)$ and $\mu_f(f_j(u))=f_j(u)$.
\item(``Transposition'')
By the presentation of $Y_{m|n}$, there is an anti-automorphism $\tau:Y_{m|n}\rightarrow Y_{m|n}$ of order $2$ defined by
$\tau(d_i^{(r)})=d_i^{(r)}, \tau(e_j^{(r)})=f_j^{(r)},
\tau(f_j^{(r)})=e_j^{(r)}$.
\item(``Permutation'')
Let $S_{m+n}$ be the Symmetric group on $m+n$ objects and let $S_m\times S_n\subseteq S_{m+n}$ denote its Young subgroup associated to $(m,n)$.
Suppose that $p\neq 2$.
For each $w\in S_m\times S_n\subseteq S_{m+n}$, there is an automorphism
$w:Y_{m|n}\rightarrow Y_{m|n}$ sending $t_{i,j}^{(r)} \mapsto t_{w(i),
  w(j)}^{(r)}$. This is clear from the RTT relation (\ref{RTT relations}) (see also \cite[Section 2]{Peng14} and \cite[Lemma 2.24]{Tsy20}).
If $p=2$, then each element $w\in S_{m+n}$ gives rise to an automorphism (\cite[Section 4.5(5)]{BT18}).
\item(``Translation'')
For $c\in\kk$, there is an automorphism $\eta_c:\Ymn\rightarrow\Ymn$ defined from
$\eta_c(t_{i,j}(u))=t_{i,j}(u-c)$, i.e. $\eta_c(t_{i,j}^{(r)}) =
\sum_{s=1}^{r} \binom{r-1}{r-s} c^{r-s} t_{i,j}^{(s)}$.
In terms of Drinfeld generators, $\eta_c$ sends $d_i(u) \mapsto d_i(u-c), e_{i,j}(u) \mapsto
e_{i,j}(u-c)$ and $f_{j,i}(u) \mapsto f_{j,i}(u-c)$.
This can be easily checked by the relations (\ref{tiju tkl relation}) and (\ref{quasideterminants D})-(\ref{quasideterminants F}).
\end{enumerate}
\begin{Lemma}\cite[Lemma 4.16]{BT18}\label{permutation auto}
Suppose that $p>2$.
For $1\leq i<j\leq m+n$ and $|i|+|j|=0$,
the permutation automorphism of $Y_{m|n}$ defined by the transposition $(i+1,j)$ maps $e_i(u)\mapsto e_{i,j}(u)$ and $f_i(u)\mapsto f_{j,i}(u)$.
\end{Lemma}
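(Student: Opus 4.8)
The plan is to read both statements directly off the quasideterminant formulas \eqref{quasideterminants E} and \eqref{quasideterminants F}, exploiting that the permutation automorphism is an \emph{algebra} automorphism and hence commutes with every noncommutative rational expression built from the $t_{a,b}(u)$. First I would check that $w:=(i+1,j)$ genuinely defines such an automorphism. The hypothesis $|i|+|j|=0$ forces $i+1$ and $j$ to lie in the same block $\{1,\dots,m\}$ or $\{m+1,\dots,m+n\}$: if $i<j\leq m$ then $i+1\leq m$, while if $m<i<j$ then $m<i+1$. Hence $w\in S_m\times S_n$, so for $p>2$ the permutation automorphism of Section~\ref{Section Automorphisms}(3) applies, acting by $t_{a,b}^{(r)}\mapsto t_{w(a),w(b)}^{(r)}$. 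If $j=i+1$ then $w=\mathrm{id}$ and there is nothing to prove, so I assume $j>i+1$. Extending $w$ coefficientwise to $Y_{m|n}[[u^{-1}]]$, it sends $t_{a,b}(u)\mapsto t_{w(a),w(b)}(u)$ and, being an algebra homomorphism, commutes with matrix multiplication and inversion, hence with the formation of all the quasideterminants in \eqref{quasideterminants D}--\eqref{quasideterminants F}.

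Second, I would apply $w$ to $e_i(u)=e_{i,i+1}(u)$ as given by \eqref{quasideterminants E} with $j$ replaced by $i+1$; write $M$ for the $i\times i$ matrix appearing there. The factor $d_i(u)$ is the quasideterminant \eqref{quasideterminants D} of the leading $i\times i$ submatrix, whose indices $1,\dots,i$ are all fixed by $w$, so $w(d_i(u))=d_i(u)$. In the remaining quasideterminant the rows $1,\dots,i$ and the columns $1,\dots,i-1$ are fixed by $w$, while the single extra column label $i+1$ is sent to $j$; consequently $w$ carries $M$ entrywise onto precisely the matrix appearing in \eqref{quasideterminants E} for $e_{i,j}(u)$ (its last column becomes $(t_{1,j}(u),\dots,t_{i,j}(u))^{\mathrm t}$ and the boxed entry becomes $t_{i,j}(u)$). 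Therefore $w(e_i(u))=d_i(u)^{-1}|w(M)|=e_{i,j}(u)$. The claim $w(f_i(u))=f_{j,i}(u)$ follows identically from \eqref{quasideterminants F}: now the columns $1,\dots,i$ and the rows $1,\dots,i-1$ are fixed while the bottom row label $i+1$ is sent to $j$, turning the defining matrix of $f_{i+1,i}(u)$ into that of $f_{j,i}(u)$, and again $w(d_i(u)^{-1})=d_i(u)^{-1}$.

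The point requiring care is the interaction of $w$ with the super structure. Because $w$ preserves the block of every index, it preserves each parity $|a|$, so the parities of all entries $t_{a,b}(u)$ occurring in \eqref{quasideterminants D}--\eqref{quasideterminants F} are unchanged; since these quasideterminants are assembled only from products and inverses of such entries, the identity $w(|M|)=|w(M)|$ holds verbatim, with no spurious sign. I expect this bookkeeping --- confirming that transporting the extra index $i+1$ to $j$ lands it in exactly the right column (resp.\ row) of the quasideterminant --- to be the only genuinely delicate step; everything else is a direct substitution into the Gauss-decomposition formulas and mirrors \cite[Lemma 4.16]{BT18}.
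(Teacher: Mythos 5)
Your proposal is correct and takes exactly the route the paper intends: the paper's proof is the single line ``This follows from (\ref{quasideterminants D})--(\ref{quasideterminants F})'', and you have simply filled in the details --- checking that $(i+1,j)\in S_m\times S_n$ under the hypothesis $|i|+|j|=0$, that the automorphism commutes with quasideterminant formation, and that relabelling the column (resp.\ row) index $i+1$ as $j$ converts the defining matrix of $e_i(u)$ (resp.\ $f_i(u)$) into that of $e_{i,j}(u)$ (resp.\ $f_{j,i}(u)$) while fixing $d_i(u)$. No gaps.
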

\begin{proof}
This follows from (\ref{quasideterminants D})-(\ref{quasideterminants F}).
\end{proof}

\section{The center of $Y_{m|n}$}\label{Section:center}
In this section, we will describe the center of the modular super Yangian $\Ymn$,
and give precise formulas for the generators.

\subsection{Harish-Chandra center}
Following \cite{Na91}, we define the {\it quantum Berezinian} of the matrix $T(u)$ as the following power series:
\begin{eqnarray*}
c(u)&:=&\sum_{\rho\in S_m}\sgn(\rho)t_{\rho(1),1}(u)t_{\rho(2),2}(u-1)\cdots t_{\rho(m),m}(u-m+1)\nonumber\\
&\times &\sum_{\sigma\in S_n}\sgn(\sigma)t'_{m+1,m+\sigma(1)}(u-m+1)\cdots t'_{m+n,m+\sigma(n)}(u-m+n).
\end{eqnarray*}
Thanks to \cite[Theorem 1]{Gow05},
we may also write the quantum Berezinian as follows:
\begin{eqnarray}\label{definition c}
c(u)&=&d_1(u)d_2(u-1)\cdots d_m(u-m+1)\nonumber\times d_{m+1}(u-m+1)^{-1}\cdots d_{m+n}(u-m+n)^{-1}\nonumber\\
&=:&1+\sum\limits_{r\geq 1}c^{(r)}u^{-r}.
\end{eqnarray}
The algebra generated by the coefficients $\{c^{(r)};~r\geq 1\}$ will be denoted by $Z_\HC(Y_{m|n})$.
We call it the {\it Harish-Chandra center} of $Y_{m|n}$.
\begin{Proposition}\label{HC center of Ymn}
The elements $\{c^{(r)};~r\geq 1\}$ are central.
Furthermore, we have that $c^{(r)}$ has degree $r-1$ with respect to the loop filtration and ${\rm gr}_{r-1}c^{(r)}=z_{r-1}\in U(\fg)$.
Hence, $c^{(1)}, c^{(2)},\dots$ are algebraically independent.
\end{Proposition}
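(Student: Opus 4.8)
The plan is to establish the three assertions separately: first centrality of the coefficients $c^{(r)}$, then the computation of their leading symbols for the loop filtration, and finally algebraic independence, which will follow formally from the symbol computation together with the independence of the $z_r$ recorded in Section~\ref{section: current Lie superalgebra}.

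For centrality I would reduce to characteristic zero, where the statement is classical (\cite{Na91}, \cite[Theorem 2]{Gow05}, \cite[Theorem 4]{Gow07}). The RTT relations (\ref{RTT relations}) have structure constants in $\ZZ$, so one forms the super Yangian $Y^{\ZZ}_{m|n}$ over $\ZZ$ by the same presentation, and for any ring $R$ one has $Y^{\ZZ}_{m|n}\otimes_\ZZ R\cong Y^{R}_{m|n}$. Because each leading principal minor of $T(u)$ has constant term $1$, the Gauss decomposition and the quasideterminant formulas (\ref{quasideterminants D})--(\ref{quasideterminants F}) have coefficients in $Y^{\ZZ}_{m|n}$, and likewise every $d_i(u)^{-1}$ expands with integral coefficients; hence the quantum Berezinian (\ref{definition c}) and all $c^{(r)}$ already lie in $Y^{\ZZ}_{m|n}$. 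The arguments of Theorem~\ref{PBW theorem Ymn} and Lemma~\ref{lemma U(g) isomorphic to graded of loop filtration} apply verbatim over $\ZZ$, so ordered supermonomials form a $\ZZ$-basis and $Y^{\ZZ}_{m|n}$ is $\ZZ$-free; consequently $Y^{\ZZ}_{m|n}\hookrightarrow Y^{\ZZ}_{m|n}\otimes_\ZZ\CC=Y^{\CC}_{m|n}$. Since $c^{(r)}$ is central over $\CC$, each commutator $[c^{(r)},t_{i,j}^{(s)}]$ vanishes in $Y^{\CC}_{m|n}$, hence already in $Y^{\ZZ}_{m|n}$; base-changing along $\ZZ\to\kk$ shows $c^{(r)}$ commutes with all RTT generators of $Y_{m|n}=Y^{\ZZ}_{m|n}\otimes_\ZZ\kk$, so $c^{(r)}\in Z(Y_{m|n})$. (A direct check from the Drinfeld presentation is also possible: $c(u)$ commutes with every $d_i(v)$ by (\ref{identities in Ymn-1}), and commutation with $e_j(v)$ and $f_j(v)$ comes down, via (\ref{identities in Ymn-3}) and Corollary~\ref{ed}, to cancellation between the two adjacent factors $d_j,d_{j+1}$ of $c(u)$ whose arguments differ by $1$; this route is more delicate owing to the signs attached to the odd indices near $m$.)

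For the symbols I would pass to the loop filtration. In $c(u)=\prod_{i=1}^m d_i(u-i+1)\prod_{k=1}^n d_{m+k}(u-m+k)^{-1}$ the coefficient of $u^{-r}$ is a sum of products of the shifted coefficients of the factors, with total index-sum $r$; since $\deg d_i^{(s)}=\deg d_i'^{(s)}=s-1$ and the translation $u\mapsto u-c$ does not raise filtration degree, a product of $t$ such coefficients has degree $r-t\le r-1$, with equality only for the linear terms $t=1$. Thus $c^{(r)}\in{\rm F}_{r-1}Y_{m|n}$ and $\gr_{r-1}c^{(r)}$ is the sum, over the $m+n$ factors, of the leading symbol of the coefficient of $u^{-r}$ in that single factor. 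By (\ref{identification}) one has $\gr_{r-1}d_i^{(r)}=(-1)^{|i|}e_{i,i}x^{r-1}$, and (\ref{di di' relation}) gives $\gr_{r-1}d_i'^{(r)}=-(-1)^{|i|}e_{i,i}x^{r-1}$. The factors with $i\le m$ (so $|i|=0$) therefore contribute $e_{i,i}x^{r-1}$, while the inverted factors with $i=m+k$ (so $|i|=1$) contribute $-(-1)^{1}e_{i,i}x^{r-1}=e_{i,i}x^{r-1}$; summing yields $\gr_{r-1}c^{(r)}=\sum_{i=1}^{m+n}e_{i,i}x^{r-1}=z_{r-1}$. In particular this symbol is nonzero, so $c^{(r)}$ has degree exactly $r-1$. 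Algebraic independence then follows formally: the $z_r$ span a polynomial subalgebra $\kk[z_r;r\ge0]$ of $U(\fg)$ (Theorem~\ref{center of enveloping algebra of shifted current algebra}), so the symbols $\{z_{r-1}\}$ are algebraically independent. Weighting a monomial $\prod_r (c^{(r)})^{a_r}$ by $\sum_r a_r(r-1)$, no leading cancellation occurs since the symbols lie in the domain $\kk[z_r;r\ge0]$, so this weight equals the filtration degree and $\gr$ of the top-weight part of any $P(c^{(1)},c^{(2)},\dots)$ equals the corresponding nonzero combination of distinct monomials $\prod_r z_{r-1}^{a_r}$; hence $P(c^{(1)},c^{(2)},\dots)\neq0$ whenever $P\neq0$.

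The substantive point, and the step I expect to be the main obstacle, is centrality: once it is known, the symbol computation and the independence argument are routine. The integral reduction sidesteps a lengthy positive-characteristic computation, at the cost of checking integrality of the quasideterminant coefficients and the integral PBW basis; the alternative Drinfeld-presentation verification is exactly where the positive-characteristic sign subtleties for the odd generators would need careful handling.
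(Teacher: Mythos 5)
Your proposal is correct, but the centrality step takes a genuinely different route from the paper. The paper stays entirely in characteristic $p$: using (\ref{di dj commu}) and the anti-automorphism $\tau$ it reduces centrality to the single identity $[c(u),e_i(v)]=0$, which it then verifies directly "in the same manner as" the characteristic-zero computations of Brundan--Kleshchev and Gow, using (\ref{identities in Ymn-3}) — i.e.\ exactly the cancellation between the adjacent factors $d_i(u-\cdot)$ and $d_{i+1}(u-\cdot)$ that you sketch parenthetically as the alternative. You instead reduce to characteristic zero via the integral form $Y^{\ZZ}_{m|n}$: the RTT relations have integer structure constants, the quasideterminant coefficients and hence the $c^{(r)}$ are integral because every relevant leading minor has constant term $1$, and the ordered supermonomials give a $\ZZ$-basis (spanning over $\ZZ$ from the relations, independence inherited from the known $\CC$-basis), so $Y^{\ZZ}_{m|n}$ embeds in $Y^{\CC}_{m|n}$ and base-changes to $Y_{m|n}$ over $\kk$; vanishing of $[c^{(r)},t_{i,j}^{(s)}]$ over $\CC$ then forces it over $\kk$. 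This is sound and sidesteps the positive-characteristic sign bookkeeping near $i=m$, at the price of the integrality and integral-PBW checks (which, as you note, are routine given the $\CC$-PBW theorem). For the symbol, the paper quotes Gow's expansion $c^{(r)}=t_{1,1}^{(r)}+\cdots+t_{m,m}^{(r)}-t_{m+1,m+1}^{(r)}-\cdots-t_{m+n,m+n}^{(r)}+(\text{lower degree})$ and applies Lemma \ref{lemma U(g) isomorphic to graded of loop filtration}, whereas you expand the product (\ref{definition c}) factor by factor and use (\ref{identification}) together with $\gr_{r-1}d_i'^{(r)}=-\gr_{r-1}d_i^{(r)}$; both give $\gr_{r-1}c^{(r)}=z_{r-1}$, and your closing independence argument matches the paper's.
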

\begin{proof}
Using (\ref{di dj commu}) and the anti-automorphism $\tau$,
we just need to check $[c(u),e_i(v)]=0$ for all $i$.
This can be proven in the same manner as \cite[Theorem 7.2]{BK05} and \cite[Theorem 2]{Gow05} using the relation (\ref{identities in Ymn-3}).
By the proof of \cite[Theorem 4]{Gow07}, we have
$$c^{(r)}=t_{1,1}^{(r)}+\cdots+t_{m,m}^{(r)}-t_{m+1,m+1}^{(r)}-\cdots-t_{m+n,m+n}^{(r)}+{\rm terms~of~lower~degree}.$$
Consequently, ${\rm gr}_{r-1}c^{(r)}=z_{r-1}$ (see Lemma \ref{lemma U(g) isomorphic to graded of loop filtration}).
The final assertion follows because $z_0,z_1,\dots $ are algebraically independent in $U(\fg)$.
\end{proof}

\subsection{\boldmath Off-diagonal $p$-central elements}\label{subsection: off-diagonal p-central elements}
This subsection is a super generalization of \cite[Section 5.2]{BT18}.
We may assume $p>2$ because the case $p=2$ has been considered in {\it loc. cit}.
In this subsection,
we investigate the $p$-central elements that lie in the ``even root subalgebras''
$Y_{i,j}^{+}, Y_{j,i}^{-}\subseteq Y_{m|n}$ for $1\leq i<j\leq m+n$ and $|i|+|j|=0$,
that is, the subalgebras generated by $\{e_{i,j}^{(r)};~r>0, |i|+|j|=0\}$ and $\{f_{j,i}^{(r)};~r>0, |i|+|j|=0\}$, respectively.

\begin{Lemma}\label{root power series central}
For $1\leq i<j\leq m+n$ and $|i|+|j|=0$,
all coefficients in the power series $(e_{i,j}(u))^p$ and $(f_{j,i}(u))^p$ belong to $Z(Y_{m|n})$.
\end{Lemma}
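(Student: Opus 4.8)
The plan is to reduce everything to showing that the coefficients of $(e_i(u))^p$ are central for a single even \emph{simple} root vector, and then to transport the result by the (anti)automorphisms of Section~\ref{Section Automorphisms}. By Lemma~\ref{permutation auto} the transposition $(i+1,j)$ induces an automorphism of $Y_{m|n}$ sending $e_i(u)\mapsto e_{i,j}(u)$ and $f_i(u)\mapsto f_{j,i}(u)$; since automorphisms preserve $Z(Y_{m|n})$, and since $|i|+|j|=0$ forces $i\neq m$ (both indices lie in $\{1,\dots,m\}$, or both in $\{m+1,\dots,m+n\}$), it is enough to treat $(e_i(u))^p$ and $(f_i(u))^p$ with $i\neq m$. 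The transposition anti-automorphism $\tau$ fixes each $d_i^{(r)}$ and interchanges $e_i^{(r)}\leftrightarrow f_i^{(r)}$, so $\tau\big((e_i(u))^p\big)=(f_i(u))^p$; as $\tau$ preserves the centre, the $f$-statement follows from the $e$-statement. Thus it suffices to prove that $(e_i(u))^p$ commutes with every Drinfeld generator $d_k(v),e_k(v),f_k(v)$ for $i\neq m$ (the $d_k'^{(r)}$ being determined by the $d_k^{(r)}$ through (\ref{di di' relation})).

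The engine is a characteristic-$p$ identity. Put $A:=e_i(u)$, which is \emph{even} as $i\neq m$. From $L_A^p=\sum_{n=0}^{p}\binom{p}{n}\ad_A^{\,n}R_A^{\,p-n}$ and $\binom{p}{n}\equiv 0$ for $0<n<p$ we get $[A^p,X]=\ad_A^{\,p}(X)$ for any $X$, so centrality of $(e_i(u))^p$ amounts to $\ad_{e_i(u)}^{\,p}(X)=0$ on generators. The far cases are immediate: $\ad_{e_i(u)}(e_k(v))=0$ for $|k-i|>1$ by (\ref{identities in Ymn-2}), $\ad_{e_i(u)}(f_k(v))=0$ for $k\neq i$ by (\ref{identities in Ymn-5}), and $\ad_{e_i(u)}(d_k(v))=0$ for $k\neq i,i+1$ by (\ref{identities in Ymn-3}).

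For the surviving cases I compute the iterated adjoint. Writing $B:=e_i(v)-e_i(u)$, $G:=-B$ and $c:=(-1)^{|i|}(u-v)^{-1}$ (a central scalar), relation (\ref{identities in Ymn-6}) gives $[A,B]=cB^2$, hence $\ad_A^{\,n}(B)=n!\,c^{n}B^{\,n+1}$ and $\ad_A^{\,p}(e_i(v))=p!\,c^{p}B^{\,p+1}=0$. Relation (\ref{new5}) gives $\ad_A(d_i(v))=c\,d_i(v)G$, and since $\ad_A(G)=-cG^2$ one checks directly that $\ad_A^{\,2}(d_i(v))=0$. Relation (\ref{new7}) gives $\ad_A(d_{i+1}(v))=-c\,d_{i+1}(v)G$, and induction yields $\ad_A^{\,n}(d_{i+1}(v))=(-1)^{n}n!\,c^{n}d_{i+1}(v)G^{n}$, so again $\ad_A^{\,p}=0$. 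For the neighbours $e_{i\pm1}(v)$ I use the Serre relations: bracketing \emph{against the even generator} $e_i$ introduces no super-signs, so (\ref{Drinfeld generators relation 8}) and (\ref{Drinfeld generators relation 10}) translate into $\ad_{e_i^{(s)}}\ad_{e_i^{(t)}}(e_{i\pm1}^{(r)})+\ad_{e_i^{(t)}}\ad_{e_i^{(s)}}(e_{i\pm1}^{(r)})=0$ and $\ad_{e_i^{(t)}}^{\,2}(e_{i\pm1}^{(r)})=0$; summing over $s+t=N$ the off-diagonal terms cancel in pairs and the diagonal vanishes, whence $\ad_{e_i(u)}^{\,2}(e_{i\pm1}(v))=0$ and (as $p>2$) $\ad_{e_i(u)}^{\,p}(e_{i\pm1}(v))=0$. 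This step is precisely where the odd generators enter, namely when $i\pm1=m$.

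The one remaining and hardest generator is $f_i(v)$. By (\ref{identities in Ymn-5}) and (\ref{definition h}), $\ad_{e_i(u)}(f_i(v))=-(u-v)^{-1}\big(h_i(u)-h_i(v)\big)$ is a difference of the diagonal currents $h_i$, so iterating $\ad_{e_i(u)}$ forces one to evaluate it at a coincident argument, where (\ref{coro 1}) gives $\ad_{e_i(u)}(h_i(u))=2h_i(u)\big(e_i(u-(-1)^{|i|})-e_i(u)\big)$; controlling the resulting iterated adjoint is the main obstacle. The clean way past it is a rank-one reduction: for $i\neq m$ the currents $d_i(u),d_{i+1}(u),e_i(u),f_i(u)$ satisfy precisely the Drinfeld relations of the ordinary Yangian $Y_2$, giving a homomorphism $Y_2\to Y_{m|n}$ under which $e(u)\mapsto e_i(u)$ and $f(u)\mapsto f_i(u)$; the coefficients of $(e(u))^p$ are central in $Y_2$ by \cite[Section~5.2]{BT18}, so $[(e_i(u))^p,f_i(v)]=0$. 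This exhausts the generators, proving $(e_i(u))^p\in Z(Y_{m|n})$; applying $\tau$ gives $(f_i(u))^p\in Z(Y_{m|n})$, and then Lemma~\ref{permutation auto} upgrades both to the general off-diagonal elements $(e_{i,j}(u))^p$ and $(f_{j,i}(u))^p$.
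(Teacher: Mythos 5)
Your proposal is correct, and its skeleton coincides with the paper's: reduce via Lemma~\ref{permutation auto} and the anti-automorphism $\tau$ to showing that $(e_i(u))^p$ is central for a simple even root ($i\neq m$), then use the characteristic-$p$ identity $[A^p,X]=\ad_A^{\,p}(X)$ for even $A$ to reduce to killing the Drinfeld generators. Where you differ is in how the identities $\ad_{e_i(u)}^{\,p}=0$ are established. The paper first reduces further to $1\leq i\leq m-1$ by the swap map $\zeta_{m|n}$ and $\tau$, then disposes of every generator with indices in $\{1,\dots,m\}$ in one stroke via the embedding $Y_m\hookrightarrow Y_{m|n}$ and \cite[Lemma 5.2]{BT18}, leaving only the single new case $(\ad e_{m-1}(u))^p(e_m(v))=0$, which is immediate from (\ref{identities in Ymn-10}). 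You instead compute the iterated adjoints on $e_i(v)$, $d_i(v)$, $d_{i+1}(v)$ and $e_{i\pm1}(v)$ from scratch --- in effect re-deriving Lemma~\ref{useful lemma 1} and the consequence of the Serre relations --- and invoke \cite{BT18} only for the one hard generator $f_i(v)$ via a rank-one reduction. Your computations check out (read each identity involving $(u-v)^{-1}$ as cleared of denominators, as in Lemma~\ref{useful lemma 1}: every right-hand side is divisible by the relevant power of $u-v$, and multiplication by $u-v$ is injective on $Y_{m|n}[[u^{-1},v^{-1}]]$), and your uniform Serre-relation treatment of the neighbours $e_{i\pm1}(v)$ covers the genuinely new super case $i\pm1=m$ in exactly the same way the paper does. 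The self-contained computation is a reasonable trade against the paper's heavier reliance on \cite{BT18}.

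One imprecision needs repair: for $m+1\leq i\leq m+n-1$ the currents $d_i(u),d_{i+1}(u),e_i(u),f_i(u)$ do \emph{not} satisfy ``precisely'' the Drinfeld relations of $Y_2$ --- the relations (\ref{identities in Ymn-3})--(\ref{identities in Ymn-5}) carry the global sign $(-1)^{|i|}=-1$ in that range, so the naive assignment does not define a homomorphism from $Y_2$. Either first reduce to $1\leq i\leq m-1$ by applying $\zeta_{m|n}$ and $\tau$, exactly as the paper does (this costs nothing: $\zeta_{m|n}(e_j(u))=-f_{m+n-j}(u)$, and a sign on a $p$th power does not affect centrality), or replace $Y_2$ by $Y_{0|2}\cong Y_2$ in your rank-one reduction. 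With that one-line fix the argument is complete.
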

\begin{proof}
Using Lemma \ref{permutation auto} and the anti-automorphism $\tau$ it only needs to be proved that
the coefficients of $(e_i(u))^{p}$ are central in $Y_{m|n}$ for each $i=1,\dots,m-1,m+1,\dots,m+n-1$.

Since we are in characteristic $p$, Theorem \ref{Ymn generated by fde} implies that it is enough to show the following identities
in $Y_{m|n}[[u^{-1},v^{-1}]]$ for all admissible $j, k$:
\begin{align}
(\ad e_i(u))^p (e_j(v)) &= 0,\label{p power coff 1}\\
(\ad e_i(u))^p(d_k(v)) &=0,\label{p power coff 2}\\
(\ad e_i(u))^p(f_j(v)) &= 0.\label{p power coff 3}
\end{align}
A consecutive application of the swap map and the anti-automorphism $\tau$ implies that
we may assume $1\leq i\leq m-1$.
Note that there is a standard embedding $Y_m\hookrightarrow Y_{m|n}$.
Then \cite[Lemma 5.2]{BT18} implies that equations (\ref{p power coff 1})-(\ref{p power coff 3}) hold for all $1\leq j\leq m-1$ and $1\leq k\leq m$.
Due to (\ref{identities in Ymn-2}), (\ref{identities in Ymn-3}) and (\ref{identities in Ymn-5}) it remains to prove that $(\ad\,e_{m-1}(u))^p (e_m(v))=0$.
This follows immediately from (\ref{identities in Ymn-10}).
\end{proof}

\begin{Lemma}\label{root subalgebra p center}
For $1\leq i<j\leq m+n, |i|+|j|=0$ and $r > 0$,
we have that $(e_{i,j}^{(r)})^p, (f_{j,i}^{(r)})^p\in Z(Y_{m|n})$.
\end{Lemma}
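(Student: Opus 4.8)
The plan is to reduce the statement to an even simple root vector $e_i^{(r)}$ and then show directly that $\ad\big((e_i^{(r)})^p\big)$ annihilates a generating set, so that it is zero and $(e_i^{(r)})^p$ is central. Exactly as in the proof of Lemma \ref{root power series central}, Lemma \ref{permutation auto} together with the anti-automorphism $\tau$ of Section \ref{Section Automorphisms} reduces us to proving that $(e_i^{(r)})^p$ is central for a \emph{simple} root index $i$ with $|i|+|j|=0$; in this off-diagonal even situation one necessarily has $i\neq m$, so $e_i^{(r)}$ is even. Since $p>2$, the associative identity $(\ad a)^p=\ad(a^p)$ holds for an even element $a$, whence $\ad\big((e_i^{(r)})^p\big)=(\ad e_i^{(r)})^p$ as an even derivation of $Y_{m|n}$. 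As $Y_{m|n}$ is generated by the $d_k^{(s)},e_j^{(s)},f_j^{(s)}$ by Theorem \ref{Ymn generated by fde}, it suffices to check that $(\ad e_i^{(r)})^p$ kills each of these.

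For the generators ``near'' $e_i$ I would exploit the iterated-bracket identities of Lemma \ref{useful lemma 2}. Each of (\ref{111}), (\ref{222}), (\ref{333}), (\ref{444}), (\ref{555}) has the shape $[e_i^{(r)},\Sigma_l]=\gamma(l)\,\Sigma_{l+1}$, where $\Sigma_l$ is a sum of products of $l$ factors $e_i^{(\cdot)}$ (possibly decorated by one $d_i$, $d_{i+1}$ or $d_i'$) and $\gamma(l)$ equals, up to a fixed sign, $l$, $l$, $l-1$, $l+1$, $l+2$ respectively. Iterating $\ad e_i^{(r)}$ therefore multiplies by a product of consecutive values of $\gamma$, and this scalar vanishes in $\kk$ after enough brackets: in the $d_i^{(s)}$-chain the factor $\gamma(1)=0$ kills it already at the second step, while in the $e_i^{(s)}$- and $d_{i+1}^{(s)}$-chains the accumulated product becomes $\pm p!$ after $p$ brackets and in the $d_i'd_{i+1}$-chain it becomes $\pm p!$ after $p-1$ brackets. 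Hence $(\ad e_i^{(r)})^p$ annihilates $e_i^{(s)}$ (using (\ref{111}) for $s\ge r$ and (\ref{222}) for $s<r$), as well as $d_i^{(s)}$ and $d_{i+1}^{(s)}$; and for $f_i^{(s)}$ one first rewrites $[e_i^{(r)},f_i^{(s)}]$ via relation (\ref{Drinfeld generators relation 3}) as a sum $\sum d_i'^{(t)}d_{i+1}^{(\cdot)}$, which is precisely the $l=0$ input feeding (\ref{555}), so that $(\ad e_i^{(r)})^{p-1}$ already kills it and $(\ad e_i^{(r)})^p f_i^{(s)}=0$.

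The remaining generators are disposed of by inspection of the defining relations. By (\ref{Drinfeld generators relation 1}) and (\ref{Drinfeld generators relation 3}) the element $e_i^{(r)}$ commutes outright with $d_k^{(s)}$ for $k\neq i,i+1$ and with $f_j^{(s)}$ for $j\neq i$, and by (\ref{Drinfeld generators relation 0}) with $e_j^{(s)}$ whenever $|i-j|>1$. The only genuinely new input is the adjacent case $e_{i\pm1}^{(s)}$, which is handled by the quadratic Serre relation (\ref{Drinfeld generators relation 10}): reading it as $(\ad e_i^{(r)})^2\big(e_{i\pm1}^{(s)}\big)=\big[[e_{i\pm1}^{(s)},e_i^{(r)}],e_i^{(r)}\big]=0$ shows the bracket dies after two steps. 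Having verified that $(\ad e_i^{(r)})^p$ vanishes on every generator, I conclude it is the zero derivation, i.e. $(e_i^{(r)})^p\in Z(Y_{m|n})$, which is the assertion.

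The main obstacle I anticipate is the careful matching of the $f_i^{(s)}$ and diagonal cases with the precise sums in Lemma \ref{useful lemma 2}, and in particular confirming that the scalar accumulated over the requisite number of applications is genuinely divisible by $p$ in each chain, together with keeping the two ranges $s\ge r$ and $s<r$ separate. By contrast, the adjacency case $e_{i\pm1}^{(s)}$, which was the delicate point for the power-series statement of Lemma \ref{root power series central}, collapses here to a single application of the Serre relation.
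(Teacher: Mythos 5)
Your proposal is correct, and it is in fact the alternative route that the paper itself flags in the Remark immediately following this lemma (``Lemmas \ref{root power series central} and \ref{root subalgebra p center} can also be deduced using Lemmas \ref{useful lemma 1} and \ref{useful lemma 2}, respectively''). The paper's actual proof is different and shorter: after the same reduction via Lemma \ref{permutation auto} and $\tau$ (plus the swap map to land in the range $1\leq i\leq m-1$), it invokes the standard embedding $Y_m\hookrightarrow Y_{m|n}$ and cites \cite[Lemma 5.3]{BT18} to dispose of all brackets $(\ad e_i^{(r)})^p(e_j^{(s)})$, $(\ad e_i^{(r)})^p(d_k^{(s)})$, $(\ad e_i^{(r)})^p(f_j^{(s)})$ with $j\leq m-1$, $k\leq m$ at once; the only genuinely new computation is $(\ad e_{m-1}^{(r)})^p(e_m^{(s)})=0$, which, exactly as in your last step, follows from the Serre relation (\ref{Drinfeld generators relation 10}). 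Your argument instead reproves the $Y_m$-internal cases from scratch inside $Y_{m|n}$ by iterating the identities (\ref{111})--(\ref{555}) and watching the accumulated scalars ($p!$ in the $e_i$-, $d_{i+1}$- and $d_i'd_{i+1}$-chains, the factor $\gamma(1)=0$ in the $d_i$-chain) vanish in characteristic $p$; your bookkeeping here is accurate, including the observation that $[e_i^{(r)},f_i^{(s)}]$ via (\ref{Drinfeld generators relation 3}) is the $l=0$ term of the chain in (\ref{555}). What your version buys is self-containedness (no appeal to the non-super result, and no need for the swap-map reduction, since Lemma \ref{useful lemma 2} covers all even simple roots uniformly); what the paper's version buys is brevity, outsourcing the scalar-tracking to \cite{BT18} and isolating the single boundary case that is new to the super setting.
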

\begin{proof}
Similar to the proof of Lemma \ref{root power series central},
it is enough to show that $(e_i^{(r)})^{p}\in Z(Y_{m|n})$ for each $1\leq i\leq m-1$.
This reduces to checking
\begin{align}
(\ad e_i^{(r)})^p(e_j^{(s)})&= 0,\label{coff p power 1}\\
(\ad e_i^{(r)})^p(d_k^{(s)})&= 0,\label{coff p power 2}\\
(\ad e_i^{(r)})^p(f_j^{(s)})&= 0.\label{coff p power 3}
\end{align}
Owing to \cite[Lemma 5.3]{BT18},
the equations (\ref{coff p power 1})-(\ref{coff p power 3}) hold for all $1\leq j\leq m-1$ and $1\leq k\leq m$.
Thanks to (\ref{Drinfeld generators relation 1}), (\ref{Drinfeld generators relation 2}) and (\ref{Drinfeld generators relation 0}),
we only have to show that $(\ad e_{m-1}^{(r)})^p(e_m^{(s)})=0$.
But this follows from (\ref{Drinfeld generators relation 10}).
\end{proof}

\begin{Remark}
Lemmas \ref{root power series central} and \ref{root subalgebra p center} can also be deduced using Lemmas \ref{useful lemma 1} and \ref{useful lemma 2},
respectively.
\end{Remark}

Also, we put
\begin{align}\label{pq}
p_{i,j}(u) &=\sum_{r\geq p}p_{i,j}^{(r)}u^{-r}:=e_{i,j}(u)^p,&
q_{j,i}(u) &=\sum_{r\geq p}q_{j,i}^{(r)}u^{-r}:=f_{j,i}(u)^p.
\end{align}

\begin{Theorem}\label{theorem of off dig center}
For $1\leq i<j\leq m+n, |i|+|j|=0$,
the algebras $Z(Y_{m|n})\cap Y_{i,j}^+$ and $Z(Y_{m|n})\cap Y_{j,i}^-$
are infinite rank polynomial algebras freely generated by the central elements $\{(e_{i,j}^{(r)})^p;~r > 0\}$ and $\{(f_{j,i}^{(r)})^p;~r > 0\}$, respectively.
We have that
$(e_{i,j}^{(r)})^p, (f_{j,i}^{(r)})^p \in{\rm F}_{rp-p}Y_{m|n}$
and
\begin{equation}\label{center 1-111}
\gr_{rp-p}
(e_{i,j}^{(r)})^p=(-1)^{|i|}(e_{i,j}x^{r-1})^p,
\qquad
\gr_{rp-p}
(f_{j,i}^{(r)})^p=(-1)^{|j|}(e_{j,i}x^{r-1}\big)^p.
\end{equation}
For $r \geq p$ we have
that
\begin{equation}\label{center 1-222}
p_{i,j}^{(r)} = \left\{
\begin{array}{ll}
(-1)^{|i|}(e_{i,j}^{(r/p)})^p+(*)&\text{if $p \mid r$,}\\
(*)&\text{if $p \nmid r$,}
\end{array}
\right.
\end{equation}
where $(*)\in{\rm F}_{r-p-1} Y_n$
is a polynomial in
the elements
$(e_{i,j}^{(s)})^p$ for $1 \leq s<\lfloor r/p\rfloor$.
Hence, the central elements
$\{p_{i,j}^{(rp)};~r > 0\}$ give another algebraically
independent set of generators for
$Z(Y_{m|n})\cap Y_{i,j}^+$
lifting the central elements $\{(e_{i,j}x^{r-1})^p;~r >
0\}$ of $\gr Y_{m|n}$.
Analogous statements with $Y_{i,j}^+, e, p$ and $e_{i,j}x^{r-1}$
replaced by $Y_{j,i}^-, f ,q$ and $e_{j,i}x^{r-1}$ also hold.
\end{Theorem}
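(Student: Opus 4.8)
The plan is to combine the centrality already established in Lemma \ref{root subalgebra p center} with a passage to the associated graded algebra $\gr Y_{m|n}\cong U(\fg)$ (Lemma \ref{lemma U(g) isomorphic to graded of loop filtration}), thereby reducing the structural statement to the description of $Z(U(\fg))$ in Theorem \ref{center of enveloping algebra of shifted current algebra}. By the transposition anti-automorphism $\tau$ (which interchanges $e_{i,j}^{(r)}$ with $f_{j,i}^{(r)}$ and preserves the centre) it suffices to treat $Y_{i,j}^+$; and by Lemma \ref{permutation auto} together with the swap and shift maps we may further assume $(i,j)=(i,i+1)$ is a simple even root (with $1\le i\le m-1$ or $m+1\le i\le m+n-1$), so that $Y_{i,i+1}^+$ is generated by the single family $\{e_i^{(r)};~r>0\}$, whose internal relation is (\ref{Drinfeld generators relation 4}).

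First I would record that the ordered monomials in $\{e_i^{(r)};~r>0\}$ form a basis of $Y_{i,i+1}^+$: linear independence is immediate from Theorem \ref{PBW for Ymn}, while (\ref{Drinfeld generators relation 4}) shows these monomials span. Consequently $\gr Y_{i,i+1}^+$ is the polynomial algebra $\kk[e_{i,i+1}x^a;~a\ge 0]$, the symbol of $e_i^{(a+1)}$ being $(-1)^{|i|}e_{i,i+1}x^a$ by (\ref{identification}); note these current elements commute in $U(\fg)$ since $i\ne i+1$. The inclusion $\kk[(e_i^{(r)})^p;~r>0]\subseteq Z(Y_{m|n})\cap Y_{i,i+1}^+$ holds by Lemma \ref{root subalgebra p center}, and freeness follows because the symbols $(e_{i,i+1}x^{r-1})^p$ are algebraically independent in $U(\fg)$.

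The substance is the reverse inclusion. Given a nonzero central $z\in Y_{i,i+1}^+$ of loop-degree $d$, a top-symbol comparison shows that $\gr z$ commutes with the symbol of every generator of $Y_{m|n}$: if $[\gr z,\bar y]\ne 0$ it would be the leading symbol of $[z,y]=0$, a contradiction. Hence $\gr z\in Z(U(\fg))$, and moreover $\gr z\in\kk[e_{i,i+1}x^a]$. It then remains to compute $Z(U(\fg))\cap\kk[e_{i,i+1}x^a]$. Passing to $\gr U(\fg)\cong S(\fg)$ and invoking Lemma \ref{lemma symmetric invariant of S(gs)^gs}, I would apply the algebra homomorphism $S(\fg)\to\kk[e_{i,i+1}x^a]$ that kills every PBW variable except the $e_{i,i+1}x^a$: it fixes $\kk[e_{i,i+1}x^a]$, annihilates each $z_r$ and each $(e_{k,l}x^r)^p$ with $(k,l)\ne(i,i+1)$, so any invariant lying in $\kk[e_{i,i+1}x^a]$ must be a polynomial in $\{(e_{i,i+1}x^a)^p\}$ alone, giving $S(\fg)^{\fg}\cap\kk[e_{i,i+1}x^a]=\kk[(e_{i,i+1}x^a)^p]$. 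Lifting this back through both the symmetric and the loop filtrations, by subtracting the appropriate polynomial in the central elements $(e_i^{(r)})^p$ (whose symbols are exactly $(e_{i,i+1}x^{r-1})^p$) and inducting on degree, yields $z\in\kk[(e_i^{(r)})^p;~r>0]$.

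Finally, the graded images (\ref{center 1-111}) are immediate from $\gr((e_{i,j}^{(r)})^p)=(\gr e_{i,j}^{(r)})^p$ and (\ref{identification}) (using that $p$ is odd), and the expansion (\ref{center 1-222}) follows by computing the leading symbol of $p_{i,j}^{(r)}=[u^{-r}]\,e_{i,j}(u)^p$ via the freshman's-dream identity $(\sum_a e_{i,j}x^a\,z^a)^p=\sum_a (e_{i,j}x^a)^p z^{ap}$ in the commutative subalgebra $\kk[e_{i,j}x^a]\subseteq U(\fg)$, which forces that symbol to vanish unless $p\mid r$ and to agree with the symbol of $(e_{i,j}^{(r/p)})^p$ otherwise; this unitriangular relation shows $\{p_{i,j}^{(rp)};~r>0\}$ is an alternative free generating set. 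The analogous assertions for $Y_{j,i}^-$ then come for free by applying $\tau$. The main obstacle is the third paragraph: making the transfer to $U(\fg)$ rigorous, i.e. verifying that the symbol of a central element is genuinely central in $U(\fg)$ and that its intersection with the single-root polynomial subalgebra collapses to pure $p$-th powers with no contribution from the remaining free generators of $Z(U(\fg))$ — products such as $(e_{i,k}x^a)^p(e_{k,j}x^b)^p$ carry the same weight $p(\varepsilon_i-\varepsilon_j)$ and are exactly what the evaluation homomorphism is designed to rule out.
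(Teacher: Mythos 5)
Your proposal is correct and takes essentially the same route as the paper, which simply defers to the proof of \cite[Theorem 5.4]{BT18}: centrality from Lemma \ref{root subalgebra p center}, the PBW/ordered-monomial structure of the root subalgebras, passage to the associated graded algebra, and the description of the invariants from Lemma \ref{lemma symmetric invariant of S(gs)^gs} and Theorem \ref{center of enveloping algebra of shifted current algebra}. Your reduction to simple even roots via $\tau$ and Lemma \ref{permutation auto}, and the evaluation-homomorphism computation of $S(\fg)^{\fg}\cap\kk[e_{i,i+1}x^a]$, are precisely the details the paper leaves implicit, so no further comparison is needed.
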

\begin{proof}
The proof, which uses Theorem \ref{center of enveloping algebra of shifted current algebra},
Lemma \ref{root power series central} and Lemma \ref{root subalgebra p center},
is similar to the proof of \cite[Theorem 5.4]{BT18}.
\end{proof}

\subsection{\boldmath Diagonal $p$-central elements}\label{subsection: diagonal p-central elements}
This subsection we introduce the $p$-central elements that belong to the diagonal subalgebras
$$Y_i^0:= \kk[d_i^{(r)};~r>0]$$
of $Y_{m|n}$ for $1\leq i\leq m+n$.
Note that  $Y_i^0=\kk[d_i'^{(r)};~r>0]$ by (\ref{di di' relation}).

We define
\begin{equation}\label{definition b}
b_i(u):=\sum_{r \geq 0}b_i^{(r)}u^{-r}
:=\left\{
\begin{array}{ll}
d_{i\downarrow p}(u)=d_i(u)d_i(u-1)\cdots d_i(u-p+1)&\text{if $|i|=0$,}\\
d_{i\downarrow p}(u)^{-1}=d_i(u)^{-1}d_i(u-1)^{-1}\cdots d_i(u-p+1)^{-1}&\text{if $|i|=1$.}
\end{array}\right.
\end{equation}

\begin{Lemma}\label{diag center lemma 1}
For all $i=1,\dots,m+n$ and $r>0$, the elements $b_i^{(r)}$ belongs to $Z(Y_{m|n})$.
\end{Lemma}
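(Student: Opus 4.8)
The plan is to show that $b_i(u)$ commutes with all the generators listed in Theorem~\ref{Ymn generated by fde}, namely the $d_k^{(s)}$, $e_j^{(s)}$ and $f_j^{(s)}$. Commutation with the $d_k^{(s)}$ is automatic from (\ref{di dj commu}), since $b_i(u)$ is a product of series in the $d_i^{(s)}$. By the transposition anti-automorphism $\tau$ of Section~\ref{Section Automorphisms}(2), which fixes each $d_i^{(r)}$ and swaps $e_j^{(r)}\leftrightarrow f_j^{(r)}$, it suffices to prove that $[b_i(u),e_j(v)]=0$ for all $j$. Relation (\ref{identities in Ymn-3}) shows that $d_i(u)$ commutes with $e_j(v)$ unless $j\in\{i-1,i\}$, so the only cases requiring work are $j=i$ and $j=i-1$.

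First I would treat $j=i$. The key input is Lemma~\ref{cyclic product of di}: for $|i|=0$ (so $1\le i\le m$) relation (\ref{d arrow e 1}) gives
\[
(u-v)[d_{i\downarrow p}(u),e_i(v)]=p\,d_{i\downarrow p}(u)(e_i(v)-e_i(u)),
\]
and the right-hand side vanishes since $p=0$ in $\kk$. Thus $[d_{i\downarrow p}(u),e_i(v)]=0$, i.e. $[b_i(u),e_i(v)]=0$. For $|i|=1$ (so $m+1\le i\le m+n$) I instead use (\ref{d arrow e 2}), which gives $[d_{i\uparrow p}(u),e_i(v)]=0$; but here $b_i(u)=d_{i\downarrow p}(u)^{-1}$, so I would either run the analogous downward computation or observe that $b_i(u)^{-1}$ commutes with $e_i(v)$ and hence so does $b_i(u)$. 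The crucial cancellation in every case is simply that the multiplicity $\pm p$ produced by the telescoping product of $p$ shifted factors is zero in characteristic $p$.

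Next comes $j=i-1$. Here relations (\ref{d arrow e 3}) and (\ref{d arrow e 4}) of Lemma~\ref{cyclic product of di} supply exactly the needed identities: for $2\le i\le m$ one has $(u-v)[d_{i\uparrow p}(u),e_{i-1}(v)]=p\,d_{i\uparrow p}(u)(e_{i-1}(u)-e_{i-1}(v))=0$, and for $m+1\le i\le m+n$ one has $(u-v)[d_{i\downarrow p}(u),e_{i-1}(v)]=-p\,d_{i\downarrow p}(u)(e_{i-1}(u)-e_{i-1}(v))=0$. Matching the correct arrow-product against the definition (\ref{definition b}) of $b_i(u)$ — and again passing to inverses when $|i|=1$ — yields $[b_i(u),e_{i-1}(v)]=0$. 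Combining the two cases with the $\tau$-symmetry for the $f$'s shows $b_i(u)$ supercommutes with all algebra generators, so every coefficient $b_i^{(r)}$ is central.

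The main obstacle, and the reason Lemma~\ref{cyclic product of di} was established beforehand, is precisely the telescoping computation that turns a commutator of the $p$-fold shifted product $d_{i\downarrow p}$ or $d_{i\uparrow p}$ with $e_j(v)$ into a single term carrying the scalar $\pm p$; once that lemma is in hand the present proof is essentially bookkeeping. The only subtlety I would flag is the sign/inverse matching: for odd $i$ the element $b_i(u)$ is the inverse product $d_{i\downarrow p}(u)^{-1}$, and the relevant vanishing statements in Lemma~\ref{cyclic product of di} are phrased for $d_{i\uparrow p}$ (case $j=i$) or $d_{i\downarrow p}$ (case $j=i-1$). One must therefore be careful to either invert the identity or cite the correct arrow-direction, but since $[A,e_j(v)]=0$ forces $[A^{-1},e_j(v)]=0$, no genuine difficulty arises.
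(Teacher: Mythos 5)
Your proposal is correct and follows essentially the same route as the paper: reduce via Theorem~\ref{Ymn generated by fde}, (\ref{di dj commu}) and the anti-automorphism $\tau$ to showing $[b_i(u),e_j(v)]=0$, dispose of $j\notin\{i-1,i\}$ by (\ref{identities in Ymn-3}), and then kill the remaining two cases by setting $k=p$ in the appropriate identity of Lemma~\ref{cyclic product of di} so that the coefficient $\pm p$ vanishes in $\kk$. The arrow-direction issue you flag is handled in the paper exactly as you suggest, via the identity $d_{i\downarrow p}(u)=d_{i\uparrow p}(u-p+1)$ together with the observation that an element commuting with $e_j(v)$ has inverse commuting with $e_j(v)$.
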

\begin{proof}
In view of Theorem \ref{Ymn generated by fde} and (\ref{di dj commu}),
it suffices to check that
$$[b_i(u),e_j(v)]=0=[b_i(u),f_j(v)]$$
for all $1\leq j\leq m+n-1$.
By applying the anti-automorphism $\tau$ (Section \ref{Section Automorphisms}(2)),
it suffices to check just the first equality.
By (\ref{identities in Ymn-3}), the first equality obviously holds when $j\notin\{i-1,i\}$.
Consider first the case that $j=i-1$.
When $|i|=0$, then (\ref{d arrow e 3}) implies that
$$[b_i(u),e_{i-1}(v)]=[d_{i\downarrow p}(u),e_{i-1}(v)]=[d_{i\uparrow p}(u-p+1),e_{i-1}(v)]=0.$$
Now let $|i|=1$, then (\ref{d arrow e 4}) yields $[d_{i\downarrow p}(u),e_{i-1}(v)]=0$,
so that $$[b_i(u),e_{i-1}(v)]=[d_{i\downarrow p}(u)^{-1},e_{i-1}(v)]=0.$$
One argues similarly for the case $j=i$ using (\ref{d arrow e 1}), (\ref{d arrow e 2}) instead of (\ref{d arrow e 3}), (\ref{d arrow e 4}).
\end{proof}

\begin{Theorem}\label{theorem dig center}
For $1\leq i\leq m+n$,
the algebra $Z(Y_{m|n})\cap Y_i^0$
is an infinite rank polynomial algebra freely generated by
the central elements $\{b_i^{(rp)};~r > 0\}$.
We have that $b_i^{(rp)} \in{\rm F}_{rp-p}Y_{m|n}$
and
\begin{equation}\label{grading of brp}
\gr_{rp-p}b_i^{(rp)}=(e_{i,i}x^{r-1})^p-e_{i,i}x^{rp-p}.
\end{equation}
For $0<r <p$, we have that
$b_i^{(r)}=0$. For $r>p$ with $p \nmid
r$, we have that $b_i^{(r)}\in{\rm F}_{r-p-1}Y_{m|n}$ and it
is a polynomial in the elements $\{b_i^{(sp)};~0<s\leq \lfloor r/p \rfloor\}$.
\end{Theorem}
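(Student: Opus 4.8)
The plan is to follow the strategy of the off-diagonal Theorem~\ref{theorem of off dig center} and of \cite[Theorem 5.4]{BT18}. Centrality of the $b_i^{(r)}$ is already in hand from Lemma~\ref{diag center lemma 1}, so the substance is to compute the leading symbols of the $b_i^{(rp)}$ with respect to the loop filtration and then reduce the freeness and generation statements to the description of $Z(\fg)$ in Theorem~\ref{center of enveloping algebra of shifted current algebra}. Throughout I work inside the commutative polynomial subalgebra $Y_i^0=\kk[d_i^{(r)};~r>0]$, whose associated graded is $\gr Y_i^0=\kk[e_{i,i}x^s;~s\geq0]\subseteq U(\fg)$ by \eqref{identification}; I treat $|i|=0$, the case $|i|=1$ being identical after replacing $d_{i\downarrow p}(u)$ by its inverse, since by \eqref{di di' relation} the leading symbols of $d_i'^{(s)}$ again equal $e_{i,i}x^{s-1}$.

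First I would establish $b_i^{(r)}=0$ for $0<r<p$ via the periodicity
\begin{equation*}
b_i(u-1)=b_i(u).
\end{equation*}
Indeed, since the factors commute, \eqref{definition b} telescopes to $b_i(u-1)b_i(u)^{-1}=d_i(u-p)d_i(u)^{-1}$, and $d_i(u-p)=d_i(u)$ because the shift by $p=\Char\kk$ is trivial in $\kk$. Writing $b_i(u)=1+\sum_{r\geq1}b_i^{(r)}u^{-r}$ and taking $r_0$ minimal with $b_i^{(r_0)}\neq0$, the coefficient of $u^{-r_0-1}$ in $b_i(u)-b_i(u-1)=0$ equals $-r_0\,b_i^{(r_0)}$; if $0<r_0<p$ this forces $b_i^{(r_0)}=0$, a contradiction, so the smallest nonzero index is $\geq p$.

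Next comes the leading symbol, which I expect to be the main obstacle. Passing to $\gr Y_i^0=\kk[y_s:=e_{i,i}x^s]$, I would expand $b_i(u)=\prod_{k=0}^{p-1}d_i(u-k)$ and sort the contribution to the $u^{-rp}$-coefficient by the number $l$ of nontrivial factors $d_i^{(s)}$ ($s\geq1$) chosen and the total shift excess $e\geq0$ produced by expanding the $(u-k)^{-s}$; such a term has loop degree $rp-(l+e)$, so the top-degree part collects the contributions with $l+e=p$. The contribution with $l=p,\ e=0$ is
\begin{equation*}
\sum_{\substack{t_1,\dots,t_p\geq0\\ t_1+\cdots+t_p=(r-1)p}}y_{t_1}\cdots y_{t_p}=[z^{(r-1)p}]\Big(\textstyle\sum_{t\geq0}y_tz^t\Big)^p=y_{r-1}^p
\end{equation*}
by the Frobenius identity $(\sum_t y_tz^t)^p=\sum_t y_t^pz^{tp}$, giving $(e_{i,i}x^{r-1})^p$, while the contribution with $l=1,\ e=p-1$ equals $y_{(r-1)p}\binom{rp-1}{p-1}\sum_{k=0}^{p-1}k^{p-1}=-y_{(r-1)p}$ using Lucas' theorem ($\binom{rp-1}{p-1}\equiv1$) and $\sum_{k\in\FF_p}k^{p-1}=-1$. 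The crux is to show the intermediate contributions $2\leq l\leq p-1$ vanish, which I would do via the vanishing of $\sum_{k\in\FF_p}k^a$ for $(p-1)\nmid a$ together with the $p$-divisibility of the relevant multinomial coefficients. This yields
\begin{equation*}
\gr_{rp-p}b_i^{(rp)}=(e_{i,i}x^{r-1})^p-e_{i,i}x^{rp-p}=(e_{i,i}x^{r-1})^p-(e_{i,i}x^{r-1})^{[p]},
\end{equation*}
the $p$-central generator of $Z(\fg)$ attached to $e_{i,i}x^{r-1}$; in particular $b_i^{(rp)}\in{\rm F}_{rp-p}Y_{m|n}$ and \eqref{grading of brp} holds.

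Finally I would deduce freeness. The symbols above are algebraically independent in $U(\fg)$, so the $b_i^{(rp)}$ generate a polynomial subalgebra of $Z(Y_{m|n})\cap Y_i^0$. For the reverse inclusion, the loop symbol of any element of $Z(Y_{m|n})\cap Y_i^0$ lies in $Z(\fg)\cap\kk[e_{i,i}x^s]$, and I would show
\begin{equation*}
Z(\fg)\cap\kk[e_{i,i}x^s;~s\geq0]=\kk\big[(e_{i,i}x^r)^p-e_{i,i}x^{rp};~r\geq0\big].
\end{equation*}
This I would prove by expressing such an invariant in the free generators of Theorem~\ref{center of enveloping algebra of shifted current algebra} and passing to $S(\fg)$, where Lemma~\ref{lemma symmetric invariant of S(gs)^gs} together with $z_r^p=\sum_j(e_{j,j}x^r)^p$ reduces matters to $S(\fg)^{\fg}\cap\kk[e_{i,i}x^s]=\kk[(e_{i,i}x^r)^p]$; the latter holds because an element of $\kk[e_{i,i}x^s]$ annihilated by every $\ad(e_{k,i})$ has vanishing derivatives $\partial/\partial(e_{i,i}x^r)$ and hence lies in $\kk[(e_{i,i}x^r)^p]$. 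A filtered induction on loop degree, subtracting suitable polynomials in the $b_i^{(rp)}$, then gives $Z(Y_{m|n})\cap Y_i^0=\kk[b_i^{(rp)};~r>0]$ freely. For $p\nmid r$, the top loop-degree of any central element of $\kk[e_{i,i}x^s]$ is divisible by $p$, so $\gr_{r-p}b_i^{(r)}=0$, whence $b_i^{(r)}\in{\rm F}_{r-p-1}Y_{m|n}$; being central in $Y_i^0$ it is a polynomial in the $b_i^{(sp)}$, and degree counting forces $0<s\leq\lfloor r/p\rfloor$, as claimed.
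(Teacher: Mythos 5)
Your proposal is correct, and its overall architecture is the same as the paper's (which outsources essentially the whole argument to \cite[Lemma 2.9 and Theorem 5.8]{BT18}): centrality comes from Lemma \ref{diag center lemma 1}, the leading symbols \eqref{grading of brp} exhibit the $b_i^{(rp)}$ as lifts of the generators $(e_{i,i}x^{r-1})^p-(e_{i,i}x^{r-1})^{[p]}$ of $Z(\fg)\cap\kk[e_{i,i}x^s;\,s\geq 0]$, and a filtered induction against Theorem \ref{center of enveloping algebra of shifted current algebra} (whose diagonal slice you correctly isolate via the vanishing-partial-derivatives argument in $S(\fg)$) yields freeness and the statements for $p\nmid r$. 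What you do differently is to re-derive the content of \cite[Lemma 2.9]{BT18} rather than cite it: the periodicity identity $b_i(u-1)=b_i(u)$, coming from $d_i(u-p)=d_i(u)$ in characteristic $p$, is a clean alternative proof that $b_i^{(r)}=0$ for $0<r<p$, and your $l=p$ and $l=1$ computations (Frobenius on the generating series, Lucas, and $\sum_{k\in\FF_p}k^{p-1}=-1$) correctly produce the two terms of \eqref{grading of brp}. The one step you leave as a sketch --- the vanishing of all contributions with $l+e<p$ and of the intermediate $l+e=p$ contributions with $2\leq l\leq p-1$ --- is precisely the combinatorial heart of the cited lemma; it is true, and the tools you name ($\sum_{k\in\FF_p}k^a=0$ when $(p-1)\nmid a$, $a>0$, plus $p$-divisibility of the relevant multinomials) do suffice, but note that you need the $l+e<p$ vanishing for \emph{arbitrary} coefficients $b_i^{(N)}$, not only for $N=rp$: your final deduction that $b_i^{(r)}\in{\rm F}_{r-p-1}Y_{m|n}$ for $p\nmid r$ silently uses the a priori bound $b_i^{(r)}\in{\rm F}_{r-p}Y_{m|n}$ before invoking the fact that every loop degree occurring in $\kk[(e_{i,i}x^s)^p-e_{i,i}x^{sp};\,s\geq0]$ is divisible by $p$.
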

\begin{proof}
Recall that $\gr_rd_i^{(r+1)}=-\gr_rd_i'^{(r+1)}$,
so that the identification (\ref{identification}) implies that $\gr_rd_i'^{(r+1)}=e_{i,i}x^r$ when $|i|=1$.
Then the proof, which uses Theorem \ref{center of enveloping algebra of shifted current algebra},
Lemma \ref{diag center lemma 1},
is similar to the proof of \cite[Theorem 5.8]{BT18}.
\end{proof}

\subsection{\boldmath The center $Z(\Ymn)$}
We define the {\it $p$-center} $Z_p(Y_{m|n})$
of $Y_{m|n}$ to be the subalgebra generated by
\begin{equation}\label{p-center}
\big\{b_i^{(rp)};~1\leq i \leq m+n, r > 0\big\} \cup
\Big\{\big(e_{i,j}^{(r)} \big)^p,
\big(f_{j,i}^{(r)} \big)^p;~1\leq i<j\leq m+n, r>0, |i|+|j|=0\Big\}.
\end{equation}
According to Proposition \ref{HC center of Ymn}, Lemma \ref{root subalgebra p center} and Lemma \ref{diag center lemma 1},
we know that both $Z_\HC(Y_{m|n})$ and $Z_p(Y_{m|n})$ are subalgebras of $Z(Y_{m|n})$.
Note also by (\ref{identification}) and (\ref{grading of brp})
that $\gr Z_p(Y_{m|n})$ may be identified with the $p$-center $Z_p(\fg)$ of $U(\fg)$ from (\ref{p-center is free polynomial}).

We need one more family of elements.
Recalling (\ref{definition c}) and
(\ref{definition b}), we let
\begin{eqnarray}\label{definition bc}
bc(u)&:=&\sum_{r\geq 0}bc^{(r)}u^{-r}\nonumber\\
&:=&b_1(u)b_2(u-1)\cdots b_m(u-m+1)b_{m+1}(u-m+1)\cdots b_{m+n}(u-m+n)\nonumber\\
&=&c(u)c(u-1)\cdots c(u-p+1).
\end{eqnarray}
By definition each $bc^{(r)}$ can be expressed as a polynomial in the elements
$\{c^{(s)};~s > 0\}$, so that it belongs to
$Z_{\HC}(Y_{m|n})$.
It is also a polynomial in the elements
$\{b_i^{(s)};~1\leq i\leq m+n, s > 0\}$, so that it belongs to $Z_p(Y_{m|n})$ by Theorem \ref{theorem dig center}.
Consequently, $bc^{(r)}\in Z_{\HC}(Y_{m|n})\cap Z_p(Y_{m|n})$.

\begin{Lemma}
For $r > 0$,
we have that $bc^{(rp)} \in{\rm F}_{rp-p}Y_{m|n}$
and
\begin{equation}\label{grading of bc rp}
\gr_{rp-p}bc^{(rp)}=z_{r-1}^p-z_{rp-p}.
\end{equation}
\end{Lemma}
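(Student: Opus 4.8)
The plan is to work with the diagonal factorization of $bc(u)$ rather than the expression $c(u)c(u-1)\cdots c(u-p+1)$, since Theorem~\ref{theorem dig center} already pins down the leading loop-degree terms of each $b_i^{(rp)}$. Writing $bc(u)=\prod_{i=1}^{m+n}b_i(u-a_i)$ with the shifts $a_i$ prescribed by (\ref{definition bc}), I extract the coefficient $bc^{(rp)}=\sum_{s_1+\cdots+s_{m+n}=rp}\prod_{i}[b_i(u-a_i)]_{u^{-s_i}}$. Because each $b_i$ is even and central by Lemma~\ref{diag center lemma 1}, the factors commute and no sign factors intervene, so this is an honest commutative product and the order of the factors is immaterial.

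First I would record the loop-degree behaviour of a single shifted factor. Since $b_i^{(s)}=0$ for $0<s<p$ by Theorem~\ref{theorem dig center}, the substitution $u\mapsto u-a_i$ expresses $[b_i(u-a_i)]_{u^{-s}}$ as a $\kk$-linear combination of the $b_i^{(s')}$ with $s'\le s$; hence it still vanishes for $0<s<p$, equals $1$ for $s=0$, and for $s\ge p$ lies in ${\rm F}_{s-p}Y_{m|n}$ with $\gr_{s-p}[b_i(u-a_i)]_{u^{-s}}=\gr_{s-p}b_i^{(s)}$, as the shift contributes only strictly lower loop-degree terms.

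The key step is then a degree count. In any summand contributing to $bc^{(rp)}$ each index $s_i$ lies in $\{0\}\cup\{s:s\ge p\}$; if exactly $k$ of them are nonzero, those sum to $rp$ and the corresponding product has loop degree at most $rp-kp$. As $r>0$ forces $k\ge 1$, the top degree $rp-p$ is attained only when $k=1$, i.e. one factor contributes its $u^{-rp}$-coefficient while all others contribute $1$. This yields $bc^{(rp)}\in{\rm F}_{rp-p}Y_{m|n}$ and $\gr_{rp-p}bc^{(rp)}=\sum_{i=1}^{m+n}\gr_{rp-p}b_i^{(rp)}$. Substituting (\ref{grading of brp}) gives $\sum_{i}\big((e_{i,i}x^{r-1})^p-e_{i,i}x^{rp-p}\big)$; since the diagonal elements $e_{i,i}x^{r-1}$ pairwise commute in $U(\fg)$, the characteristic-$p$ identity $(\sum_i a_i)^p=\sum_i a_i^p$ for commuting $a_i$ converts $\sum_i(e_{i,i}x^{r-1})^p$ into $z_{r-1}^p$, while $\sum_i e_{i,i}x^{rp-p}=z_{rp-p}$ by (\ref{center elements in g sigma}), producing exactly (\ref{grading of bc rp}).

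The main obstacle is the degree bookkeeping: one must be certain that contributions of loop degree strictly between $rp-p$ and $rp-1$ genuinely drop out. Here this is automatic, because every nonconstant factor costs at least $p$ in its index while saving only the constant $p$ in degree, so a single nonconstant factor is forced; no delicate cancellation among $p$ copies (as would arise from the $c(u-k)$ presentation) is needed. One should also confirm, as in the first step, that the prescribed shifts $a_i$ leave the leading term of each factor unchanged.
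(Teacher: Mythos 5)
Your proof is correct, but it takes a genuinely different route from the paper's. The paper works with the Harish--Chandra side of the identity (\ref{definition bc}): it writes $bc(u)=\prod_{k=0}^{p-1}c(u-k)$, invokes the general statement \cite[Lemma 2.9]{BT18} about such cyclic products of a power series with commuting coefficients and constant term $1$ to get $bc^{(rp)}\equiv (c^{(r)})^p-c^{(rp-p+1)}\ (\mathrm{mod}\ {\rm F}_{rp-p-1}Y_{m|n})$, and then applies $\gr_{r-1}c^{(r)}=z_{r-1}$ from Proposition \ref{HC center of Ymn}. You instead work with the $p$-center side, expanding $bc(u)=\prod_i b_i(u-a_i)$ and using the facts from Theorem \ref{theorem dig center} that $b_i^{(s)}=0$ for $0<s<p$ and $b_i^{(s)}\in{\rm F}_{s-p}Y_{m|n}$ for $s\geq p$; your degree count correctly shows that only the terms with a single nonconstant factor survive in $\gr_{rp-p}$, giving $\sum_i\gr_{rp-p}b_i^{(rp)}=\sum_i\big((e_{i,i}x^{r-1})^p-e_{i,i}x^{rp-p}\big)$, and the freshman's dream for the pairwise commuting even elements $e_{i,i}x^{r-1}$ in $U(\fg)$ converts this into $z_{r-1}^p-z_{rp-p}$. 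Your observation that the shifts $a_i$ do not disturb the leading term (because the lower coefficients $b_i^{(s')}$, $s'<s$, they mix in lie in ${\rm F}_{s'-p}\subseteq{\rm F}_{s-p-1}$) is the right justification and is needed. What each approach buys: the paper's is shorter because the delicate cancellation among the $p$ shifted copies of $c(u)$ is packaged once and for all in the cited lemma, and it directly exhibits $bc^{(rp)}$ as a polynomial in the $c^{(s)}$; yours avoids that external lemma entirely (the single-nonconstant-factor argument replaces it), is self-contained given Theorem \ref{theorem dig center}, and makes transparent why $\gr_{rp-p}bc^{(rp)}$ is the sum of the diagonal $p$-central leading terms.
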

\begin{proof}
By definition of $bc(u)$ (\ref{definition bc}), we have
$$bc(u)=\prod_{i=1}^{p}\left(\sum_{r\geq 0}c^{(r)}(u - i+1)^{-r}\right).$$
Since $c^{(0)}=1$ and the set $\{c^{(r)};~r>0\}$ are commuting elements by Proposition \ref{HC center of Ymn},
it follows from \cite[Lemma 2.9]{BT18} that
$bc^{(rp)}\in{\rm F}_{rp-p}Y_{m|n}$
and
$$
bc^{(rp)}\equiv(c^{(r)})^p-c^{(rp-p+1)}~({\rm mod}~{\rm F}_{rp-p-1}Y_{m|n}).
$$
Our assertion now follows from the fact that ${\rm gr}_{r-1}c^{(r)}=z_{r-1}$.
\end{proof}

Now we can state the main result of this section.
The foregoing observations in conjunction with (\ref{p-center is free polynomial}), (\ref{identification}) and Theorem \ref{center of enveloping algebra of shifted current algebra} yield the following Theorem.
The proof is similar to the proof of \cite[Theorem 5.11]{BT18}.

\begin{Theorem}\label{main theorem: center of Ymn}
The centre $Z(Y_{m|n})$ is generated by $Z_{\HC}(\Ymn)$ and
$Z_p(\Ymn)$. Moreover:
\begin{enumerate}
\item $Z_{\HC}(\Ymn)$ is the free polynomial algebra
generated by $\{c^{(r)};~r > 0\}$;
\item $Z_p(\Ymn)$ is the free polynomial algebra
generated by
\begin{equation}\label{generator of p center}
\big\{b_i^{(rp)};~1\leq i\leq m+n, r > 0\big\}\cup
\Big\{\big(e_{i,j}^{(r)}\big)^p,
\big(f_{j,i}^{(r)}\big)^p;~1\leq i< j\leq m+n, r>0,|i|+|j|=0\Big\};
\end{equation}
\item $Z(\Ymn)$ is the free polynomial algebra generated by
\begin{equation}
\label{generator of center of Ymn}
\big\{b_i^{(rp)}, c^{(r)};~2\leq i\leq m+n,r>0\big\}\cup\Big\{\big(e_{i,j}^{(r)} \big)^p,
\big(f_{i,j}^{(s)} \big)^p;~1\leq i< j\leq m+n, r>0,|i|+|j|=0\Big\};
\end{equation}
\item $Z_{\HC}(Y_{m|n})\cap Z_p(Y_{m|n})$ is the free polynomial algebra generated
by $\{bc^{(rp)};~r>0\}$.
\end{enumerate}
\end{Theorem}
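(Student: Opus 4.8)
The plan is to pass to the associated graded algebra for the loop filtration and to match principal symbols against the free description of $Z(U(\fg))$ in Theorem \ref{center of enveloping algebra of shifted current algebra}. The one structural input is that the principal symbol of any central element of $\Ymn$ is central in $\gr\Ymn\cong U(\fg)$ (Lemma \ref{lemma U(g) isomorphic to graded of loop filtration}), so that $\gr Z(\Ymn)\subseteq Z(U(\fg))$. I will combine this with two standard filtered principles: (i) if a filtered algebra is generated by elements whose symbols are algebraically independent, then it is free polynomial on those generators and its associated graded is the polynomial algebra on the symbols; and (ii) if $B\subseteq C$ are filtered subalgebras with $\gr B=\gr C$, then $B=C$. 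Granting these, every clause reduces to a leading-term count, all of which are already recorded in the preceding results.

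Clause (1) is immediate from Proposition \ref{HC center of Ymn}, since $\gr_{r-1}c^{(r)}=z_{r-1}$ and the $z_s$ are algebraically independent; thus $Z_{\HC}(\Ymn)$ is free on $\{c^{(r)}\}$ and $\gr Z_{\HC}(\Ymn)=\kk[z_s;\,s\geq 0]$. For clause (2), the generators (\ref{generator of p center}) of $Z_p(\Ymn)$ have symbols $(e_{i,i}x^{r-1})^p-e_{i,i}x^{(r-1)p}$ for all $i$ (by (\ref{grading of brp})) and $\pm(e_{i,j}x^{r-1})^p$ for the off-diagonal even pairs (by (\ref{center 1-111})). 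As $(i,j)$ runs over all even pairs and $r-1$ over $\NN$, these are exactly the free generators of $Z_p(\fg)$ listed in (\ref{p-center is free polynomial}); hence principle (i) shows $Z_p(\Ymn)$ is free polynomial on (\ref{generator of p center}) and $\gr Z_p(\Ymn)=Z_p(\fg)$.

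For the main assertion and clause (3), let $A$ be the subalgebra generated by (\ref{generator of center of Ymn}). The symbols of its generators are $z_{r-1}$ (from $c^{(r)}$), $(e_{i,i}x^{r-1})^p-e_{i,i}x^{(r-1)p}$ for $2\leq i\leq m+n$, and $\pm(e_{i,j}x^{r-1})^p$ for the off-diagonal even pairs; by Theorem \ref{center of enveloping algebra of shifted current algebra} these are precisely the free generators of $Z(U(\fg))$. By principle (i), $A$ is free polynomial on (\ref{generator of center of Ymn}) and $\gr A=Z(U(\fg))$. Since $A\subseteq Z(\Ymn)$ we get $Z(U(\fg))=\gr A\subseteq\gr Z(\Ymn)\subseteq Z(U(\fg))$, so $\gr A=\gr Z(\Ymn)$ and principle (ii) gives $A=Z(\Ymn)$, proving (3). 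As every generator in (\ref{generator of center of Ymn}) lies in $Z_{\HC}(\Ymn)$ or $Z_p(\Ymn)$, we obtain $A\subseteq\langle Z_{\HC}(\Ymn),Z_p(\Ymn)\rangle\subseteq Z(\Ymn)=A$, which shows that $Z(\Ymn)$ is generated by its Harish--Chandra center and its $p$-center.

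Clause (4) is where I expect the real work. Using $\gr(Z_{\HC}(\Ymn)\cap Z_p(\Ymn))\subseteq\gr Z_{\HC}(\Ymn)\cap\gr Z_p(\Ymn)=\kk[z_s]\cap Z_p(\fg)$ inside $Z(U(\fg))$, the crux is to compute this intersection. Here one uses that $z_s=\sum_i e_{i,i}x^s$ is central with commuting summands, so $z_s^{[p]}=\sum_i e_{i,i}x^{sp}=z_{sp}$ and therefore $z_s^p-z_{sp}=\sum_i\big((e_{i,i}x^s)^p-e_{i,i}x^{sp}\big)\in Z_p(\fg)$; feeding this into the free description (\ref{p-center is free polynomial}) together with Theorem \ref{center of enveloping algebra of shifted current algebra} identifies $\kk[z_s]\cap Z_p(\fg)$ with $\kk[z_s^p-z_{sp};\,s\geq 0]$. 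By (\ref{grading of bc rp}) this coincides with $\gr\kk[bc^{(rp)};\,r>0]$, and since $\kk[bc^{(rp)}]\subseteq Z_{\HC}(\Ymn)\cap Z_p(\Ymn)$ the chain $\kk[z_s^p-z_{sp}]=\gr\kk[bc^{(rp)}]\subseteq\gr(Z_{\HC}(\Ymn)\cap Z_p(\Ymn))\subseteq\kk[z_s^p-z_{sp}]$ collapses; applying principle (ii) yields $Z_{\HC}(\Ymn)\cap Z_p(\Ymn)=\kk[bc^{(rp)};\,r>0]$, which is free polynomial. The main obstacle is precisely this intersection computation: unlike the other clauses it is not a direct symbol match, but requires the fine structure of $Z(U(\fg))$ and the identity $z_s^{[p]}=z_{sp}$ to determine which polynomials in the $z_s$ lie in the $p$-center.
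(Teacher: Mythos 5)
Your proposal is correct and is essentially the paper's own argument: the paper defers to the proof of \cite[Theorem 5.11]{BT18}, which is precisely this passage to the loop-graded algebra, the inclusion $\gr Z(\Ymn)\subseteq Z(U(\fg))$, and the matching of the symbols recorded in Proposition \ref{HC center of Ymn}, (\ref{center 1-111}), (\ref{grading of brp}) and (\ref{grading of bc rp}) against the free generators of $Z(U(\fg))$ from Theorem \ref{center of enveloping algebra of shifted current algebra}, including the same intersection computation $\kk[z_s]\cap Z_p(\fg)=\kk[z_s^p-z_{sp}]$ for clause (4).
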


\begin{Corollary}\label{main coro 1}
The super Yangian $\Ymn$ is free as a module over its center, with
basis given by the ordered supermonomials in
\begin{equation}\label{supermonomial}
\big\{d_i^{(r)};~2 \leq i \leq m+n, r >
0\big\}\cup\big\{e_{i,j}^{(r)},
f_{j,i}^{(r)};~1\leq i < j \leq m+n, r>0\big\}
\end{equation}
in which no exponent is $p$ or more for $d_i^{(r)}$ and $e_{i,j}^{(r)},
f_{j,i}^{(r)}$ with $|i|+|j|=0$.
\end{Corollary}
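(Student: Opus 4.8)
The plan is to deduce the freeness from the PBW basis (Theorem~\ref{PBW for Ymn}) by a double application of the standard \emph{lifting principle} for filtered modules: if $A$ is an algebra with an exhaustive filtration bounded below, $B\subseteq A$ is a filtered subalgebra over which $A$ is a module, and $\{x_\alpha\}\subseteq A$ is a family of filtered elements whose symbols $\{\gr x_\alpha\}$ form a free basis of $\gr A$ as a $\gr B$-module, then $\{x_\alpha\}$ is already a free basis of $A$ as a $B$-module. I would isolate this as a preliminary lemma and apply it twice: first descending along the loop filtration from $Y_{m|n}$ to $U(\fg)$, and then along the PBW-type filtration from $U(\fg)$ to $S(\fg)$.

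The first and most delicate step is to identify the associated graded of the centre. A priori one only has $\gr Z(Y_{m|n})\subseteq Z(\gr Y_{m|n})=Z(U(\fg))$, and the content is to prove equality. For this I would compute the symbols of the free generators of $Z(Y_{m|n})$ produced in Theorem~\ref{main theorem: center of Ymn}: by Proposition~\ref{HC center of Ymn} we have $\gr_{r-1}c^{(r)}=z_{r-1}$; by \eqref{grading of brp} we have $\gr_{rp-p}b_i^{(rp)}=(e_{i,i}x^{r-1})^p-e_{i,i}x^{(r-1)p}$; and by \eqref{center 1-111} we have $\gr(e_{i,j}^{(r)})^p=\pm(e_{i,j}x^{r-1})^p$ and likewise for the $f$'s. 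Comparing with \eqref{free generator in Z(Ugs)}, these symbols are exactly the prescribed free generators of $Z(U(\fg))$ from Theorem~\ref{center of enveloping algebra of shifted current algebra}. Since $Z(Y_{m|n})$ is a free polynomial algebra on the former elements (Theorems~\ref{theorem of off dig center} and \ref{theorem dig center}) and their symbols are algebraically independent generators of $Z(U(\fg))$, a filtered-degree count forces $\gr Z(Y_{m|n})=Z(U(\fg))$.

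With this in hand I would run the lifting principle. First I would treat $U(\fg)$ over $Z(U(\fg))$: filtering $U(\fg)$ by the PBW filtration (so $\gr U(\fg)=S(\fg)$) and checking, exactly as above but one level down, that the symbols of the generators in \eqref{free generator in Z(Ugs)} are the free generators $z_r$ and $(e_{i,j}x^r)^p$ of $S(\fg)^{\fg}$, so that $\gr Z(U(\fg))=S(\fg)^{\fg}$ (using \eqref{graded Z(g_sigma) subset S(g)^g}). Lemma~\ref{lemma symmetric invariant of S(gs)^gs} then states precisely that $S(\fg)$ is free over $S(\fg)^{\fg}$ with basis the restricted monomials $\prod(e_{i,j}x^r)^{\omega}$ having $(i,j)\neq(1,1)$, with $0\le\omega<p$ in the even entries and $\omega\in\{0,1\}$ in the odd ones. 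Lifting this gives that $U(\fg)$ is free over $Z(U(\fg))$ on the corresponding ordered PBW monomials. Second, applying the principle along the loop filtration and using the identification \eqref{identification} (which sends each Drinfeld generator $d_i^{(r)}, e_{i,j}^{(r)}, f_{j,i}^{(r)}$ to a sign times $e_{i,i}x^{r-1}$, $e_{i,j}x^{r-1}$, $e_{j,i}x^{r-1}$), the symbols of the ordered supermonomials in \eqref{supermonomial} with the stated exponent restrictions are exactly this $Z(U(\fg))$-basis of $U(\fg)$; hence those supermonomials form a free basis of $Y_{m|n}$ over $Z(Y_{m|n})$, as asserted.

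The main obstacle is the identification $\gr Z(Y_{m|n})=Z(U(\fg))$ together with its one-step-down analogue $\gr Z(U(\fg))=S(\fg)^{\fg}$: the inclusion $\subseteq$ is automatic, but equality is exactly where the explicit leading-term formulas \eqref{grading of brp} and \eqref{center 1-111} and the freeness statements of Theorems~\ref{theorem of off dig center}, \ref{theorem dig center} and \ref{main theorem: center of Ymn} must be combined, with care taken over the sign factors and over the exterior (odd) directions, where the supermonomial constraint ``exponent $\le 1$'' matches the $B_1$-part of the index set $\Omega$ appearing in Lemma~\ref{lemma symmetric invariant of S(gs)^gs}. Once both graded identifications are secured, the two lifting steps are purely formal.
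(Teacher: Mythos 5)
Your proof is correct, and it reaches the basis by a route that is organized quite differently from the paper's, even though both ultimately rest on passing to the associated graded algebra and on the same leading-term formulas (Proposition~\ref{HC center of Ymn}, \eqref{grading of brp}, \eqref{center 1-111}). The paper first reduces the whole statement, via the PBW basis of Theorem~\ref{PBW for Ymn} and the off-diagonal central elements, to a claim about the commutative diagonal subalgebra $Y_{m|n}^0=\kk[d_i^{(r)}]$ --- namely that ordered monomials in $\{b_i^{(rp)},c^{(r)};\,2\leq i\leq m+n\}$ times ordered monomials in $\{d_i^{(r)};\,2\leq i\leq m+n\}$ with exponents $<p$ give a basis of $Y_{m|n}^0$ --- and verifies this inside $\gr Y_{m|n}^0$ by a uni-triangular transition-matrix argument. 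You instead make the reduction global and uniform: you isolate the identifications $\gr Z(\Ymn)=Z(U(\fg))$ and $\gr Z(U(\fg))=S(\fg)^{\fg}$ (both correct, and correctly deduced from the symbols of the generators together with the automatic inclusion $\gr Z\subseteq Z(\gr\,\cdot\,)$), reuse the free-module decomposition $S(\fg)=\bigoplus_{\omega\in\Omega}I(\fg)\prod(e_{i,j}x^r)^{\omega(i,j,r)}$ established in the proof of Lemma~\ref{lemma symmetric invariant of S(gs)^gs}, and lift twice. Your version buys a cleaner logical structure: the step the paper compresses into ``it suffices to show'' becomes an instance of a single stated lifting lemma, and $\gr Z(\Ymn)=Z(U(\fg))$ is recorded as a reusable intermediate fact (which, as your argument shows, needs only that the listed elements are central plus Theorem~\ref{center of enveloping algebra of shifted current algebra}, not the full freeness assertion of Theorem~\ref{main theorem: center of Ymn}). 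The paper's version buys brevity, since once the diagonal case is settled the remaining bookkeeping is absorbed into the PBW theorem. The only cosmetic slip is that the freeness of $S(\fg)$ over $S(\fg)^{\fg}$ with the $\Omega$-indexed basis sits inside the proof of Lemma~\ref{lemma symmetric invariant of S(gs)^gs} rather than in its statement; but that is exactly the fact you need, and it is available there.
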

\begin{proof}
Let $Y_{m|n}^0$ denote the subalgebra of $Y_{m|n}$ generated by the elements $\{d_i^{(r)}\}$.
We consider the sets
\begin{align}\label{set 1}
\{b_i^{(rp)}, c^{(r)};~2\leq i\leq m+n,r>0\}
\end{align}
and
\begin{align}\label{set 2}
\{d_i^{(r)};~2 \leq i \leq m+n, r >0\}.
\end{align}
It suffices to show that the set consisting of ordered monomials in (\ref{set 1})
multiplied by ordered monomials in (\ref{set 2}) with exponents $<p$ gives a basis for $Y_{m|n}^0$.
To see this, we pass to the associated graded algebra using
(\ref{identification}), (\ref{grading of brp}) and Proposition \ref{HC center of Ymn}
to reduce to showing that the monomials
$$
\prod_{r \geq 0}
z_r^{a_{1,r}}
\prod_{\substack{2 \leq i\leq m+n\\r \geq 0}}
\big((e_{i,i} t^r)^p - e_{i,i}t^{rp}\big)^{a_{i,r}}\!\!
\prod_{\substack{2\leq i \leq m+n\\r \geq 0}}
(e_{i,i} t^r)^{b_{i,r}}
$$
for $a_{i,r}\geq 0$ and $0\leq b_{i,r}<p$ form a basis for $\gr Y_{m|n}^0$.
This is quite straightforward: these monomials are related to
a PBW basis of $\gr Y_{m|n}^0$ by a uni-triangular transition matrix.
\end{proof}

Similarly, we have:
\begin{Corollary}\label{main coro 2}
The super Yangian $\Ymn$ is free as a module over $Z_p(\Ymn)$ with basis
given by the ordered supermonomials in
$$
\big\{d_i^{(r)};~1\leq i\leq m+n, r >
0\big\}\cup\big\{e_{i,j}^{(r)},
f_{j,i}^{(r)};~1\leq i < j \leq m+n, r>0\big\}
$$
in which no exponent is $p$ or more for $d_i^{(r)}$ and $e_{i,j}^{(r)},
f_{j,i}^{(r)}$ with $|i|+|j|=0$.
\end{Corollary}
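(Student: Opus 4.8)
The plan is to follow the proof of Corollary~\ref{main coro 1} almost verbatim; the only structural change is that the Harish--Chandra generators $\{c^{(r)}\}$ are dropped from the coefficient ring, so the diagonal index $i=1$ is now governed by the $p$-central element $b_1^{(rp)}$ rather than by $c^{(r)}$, and correspondingly $d_1^{(r)}$ re-enters the list of basis generators. Since the generators (\ref{p-center}) of $Z_p(\Ymn)$ are central and, by Theorem~\ref{main theorem: center of Ymn}(2), algebraically independent, it suffices to prove that the set
$$
\mathcal{B}:=\{\text{ordered monomials in the generators (\ref{p-center})}\}\cdot\{\text{reduced ordered supermonomials}\}
$$
is a $\kk$-basis of $\Ymn$, where ``reduced'' means that no exponent is $\geq p$ on any $d_i^{(r)}$ or on any even $e_{i,j}^{(r)}, f_{j,i}^{(r)}$ (the exponent of an odd generator being automatically $\leq 1$). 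Granting this, the reduced supermonomials constitute a free $Z_p(\Ymn)$-module basis of $\Ymn$.

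First I would pass to the associated graded algebra for the loop filtration, where $\gr\Ymn\cong U(\fg)$ by Lemma~\ref{lemma U(g) isomorphic to graded of loop filtration}. By the symbol computations (\ref{identification}), (\ref{center 1-111}) and (\ref{grading of brp}), the symbols of the generators (\ref{p-center}) are, up to signs, the elements
$$
(e_{i,i}x^{s})^p-e_{i,i}x^{sp}\ \ (1\leq i\leq m+n),\qquad (e_{i,j}x^{s})^p,\ (e_{j,i}x^{s})^p\ \ (|i|+|j|=0),
$$
which generate precisely the $p$-center $Z_p(\fg)$ of (\ref{p-center is free polynomial}), as already noted after the definition (\ref{p-center}); and the symbols of the reduced supermonomials are the corresponding reduced supermonomials in $\{e_{i,j}x^{s}\}$. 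Because taking symbols is multiplicative, the set of symbols $\gr\mathcal{B}$ consists of the products of $Z_p(\fg)$-monomials with reduced PBW supermonomials in $U(\fg)$. Thus it is enough to show that $U(\fg)$ is free over $Z_p(\fg)$ with basis the reduced PBW supermonomials, and then to lift back through the loop filtration by the standard graded-to-filtered argument (a spanning set whose symbols form a basis is itself a basis).

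The freeness of $U(\fg)$ over $Z_p(\fg)$ I would establish by the same uni-triangular transition-matrix argument used in Corollary~\ref{main coro 1} and in Lemma~\ref{lemma symmetric invariant of S(gs)^gs}: passing now to $S(\fg)=\gr U(\fg)$, the symbol of a generator $(e_{i,j}x^{s})^p-\delta_{i,j}e_{i,j}x^{sp}$ is $(e_{i,j}x^{s})^p$, so that in the commutative/exterior algebra $S(\fg)$ one only has to observe that $\kk[y]$ is free over $\kk[y^p]$ with basis $1,y,\dots,y^{p-1}$ for each even generator $y=e_{i,j}x^{s}$, while the odd generators contribute exterior factors of exponent $\leq 1$ with nothing to quotient. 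Writing each even-generator exponent in base $p$ as $pc+b$ with $0\leq b<p$ sets up a bijection between $\{(\text{monomials in the }(e_{i,j}x^{s})^p)\cdot(\text{reduced})\}$ and the full monomial basis of $S(\fg)$, and the transition matrix is uni-triangular, hence invertible; this is exactly the analysis already carried out in Lemma~\ref{lemma symmetric invariant of S(gs)^gs} and Theorem~\ref{center of enveloping algebra of shifted current algebra}.

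The computation is routine and presents no genuine obstacle; the only point meriting attention---and the sole difference from Corollary~\ref{main coro 1}---is that the diagonal index $i=1$ is now treated on exactly the same footing as the others through $b_1^{(rp)}$, which in fact makes the present statement marginally cleaner than its predecessor, since no interplay between the Harish--Chandra center and the diagonal $p$-center has to be disentangled. One should nonetheless record the bookkeeping that separates the base-$p$ reductions of the even generators from the exterior contributions of the odd ones, exactly as in Theorem~\ref{theorem of off dig center} and Theorem~\ref{theorem dig center}, to be sure that the reduced supermonomials (and not merely reduced monomials) furnish the module basis.
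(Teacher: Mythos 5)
Your argument is correct and takes essentially the same route as the paper, whose proof of this corollary is simply the proof of Corollary~\ref{main coro 1} with $c^{(r)}$ replaced by $b_1^{(rp)}$ and $d_1^{(r)}$ restored to the basis: pass to the loop-graded algebra $U(\fg)$, identify the symbols of the generators (\ref{p-center}) with the generators of $Z_p(\fg)$ via (\ref{identification}), (\ref{center 1-111}) and (\ref{grading of brp}), and conclude by the base-$p$/uni-triangular transition-matrix argument in $S(\fg)$. The only cosmetic difference is that you run the reduction on all of $U(\fg)$ at once rather than isolating the diagonal subalgebra $Y_{m|n}^0$ and invoking Theorem~\ref{theorem of off dig center} for the off-diagonal factors, which changes nothing of substance.
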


\section{\boldmath Modular super Yangian of $\mathfrak{sl}_{m|n}$}\label{section:special super Yangian}
In this section, we define the modular version of the super Yangian of $\mathfrak{sl}_{m|n}$,
which is a subalgebra $\SYmn$ of $\Ymn$.
We give a presentation for $\SYmn$ valid in any characteristic by using the diagonal elements defined in (\ref{definition h}).
We will show that this presentation is equivalent to the usual Drinfeld-type presentation (see \cite[Proposition]{Gow07}, \cite[Section 2.5]{Tsy20})
whenever $\Char\kk\neq 2$.
\subsection{The special super Yangian}
We define the special super Yangian associated to the special linear Lie superalgebra $\mathfrak{sl}_{m|n}$ as the following subalgebra of $\Ymn$:
\begin{equation}\label{definition of SYmn}
SY_{m|n}:= \{x\in Y_{m|n};~\mu_f(x)=x~\text{for all }f(u) \in 1+u^{-1} \kk[[u^{-1}]]\},
\end{equation}
where we take $\mu_f$ as defined as in Section \ref{Section Automorphisms}(1).

Let $\fg':=\mathfrak{sl}_{m|n}[x]$ be the current superalgebra associated to $\mathfrak{sl}_{m|n}$.
The following theorem is a generalization and modular analogue of \cite[Proposition 3, Lemma 7]{Gow07}.
\begin{Theorem}\label{PBW SYmn}
The algebra $\SYmn$ has a basis consisting of ordered supermonomials in
\begin{equation}\label{Symn theorem 1-1}
\big\{h_i^{(r)};~1\leq i <m+n, r > 0\big\}\cup
\big\{e_{i,j}^{(r)}, f_{j,i}^{(r)};~1 \leq i< j \leq m+n, r
> 0\big\}
\end{equation}
taken in any fixed order.
Hence, $\gr\SYmn = U(\fg')$,
and multiplication defines a vector space isomorphism
\begin{equation}\label{Symn theorem 1-2}
\SYmn \otimes Y_1 \stackrel{\sim}{\rightarrow} Y_{m|n}
\end{equation}
where $Y_1$ is identified with the subalgebra of $Y_{m|n}$ generated by
the elements
$d_1^{(r)}$ in the obvious way.
If we assume in addition that $p \nmid (m-n)$ then multiplication defines an algebra isomorphism
\begin{equation}\label{Symn theorem 1-3}
\SYmn \otimes Z_{\HC}(Y_{m|n}) \stackrel{\sim}{\rightarrow}\Ymn.
\end{equation}
\end{Theorem}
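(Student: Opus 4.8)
The plan is to realize $\SYmn$ as the span of the ordered supermonomials in (\ref{Symn theorem 1-1}) by combining the PBW theorem for $\Ymn$ with an invariance argument for the automorphisms $\mu_f$. First I would record symbols. By Section~\ref{Section Automorphisms}(1) every $h_i^{(r)}$, $e_{i,j}^{(r)}$ and $f_{j,i}^{(r)}$ is fixed by each $\mu_f$, so the subalgebra $S\subseteq\Ymn$ they generate lies in $\SYmn$. By (\ref{identification}) the symbols $\gr_{r-1}e_{i,j}^{(r)}$ and $\gr_{r-1}f_{j,i}^{(r)}$ are, up to sign, the off-diagonal elements $e_{i,j}x^{r-1}$, while (\ref{grading of hi r+1}) gives $\gr_{r-1}h_i^{(r)}=e_{i,i}x^{r-1}-(-1)^{|i|+|i+1|}e_{i+1,i+1}x^{r-1}$. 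These form a homogeneous basis of $\fg'=\mathfrak{sl}_{m|n}[x]\subseteq\fg$; adjoining $\gr_{r-1}d_1^{(r)}=e_{1,1}x^{r-1}$ (which lies outside $\fg'$, since $e_{1,1}$ has supertrace $1$) yields a homogeneous basis of all of $\fg$.

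Next I would reorganize the PBW theorem. Passing to $\gr\Ymn=U(\fg)$ and using that the symbols above form a basis of $\fg$, a triangular change of the diagonal generators $d_i^{(r)}$ to $\{d_1^{(r)},h_i^{(r)}\}$ converts the basis of Theorem~\ref{PBW for Ymn} into the statement that the ordered supermonomials in (\ref{Symn theorem 1-1}) together with the $d_1^{(r)}$ form a basis of $\Ymn$; one verifies this by comparing top-degree symbols with the PBW basis of $U(\fg)$. Ordering the $d_1^{(r)}$ last, this is precisely the assertion that multiplication $S\otimes Y_1\to\Ymn$ is a linear isomorphism, where $Y_1=\kk[d_1^{(r)};~r>0]$.

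Then comes the key invariance step. Fix a basis $\{s_\alpha\}$ of $S$; by the previous paragraph each $x\in\Ymn$ is uniquely $x=\sum_\alpha s_\alpha y_\alpha$ with $y_\alpha\in Y_1$. Since $\mu_f$ preserves $Y_1$ and fixes each $s_\alpha$, we get $\mu_f(x)=\sum_\alpha s_\alpha\mu_f(y_\alpha)$, so by uniqueness $x\in\SYmn$ iff every $y_\alpha$ lies in $Y_1^{\mu}:=\{y\in Y_1;~\mu_f(y)=y\ \forall f\}$; hence $\SYmn=S\otimes Y_1^{\mu}$. It remains to check $Y_1^{\mu}=\kk$: if $P\in Y_1^{\mu}$ involves $d_1^{(d)}$ with $d$ maximal, then $f=1+au^{-d}$ sends $d_1^{(d)}\mapsto d_1^{(d)}+a$ while fixing every $d_1^{(r)}$ with $r<d$ occurring in $P$, and invariance under all $a\in\kk$ (an infinite field) forces $P$ to be independent of $d_1^{(d)}$, whence $P\in\kk$. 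This gives $\SYmn=S$, which is the basis statement, yields $\gr\SYmn=U(\fg')$, and is exactly (\ref{Symn theorem 1-2}).

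Finally, for (\ref{Symn theorem 1-3}) assume $p\nmid(m-n)$. As $Z_\HC(\Ymn)$ is central, multiplication $\SYmn\otimes Z_\HC(\Ymn)\to\Ymn$ is an algebra homomorphism; it is filtered for the loop filtration, and by Proposition~\ref{HC center of Ymn} its associated graded is the multiplication map $U(\fg')\otimes\kk[z_r;~r\geq 0]\to U(\fg)$. The hypothesis means $\operatorname{str}(I)=m-n\neq 0$, so $I\notin\mathfrak{sl}_{m|n}$ and $\fg=\fg'\oplus\fz(\fg)$ with $\fz(\fg)=\bigoplus_{r\geq 0}\kk z_r$; hence $U(\fg)=U(\fg')\otimes\kk[z_r;~r\geq 0]$, the graded map is an isomorphism, and therefore so is the original map. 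I expect the main obstacle to be the invariance step, namely matching the $\mu_f$-action with the $S\otimes Y_1$ decomposition and proving $Y_1^{\mu}=\kk$ in characteristic $p$ (where one cannot simply pass to a derivation), together with isolating the precise role of the arithmetic condition $p\nmid(m-n)$ in the last part.
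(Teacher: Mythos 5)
Your proposal is correct and follows essentially the same route as the paper's proof, which itself defers to \cite[Theorem 6.1]{BT18}: pass to symbols to see that the ordered supermonomials in (\ref{Symn theorem 1-1}) together with the $d_1^{(r)}$ give a PBW basis of $\Ymn$, then use $\mu_f$-invariance of the decomposition to force the $Y_1$-components into $Y_1^{\mu}=\kk$, and handle (\ref{Symn theorem 1-3}) by the splitting $\fg=\fg'\oplus\fz(\fg)$ under $p\nmid(m-n)$. You in fact supply more detail than the paper does for the inclusion $\SYmn\subseteq\overline{\SYmn}$ (the explicit $f=1+au^{-d}$ argument), and this matches the argument in the cited source.
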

\begin{proof}
The proof is the same as in the non-super case (see \cite[Theorem 6.1]{BT18}).
We just give a brief indication.

First note that all $h_i^{(r)}$ belong to $\SYmn$ by the definitions of $h_i(u)$ (\ref{definition h}) and the automorphism $\mu_f$ (Section \ref{Section Automorphisms}(1)),
and of course all $e_{i,j}^{(r)}$ and $ f_{j,i}^{(r)}$ belong to $\SYmn$ too.
Let $\overline{\SYmn}$ be the subspace of $\SYmn$ spanned by the ordered supermonomials in the elements (\ref{Symn theorem 1-1}).
Passing to the associated graded space induced by the filtration of $\Ymn$ and using (\ref{identification}) and (\ref{grading of hi r+1}),
it follows that $\gr\overline{\SYmn}=U(\fg')$ and thus the ordered supermonomials that span $\overline{\SYmn}$ are in particular linearly independent.
Then, multiplying them by ordered monomials in $\{d_1^{(r)};~r>0\}$ gives the following isomorphism:
$$\overline{\SYmn}\otimes k[d_1^{(r)};~r>0]\cong\Ymn.$$
Furthermore, we can use the above isomorphism to show that the inclusion $\SYmn\subseteq\overline{\SYmn}$.
This proves the isomorphism (\ref{Symn theorem 1-2}).
Finally, the assumption $p \nmid (m-n)$ ensures that the elements
$$\{e_{i,i}x^r-(-1)^{|i|+|i+1|}e_{i+1,i+1}x^{r};~1\leq i<m+n\}\cup\{z_r;~r\geq 0\}\cup\{e_{i,j} x^r;~1 \leq i\neq j \leq m+n, r \geq 0\}$$
form a basis of $\fg$. By considering the associated graded algebra, the isomorphism (\ref{Symn theorem 1-3}) can be proven similarly.
\end{proof}

In view of (\ref{induction eij,fij}),
Theorem \ref{PBW SYmn} implies that $\SYmn$ can be generated by the elements $\{h_i^{{r}},e_i^{(r)},f_{i}^{(r)}\}$.
Then we have the following presentation for the subalgebra $\SYmn$.

\begin{Theorem}\label{Theorem: presentaiton of SYmn}
The algebra $\SYmn$ is generated by the elements
\begin{equation}\label{beany}
\big\{h_i^{(r)}, e_i^{(r)}, f_i^{(r)};~1\leq i <m+n, r > 0\big\}
\end{equation}
subject only to the relations (\ref{Drinfeld generators relation 4})-(\ref{Drinfeld generators relation 13}) plus the following:
\begin{align}
\big[h_i^{(r)}, h_j^{(s)}\big] &= 0,\label{SYmn presentation relation 1}\\
\big[e_i^{(r)}, f_j^{(s)}\big] &=(-1)^{|i|+|i+1|}\delta_{i,j}h_i^{(r+s-1)},\label{SYmn presentation relation 2}\\
\big[h_i^{(r)}, e_j^{(s)}\big] &= 0\hspace{18mm}\text{if $|i-j|>1$},\label{SYmn presentation relation 3}\\
\big[h_i^{(r)}, f_j^{(s)}\big] &= 0\hspace{18mm}\text{if $|i-j|>1$},\label{SYmn presentation relation 4}\\
\big[h_{i-1}^{(r+1)}, e_i^{(s)}\big]-\big[h_{i-1}^{(r)}, e_i^{(s+1)}\big]&=(-1)^{|i|}h_{i-1}^{(r)}e_i^{(s)},\label{SYmn presentation relation 5}\\
\big[h_{i-1}^{(r)}, f_i^{(s+1)}\big]-\big[h_{i-1}^{(r+1)}, f_i^{(s)}\big]&=(-1)^{|i|}f_i^{(s)}h_{i-1}^{(r)},\label{SYmn presentation relation 6}\\
\big[h_{i}^{(r+1)}, e_i^{(s)}\big]-\big[h_{i}^{(r)}, e_i^{(s+1)}\big]&=\left\{
\begin{array}{ll}
-(-1)^{i+1}\left(h_{i}^{(r)}e_i^{(s)}+e_i^{(s)}h_{i}^{(r)}\right),&\text{if $i\neq m$},\label{SYmn presentation relation 7}\\
0&\text{if $i=m$},\\
\end{array}\right.\\
\big[h_{i}^{(r)}, f_i^{(s+1)}\big]-\big[h_{i}^{(r+1)},f_i^{(s)}\big]&=\left\{
\begin{array}{ll}
-(-1)^{i+1}\left(f_{i}^{(s)}h_i^{(s)}+h_{i}^{(r)}f_i^{(s)}\right),&\text{if $i\neq m$},\label{SYmn presentation relation 8}\\
0&\text{if $i=m$},\\
\end{array}\right.\\
\big[h_{i+1}^{(r+1)}, e_i^{(s)}\big]-\big[h_{i+1}^{(r)},e_i^{(s+1)}\big]&=(-1)^{|i+1|}e_i^{(s)}h_{i+1}^{(r)},\label{SYmn presentation relation 9}\\
\big[h_{i+1}^{(r)}, f_i^{(s+1)}\big]-\big[h_{i+1}^{(r+1)}, f_i^{(s)}\big]&=(-1)^{|i+1|}h_{i+1}^{(r)}f_i^{(s)},\label{SYmn presentation relation 10}
\end{align}
for all admissible $i,j,r,s$ including $r=0$ in
(\ref{SYmn presentation relation 5})--(\ref{SYmn presentation relation 10}); remember also $h_i^{(0)}=-(-1)^{|i|}$.
\end{Theorem}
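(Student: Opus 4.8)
The plan is to present the abstractly defined algebra---call it $\widehat{SY}_{m|n}$, the associative superalgebra on generators (\ref{beany}) subject to the relations (\ref{Drinfeld generators relation 4})--(\ref{Drinfeld generators relation 13}) and (\ref{SYmn presentation relation 1})--(\ref{SYmn presentation relation 10})---and to construct an isomorphism $\Phi\colon\widehat{SY}_{m|n}\xrightarrow{\sim}\SYmn$. First I would check that the genuine elements $h_i^{(r)},e_i^{(r)},f_i^{(r)}\in\SYmn$ satisfy every listed relation, which yields a surjective homomorphism $\Phi\colon\widehat{SY}_{m|n}\twoheadrightarrow\SYmn$. The relations (\ref{Drinfeld generators relation 4})--(\ref{Drinfeld generators relation 13}) hold by Theorem \ref{theorem Drinfeld presentation}. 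Relation (\ref{SYmn presentation relation 1}) follows because each $h_i(u)$ is a product of the mutually commuting series $d_i(u)^{\pm1}$ by (\ref{di dj commu}). Relation (\ref{SYmn presentation relation 2}) is a rewriting of (\ref{Drinfeld generators relation 3}): since $d_i$ and $d_{i+1}$ commute, the coefficient of $u^{-(r+s-1)}$ in $d_{i+1}(u)d_i(u)^{-1}$ equals $\sum_t d_i'^{(t)}d_{i+1}^{(r+s-1-t)}$, whence $(-1)^{|i|+|i+1|}h_i^{(r+s-1)}$ reproduces the right-hand side of (\ref{Drinfeld generators relation 3}). Relations (\ref{SYmn presentation relation 3})--(\ref{SYmn presentation relation 4}) follow from (\ref{identities in Ymn-3}), as $h_i$ involves only $d_i,d_{i+1}$, which supercommute with $e_j,f_j$ when $|i-j|>1$. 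Finally, (\ref{SYmn presentation relation 5})--(\ref{SYmn presentation relation 10}) come from extracting the $u^{-r-1}v^{-s}$- and $u^{-r}v^{-s-1}$-coefficients of the generating-series identities of Lemma \ref{h e bracket relation} via (\ref{gv-gu/u-v}), the degenerate $i=m$ cases of (\ref{SYmn presentation relation 7})--(\ref{SYmn presentation relation 8}) being the vanishing $[h_m(u),e_m(v)]=0$ recorded in Corollary \ref{corollary: h e bracket}.

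Next I would show these relations are complete, i.e. $\Phi$ is injective. Equip $\widehat{SY}_{m|n}$ with the loop filtration by declaring $\deg h_i^{(r)}=\deg e_i^{(r)}=\deg f_i^{(r)}=r-1$, and write $\bar h_i^{(r)},\bar e_i^{(r)},\bar f_i^{(r)}$ for the symbols of the generators in $\gr\widehat{SY}_{m|n}$. Inspecting each defining relation, its top-degree component either vanishes or reduces to a supercommutator identity: relation (\ref{SYmn presentation relation 2}) degenerates to $[\bar e_i^{(r)},\bar f_j^{(s)}]=(-1)^{|i|+|i+1|}\delta_{ij}\bar h_i^{(r+s-1)}$, relations (\ref{SYmn presentation relation 5})--(\ref{SYmn presentation relation 10}) become loop-type identities of the shape $[\bar h_i^{(r+1)},\bar e_j^{(s)}]=[\bar h_i^{(r)},\bar e_j^{(s+1)}]$ fixing $[\bar h_i^{(r)},\bar e_j^{(s)}]$ up to the correct (super) structure constant, and the Serre relations become the Serre relations of $\mathfrak{sl}_{m|n}[x]$. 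This exhibits a surjection $\pi\colon U(\fg')\twoheadrightarrow\gr\widehat{SY}_{m|n}$ matching the loop--Chevalley generators $h_i x^{r-1},e_i x^{r-1},f_i x^{r-1}$ of $\fg'=\mathfrak{sl}_{m|n}[x]$ with $\bar h_i^{(r)},\bar e_i^{(r)},\bar f_i^{(r)}$.

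I would then close the argument by a sandwiching comparison in the associated graded. Since $\Phi$ is filtered it induces $\gr\Phi\colon\gr\widehat{SY}_{m|n}\to\gr\SYmn$, and by Theorem \ref{PBW SYmn} together with (\ref{identification}) and (\ref{grading of hi r+1}) we have $\gr\SYmn=U(\fg')$ with $\gr\Phi(\bar h_i^{(r)})=h_i x^{r-1}$, and likewise for $\bar e_i^{(r)},\bar f_i^{(r)}$. Hence the composite $\gr\Phi\circ\pi\colon U(\fg')\to U(\fg')$ fixes the Chevalley generators and is therefore the identity. Consequently $\pi$ is injective, hence an isomorphism, and $\gr\Phi=\pi^{-1}$ is an isomorphism; since the loop filtration is exhaustive and bounded below, $\Phi$ itself is an isomorphism, proving the presentation.

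I expect the main obstacle to be the completeness half---verifying that \emph{every} defining relation degenerates correctly so that $\pi$ is well defined and surjective onto $\gr\widehat{SY}_{m|n}$---while keeping precise track of the parity sign factors and treating the odd simple root $i=m$ separately, where (\ref{SYmn presentation relation 7})--(\ref{SYmn presentation relation 8}) and the $e,f$-relations take their degenerate forms. The surjectivity half is comparatively routine once Lemma \ref{h e bracket relation} and Corollary \ref{corollary: h e bracket} are in hand.
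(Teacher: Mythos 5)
Your proposal is correct and follows essentially the same route as the paper: the relations are verified in $\SYmn$ by extracting coefficients from Lemma \ref{h e bracket relation} and Corollary \ref{corollary: h e bracket} (the paper uses the anti-automorphism $\tau$ for the $f$-side), and completeness is settled by the filtered comparison with $U(\fg')$ via the PBW basis of Theorem \ref{PBW SYmn}, which is precisely the argument of \cite[Theorem 6.3]{BT18} that the paper cites. The one point to handle with care in your "sandwich" step is that the surjection $U(\fg')\twoheadrightarrow\gr\widehat{SY}_{m|n}$ must be constructed by defining all graded root vectors $\bar e_{i,j}^{(r)},\bar f_{j,i}^{(r)}$ through iterated brackets and verifying the full bracket table of $\fg'$ (as in Proposition \ref{theta is injective}), rather than by appealing to Chevalley--Serre relations, since $U(\fg')$ need not admit a Serre presentation in characteristic $p$.
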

\begin{proof}
Clearly, (\ref{Drinfeld generators relation 4})-(\ref{Drinfeld generators relation 13}) hold,
and the relations (\ref{SYmn presentation relation 1}),
(\ref{SYmn presentation relation 3})-(\ref{SYmn presentation relation 4})
follow from (\ref{di dj commu})-(\ref{Drinfeld generators relation 2}).
Referring to the definition (\ref{definition h}), we have
$$h_i^{(r+s-1)}=-(-1)^{|i|}\sum_{t=0}^{r+s-1}d_i'^{(t)}d_{i+1}^{(r+s-1-t)}.$$
Now the relation (\ref{SYmn presentation relation 2}) follows directly from (\ref{Drinfeld generators relation 3}).
For the remaining relations, (\ref{SYmn presentation relation 5}), (\ref{SYmn presentation relation 7}) and (\ref{SYmn presentation relation 9})
follow by taking coefficients of $u^{-r}v^{-s}$
in (\ref{h e bracket relation 3}), (\ref{coro 1}) and (\ref{h e bracket relation 5}), respectively.
Then (\ref{SYmn presentation relation 7}), (\ref{SYmn presentation relation 8}) and (\ref{SYmn presentation relation 10}) follow by applying the anti-automorphism $\tau$.
For the rest of the proof, one can show by the same argument as \cite[Theorem 6.3]{BT18}.
\end{proof}

Suppose that $\Char\kk=p\neq 2$.
We have the Drinfeld presentation for the super Yangian $\SYmn$ (cf. \cite[Definition 2]{Stu94}, \cite[Proposition 5]{Gow07} and \cite[Section 2.5]{Tsy20}). We shall use the ``opposite'' presentation (See \cite[Remark 5.12]{BK05} for the Yangian $Y(\mathfrak{sl}_n)$).
In more detail,
we define $\kappa_{i,s},\xi_{i,s}^{\pm}$ for $i=1,\dots,m+n-1$ and $s\geq 0$ from the equations
\begin{align*}
\kappa_i(u)=\sum_{s \geq 0} \kappa_{i,s}u^{-s-1}&:=(-1)^{|i|}+\eta_{(-1)^{|i|}(i-m)/2}(h_i(u)),\\
\xi_i^+(u)=\sum_{s \geq 0} \xi_{i,s}^+ u^{-s-1} &:=\eta_{(-1)^{|i|}(i-m)/2}(e_i(u)),\\
\xi_i^-(u)=\sum_{s \geq 0} \xi_{i,s}^- u^{-s-1} &:=\eta_{(-1)^{|i|}(i-m)/2}(f_i(u)).
\end{align*}
To ease notation we let $(a_{i,j})_{i,j=1}^{m+n-1}$ be the Cartan matrix associated to the Lie superalgebra $\mathfrak{sl}_{m|n}$,
that is, $a_{i,j}=(-1)^{|i|}(\delta_{i,j}-\delta_{i,j+1})-(-1)^{|i+1|}(\delta_{i+1,j}-\delta_{i+1,j+1})$.

\begin{Proposition}
The Yangian $\SYmn$ is generated by the elements $\{\kappa_{i,s},\xi_{i,s}^{\pm};~1\leq i<m+n, s\geq 0\}$ subject only to the following relations:
\begin{align}
\big[\kappa_{i,r}, \kappa_{j,s}\big] &= 0, \label{D-SYmn-1}\\
\big[\xi^+_{i,r}, \xi^-_{j,s}\big] &= (-1)^{|i|+|i+1|}\delta_{i,j} \kappa_{i,r+s},\label{D-SYmn-2}\\
\big[\kappa_{i,0}, \xi^{\pm}_{j,s}\big] &= \pm(-1)^{|i|} a_{i,j} \xi^{\pm}_{j,s},\label{D-SYmn-3}\\
\big[\kappa_{i,r},\xi^\pm_{j,s+1}\big] - \big[\kappa_{i,s+1}, \xi^{\pm}_{j,l}\big]
&= \pm \frac{a_{i,j}}{2} (\kappa_{i,r} \xi^{\pm}_{j,s} + \xi^{\pm}_{j,s} \kappa_{i,r}),~\text{for $i,j$ not both $m$},\label{D-SYmn-4}\\
\big[\kappa_{m,r+1},\xi^{\pm}_{m,s}\big]&=0,\label{D-SYmn-5}\\
\big[\xi^\pm_{i,r}, \xi^\pm_{j,s+1}\big] - \big[\xi^\pm_{i,r+1}, \xi^\pm_{j,s}\big]
&= \pm \frac{a_{i,j}}{2} (\xi^\pm_{i,r} \xi^{\pm}_{j,s} +
\xi^\pm_{j,s} \xi^\pm_{i,r}),~\text{for $i,j$ not both $m$}\label{D-SYmn-6}\\
\big[\xi^{\pm}_{m,r},\xi^{\pm}_{m,s}\big]&=0,\label{D-SYmn-7}\\
\big[\xi_{i,r}^{\pm}, \xi_{j,s}^\pm\big] &= 0\text{ if $|i-j|>1$,}\label{D-SYmn-8}\\
\Big[\xi_{i,r}^{\pm}, \big[\xi_{i,s}^{\pm}, \xi_{j,t}^\pm\big]\Big]
+
\Big[\xi_{i,s}^{\pm}, \big[\xi_{i,r}^{\pm}, \xi_{j,t}^\pm\big]\Big]
&= 0\text{ if $|i-j|=1$,}\label{D-SYmn-9}\\
\Big[\big[\xi_{m-1,r}^{\pm},\xi_{m,0}^{\pm}\big],\big[\xi_{m,0}^{\pm}, \xi_{m+1,r}^{\pm}\big]\Big]&=0.\label{D-SYmn-10}
\end{align}
\end{Proposition}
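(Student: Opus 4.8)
The plan is to deduce this Drinfeld presentation from the one already established in Theorem~\ref{Theorem: presentaiton of SYmn} by means of the invertible change of generators built into the definitions of $\kappa_{i,s}$ and $\xi_{i,s}^{\pm}$. Since each of $\kappa_i(u),\xi_i^{\pm}(u)$ is obtained from $h_i(u),e_i(u),f_i(u)$ by applying the translation automorphism $\eta_{c_i}$ of Section~\ref{Section Automorphisms}(4) with $c_i=(-1)^{|i|}(i-m)/2$ (together with the central scalar $(-1)^{|i|}$ in the case of $\kappa_i$), the expansion $\xi_{i,s}^{+}=e_i^{(s+1)}+(\text{lower-order }e_i^{(r)})$, and likewise for $\xi^{-}$ and $\kappa$, shows that the coefficients $\kappa_{i,s},\xi_{i,s}^{\pm}$ and $h_i^{(r)},e_i^{(r)},f_i^{(r)}$ are related by a unitriangular $\kk$-linear transformation with respect to the loop filtration; here we use that $\Char\kk\neq 2$, so that $c_i\in\kk$ makes sense. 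It therefore suffices to prove that the two families of relations are equivalent, after which the claim follows from the universal property of Theorem~\ref{Theorem: presentaiton of SYmn}.

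First I would check that $\kappa_{i,s},\xi_{i,s}^{\pm}$ satisfy (\ref{D-SYmn-1})--(\ref{D-SYmn-10}). The commutativity (\ref{D-SYmn-1}) and the relation (\ref{D-SYmn-2}) are immediate from (\ref{SYmn presentation relation 1}) and (\ref{SYmn presentation relation 2}): on the diagonal $i=j$ the two factors carry the same shift $c_i$, and the added scalar is central. The relation (\ref{D-SYmn-8}) comes from (\ref{Drinfeld generators relation 0}); the relation (\ref{D-SYmn-7}) comes from (\ref{Drinfeld generators relation 4}) with $j=m$, where the odd generator $e_m$ forces the supercommutator to vanish for $p\neq 2$ (Remark~\ref{rk2}); and the Serre relations (\ref{D-SYmn-9})--(\ref{D-SYmn-10}) are inherited from (\ref{Drinfeld generators relation 8})--(\ref{Drinfeld generators relation 13}). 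Note that at the odd node $c_m=0$, so $\xi_{m,0}=e_m^{(1)}$ on the nose, which makes (\ref{D-SYmn-10}) transparent, while the remaining Serre relations are recovered by combining the shifted quadratic relations with the super-Jacobi identity, exactly as in the non-super Yangian.

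The heart of the matter is the mixed relations (\ref{D-SYmn-3})--(\ref{D-SYmn-6}), which repackage the three asymmetric relations (\ref{SYmn presentation relation 5}), (\ref{SYmn presentation relation 7}), (\ref{SYmn presentation relation 9}) together with (\ref{Drinfeld generators relation 4}) and (\ref{Drinfeld generators relation 6}) into the single symmetric Cartan-matrix shape governed by $a_{i,j}$. I would argue at the level of generating series, feeding in the \emph{averaged} identities of Corollary~\ref{corollary: h e bracket}: the half-integer offsets $\pm\tfrac{(-1)^{|i|}}{2}$ appearing in (\ref{coro2}) and (\ref{coro3}) match, up to sign, the differences $c_i-c_{i\pm 1}$ of the translation parameters, so that after the substitutions $u\mapsto u-c_i$ and $v\mapsto v-c_j$ these identities collapse to the symmetric form $\pm\frac{a_{i,j}}{2}(\kappa\xi+\xi\kappa)$, yielding (\ref{D-SYmn-4}) and (\ref{D-SYmn-6}); extracting the leading term of the same generating-series identity gives (\ref{D-SYmn-3}). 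At the odd node one has $a_{m,m}=0$, and the vanishing case of (\ref{coro 1}) together with (\ref{SYmn presentation relation 7}) for $i=m$ produces (\ref{D-SYmn-5}). I expect this bookkeeping---the interaction of the half-integer shifts with the parity signs $(-1)^{|i|}$, and the verification that the asymmetric ``$r+1$ versus $s+1$'' relations symmetrise correctly---to be the main obstacle, just as in \cite[Theorem~6.3]{BT18} and \cite[Remark~5.12]{BK05}.

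Finally, running the same computation with $\eta_{c_i}$ replaced by $\eta_{-c_i}$ shows that the defining relations of Theorem~\ref{Theorem: presentaiton of SYmn} hold among the elements $\eta_{-c_i}(\kappa_i),\eta_{-c_i}(\xi_i^{\pm})$ inside the abstract algebra presented by (\ref{D-SYmn-1})--(\ref{D-SYmn-10}). This produces a two-sided inverse to the evident surjection of that abstract algebra onto $\SYmn$, which completes the proof.
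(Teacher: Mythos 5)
Your proposal is correct and follows essentially the same route as the paper: both treat the new generators as the old Drinfeld generators of Theorem \ref{Theorem: presentaiton of SYmn} reparametrized by the index-dependent translations $\eta_{(-1)^{|i|}(i-m)/2}$, and verify each relation by substituting the shifted variables into the generating-series identities of Lemma \ref{identities in Ymn} and Corollary \ref{corollary: h e bracket} and extracting coefficients (with the odd node $i=m$ handled via the vanishing of $(u-v)[e_m(u),e_m(v)]$ for $p\neq 2$). Your explicit unitriangularity argument for the completeness of the relations simply fills in what the paper leaves implicit in the phrase ``just a rephrasing.''
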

\begin{proof}
The proof is just a rephrasing of Theorem \ref{Theorem: presentaiton of SYmn} for these generators by taking some special coefficients in the power series.
For example, the relation (\ref{D-SYmn-2}) follows from (\ref{identities in Ymn-5}) in conjunction with the identities
$$\frac{\kappa_i(u)-\kappa_i(v)}{v-u}=\sum\limits_{r,s\geq 0}\kappa_{i,r+s-1}u^{-r-1}v^{-s-1};$$
To check (\ref{D-SYmn-3}) in the case $i=j+1$ and the sign is $+$:
we set $u':=u-\frac{(-1)^{|i+1|}}{2}(m-i)$ and $v':=v-\frac{(-1)^{|i|}}{2}(m-i)$ in (\ref{coro3}).
Then taking the $v'^{-s-1}$-coefficient yields
$$[\kappa_{i+1,0},\xi^+_{i,s}]=-1=(-1)^{|i+1|}a_{i+1,i}\xi^+_{i,s};$$
The relations (\ref{D-SYmn-4})-(\ref{D-SYmn-5}) follow from (\ref{coro 1})-(\ref{coro3}) and (\ref{D-SYmn-6})
follows from (\ref{identities in Ymn-6})-(\ref{identities in Ymn-9}).
As $p\neq 2$ and both $e_m$ and $f_m$ are odd, the relations (\ref{identities in Ymn-6})-(\ref{identities in Ymn-7})
yield $$(u-v)[e_m(u), e_m(v)]=0=(u-v)[f_m(u), f_m(v)],$$
so the relation (\ref{D-SYmn-7}) follows;
the relation (\ref{D-SYmn-8}) can be deduced from (\ref{identities in Ymn-2});
Finally, (\ref{D-SYmn-9})-(\ref{D-SYmn-10}) follow from (\ref{identities in Ymn-12})-(\ref{identities in Ymn-15}).
\end{proof}

\begin{Remark}
(1) Note that $p\neq 2$.
We may set $r=s$ in (\ref{D-SYmn-9}) to see that
$$\big[\xi_{i,r}^{\pm}, [\xi_{i,r}^{\pm}, \xi_{j,t}^\pm]\big]=0\text{ if $|i-j|=1$}.$$
For the quartic Serre relations (\ref{D-SYmn-10}),
we just consider the case for $i=m$ (See Remark \ref{rk2}).

(2) The case $i=j$ in (\ref{D-SYmn-6}) and (\ref{D-SYmn-7}) are equivalent to the following relations:
\begin{align*}
[\xi_{i,r}^+,\xi_{i,s}^+]=(-1)^{|i+1|}(\sum_{t=0}^{s-1}\xi_{i,t+r}^+,\xi_{i,s-t-1}^+-\sum_{t=0}^{r-1}\xi_{i,t+s}^+,\xi_{i,r-t-1}^+),
\end{align*}
\begin{align*}
[\xi_{i,r}^-,\xi_{i,s}^-]=(-1)^{|i+1|}(\sum_{t=0}^{r-1}\xi_{i,t+s}^-,\xi_{i,r-t-1}^--\sum_{t=0}^{s-1}\xi_{i,t+r}^-,\xi_{i,s-t-1}^-).
\end{align*}
\end{Remark}

\subsection{\boldmath The $p$-centre of $\SYmn$}
Let
\begin{align}\label{definition of a}
a_i(u)&=\sum_{r \geq 0}a_i^{(r)} u^{-r}:=h_i(u)h_i(u-1)\cdots h_i(u-p+1)\nonumber\\
&=\left\{
\begin{array}{ll}
-b_{i+1}(u)b_i(u)^{-1}&\text{if $i<m$,}\\
-b_{i+1}(u)^{-1}b_i(u)^{-1}&\text{if $i=m$,}\\
b_{i+1}(u)^{-1}b_i(u)^{-1}&\text{if $i>m$,}
\end{array}\right.
\end{align}
where the last equality follows from the definition ((\ref{definition h}) and (\ref{definition b})).
According to Lemma \ref{diag center lemma 1}, each $a_i^{(r)}$ belongs to the center of $\SYmn$.
We define the {\it $p$-center} of $\SYmn$ to be the subalgebra
$Z_p(\SYmn)$ of
$Z(\SYmn)$ generated by
\begin{equation}\label{generators of  p-center of SYmn}
\big\{a_i^{(rp)};~1 \leq i<m+n, r > 0\big\}\cup
\Big\{\big(e_{i,j}^{(r)} \big)^p,
\big(f_{j,i}^{(r)} \big)^p;~1\leq i<j\leq m+n, r>0, |i|+|j|=0\Big\}.
\end{equation}
We also let
$$Z_p(\fg'):=\kk\langle x^p-x^{[p]};~x\in(\fg')_{\bar{0}}\rangle$$
be the $p$-center of $U(\fg')$.

\begin{Theorem}\label{p-center symn theorem 1}
The generators (\ref{generators of  p-center of SYmn}) of $Z_p(\SYmn)$ are algebraically
independent,
and we have that $\gr Z_p(SY_n) = Z_p(\fg')$.
Moreover, $\SYmn$
is free as a module over $Z_p(SY_n)$ with
basis given by the ordered supermonomials in
\begin{equation*}
\big\{h_i^{(r)};~1 \leq i < m+n, r >
0\big\}\cup\big\{e_{i,j}^{(r)},
f_{i,j}^{(r)};~1\leq i < j \leq m+n, r > 0\big\}
\end{equation*}
in which no exponent is $p$ or more for $h_i^{(r)}$ and $e_{i,j}^{(r)},
f_{j,i}^{(r)}$ with $|i|+|j|=0$.
\end{Theorem}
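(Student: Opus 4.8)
The plan is to reuse, almost verbatim, the filtered-to-graded machinery developed for $Z_p(\Ymn)$ in Section \ref{Section:center}, but now transported through the isomorphism $\gr\SYmn\cong U(\fg')$ of Theorem \ref{PBW SYmn}. The off-diagonal generators $(e_{i,j}^{(r)})^p$ and $(f_{j,i}^{(r)})^p$ are already fully controlled by Theorem \ref{theorem of off dig center}, so the only genuinely new input is to pin down the loop-filtration degree and the leading symbol of the diagonal generators $a_i^{(rp)}$.

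First I would compute $\gr a_i^{(rp)}$. By (\ref{definition of a}) the series $a_i(u)=h_i(u)h_i(u-1)\cdots h_i(u-p+1)$ is a cyclic product of the pairwise commuting series $h_i(u)$ (relation (\ref{SYmn presentation relation 1})), and by the remarks preceding (\ref{grading of hi r+1}) we have $h_i^{(r+1)}\in\mathrm{F}_rY_{m|n}$ with $\gr_rh_i^{(r+1)}=e_{i,i}x^r-(-1)^{|i|+|i+1|}e_{i+1,i+1}x^r$. After factoring out the constant term $h_i^{(0)}=-(-1)^{|i|}$ so as to apply \cite[Lemma 2.9]{BT18} exactly as in the $bc^{(rp)}$ computation, one obtains $a_i^{(rp)}\in\mathrm{F}_{rp-p}Y_{m|n}$. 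Setting $\bar h_i:=e_{i,i}-(-1)^{|i|+|i+1|}e_{i+1,i+1}\in(\mathfrak{sl}_{m|n})_{\bar0}$, one checks directly that $\bar h_i^{[p]}=\bar h_i$ (its diagonal entries are $\pm1$ and $p$ is odd; note at $i=m$ one has $\bar h_m=e_{m,m}+e_{m+1,m+1}$), hence $(\bar h_ix^{r-1})^{[p]}=\bar h_ix^{rp-p}$, and the leading term collapses to
\begin{equation*}
\gr_{rp-p}a_i^{(rp)}=(\bar h_ix^{r-1})^p-(\bar h_ix^{r-1})^{[p]}.
\end{equation*}
Thus $a_i^{(rp)}$ lifts precisely the $p$-central generator of $U(\fg')$ attached to the even element $\bar h_ix^{r-1}$; alternatively this can be read off from (\ref{definition of a}) together with (\ref{grading of brp}).

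Combining this with $\gr_{rp-p}(e_{i,j}^{(r)})^p=(-1)^{|i|}(e_{i,j}x^{r-1})^p$ and $\gr_{rp-p}(f_{j,i}^{(r)})^p=(-1)^{|j|}(e_{j,i}x^{r-1})^p$ from Theorem \ref{theorem of off dig center}, I would then observe that the leading symbols of the generators (\ref{generators of  p-center of SYmn}) are, up to nonzero scalars, exactly the free polynomial generators of $Z_p(\fg')$. Indeed $\{\bar h_ix^r;\,1\leq i<m+n,\,r\geq0\}\cup\{e_{i,j}x^r;\,i\neq j,\,|i|+|j|=0,\,r\geq0\}$ is a basis of $(\fg')_{\bar0}$; the off-diagonal $e_{i,j}$ are $[p]$-nilpotent, contributing the pure $p$-powers $(e_{i,j}x^r)^p$, while the diagonal basis vectors contribute $(\bar h_ix^r)^p-(\bar h_ix^r)^{[p]}$, and by the $\mathfrak{sl}$-analogue of (\ref{p-center is free polynomial}) these generate $Z_p(\fg')$ freely. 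Since $\gr\SYmn=U(\fg')$, algebraic independence of the leading symbols forces algebraic independence of the generators (\ref{generators of  p-center of SYmn}) and yields $\gr Z_p(\SYmn)=Z_p(\fg')$.

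For the final freeness statement I would argue exactly as in Corollaries \ref{main coro 1} and \ref{main coro 2}, passing to the associated graded algebra. It suffices to show that the ordered supermonomials in the generators (\ref{beany}) with no exponent $\geq p$ on the even generators, multiplied by ordered monomials in the central generators (\ref{generators of  p-center of SYmn}), form a $\kk$-basis of $\SYmn$. Using $\gr h_i^{(r)}=\bar h_ix^{r-1}$, $\gr e_{i,j}^{(r)}=(-1)^{|i|}e_{i,j}x^{r-1}$ and the leading symbols just computed, this family maps to products of the $p$-central generators of $U(\fg')$ with the restricted PBW monomials of $U(\fg')$; the latter form a basis because $U(\fg')$ is free over $Z_p(\fg')$, the transition between the two monomial bases being uni-triangular with respect to the loop degree, and a standard lifting then returns the statement for $\SYmn$. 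The main obstacle is the sign-and-normalization bookkeeping in the first step: verifying $\bar h_i^{[p]}=\bar h_i$ in each of the cases $i<m$, $i=m$, $i>m$, and correctly accounting for the constant term $h_i^{(0)}=-(-1)^{|i|}$ (whose odd $p$-th power reintroduces the same scalar) when invoking \cite[Lemma 2.9]{BT18}; everything downstream is a faithful transcription of the $\Ymn$ arguments.
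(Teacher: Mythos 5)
Your proposal is correct and follows essentially the same route as the paper: both compute $\gr_{rp-p}a_i^{(rp)}$ via \cite[Lemma 2.9]{BT18} after normalizing away the constant term $h_i^{(0)}=-(-1)^{|i|}$, identify the leading symbols (together with those of $(e_{i,j}^{(r)})^p$, $(f_{j,i}^{(r)})^p$ from Theorem \ref{theorem of off dig center}) with the free generators of $Z_p(\fg')$ attached to a basis of $(\fg')_{\bar 0}$, and deduce freeness by the associated-graded argument of Corollary \ref{main coro 1} using the PBW basis of Theorem \ref{PBW SYmn}. The only cosmetic difference is that you phrase the diagonal leading term intrinsically as $(\bar h_i x^{r-1})^p-(\bar h_i x^{r-1})^{[p]}$ via the observation $\bar h_i^{[p]}=\bar h_i$, whereas the paper writes it out explicitly; this is the same computation.
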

\begin{proof}
We put
$$\widetilde a_i(u):=-(-1)^{|i|}a_i(u)=\sum_{r \geq 0}\widetilde a_i^{(r)}u^{-r}$$ and $$\widetilde h_i(u):=-(-1)^{|i|}h_i(u)=\sum_{r \geq 0}\widetilde h_i^{(r)}u^{-r}.$$
By (\ref{definition of a}), we have
$$\widetilde a_i(u)=\prod_{j=1}^{p}\left(\sum_{r\geq 0}\widetilde h_i^{(r)}(u -j+1)^{-r}\right).$$
Note that $\widetilde h_i^{(0)}=1$ and the set $\{\widetilde h_i^{(r)};~r>0\}$ are commuting elements by (\ref{di dj commu}).
Then \cite[Lemma 2.9]{BT18} implies that
$\widetilde a_i^{(rp)}\in{\rm F}_{rp-p}Y_{m|n}$
and
$$
\widetilde a_i^{(rp)}\equiv(\widetilde h_i^{(r)})^p-\widetilde h_i^{(rp-p+1)}~({\rm mod}~{\rm F}_{rp-p-1}Y_{m|n}).
$$
Now (\ref{grading of hi r+1}) readily yields
\begin{equation*}
\gr_{rp-p}a_i^{(rp)}=\big(e_{i,i}x^{r-1}-(-1)^{|i|+|i+1|}e_{i+1,i+1}x^{r-1}\big)^p-\big(e_{i,i}x^{rp-p}-(-1)^{|i|+|i+1|}e_{i+1,i+1}x^{rp-p}\big).
\end{equation*}
In conjunction with (\ref{center 1-111}) this implies that the generators (\ref{generators of  p-center of SYmn}) of $Z_p(\SYmn)$
are lifts of generators for $Z_p(\fg')$ coming from a basis for $\fg'$.
This establishes the algebraic independence and that $\gr Z_p(\SYmn)=Z_p(\fg')$.
The final assertion follows by similar argument to the proof of Corollary \ref{main coro 1}
using the PBW basis for $\SYmn$ from Theorem \ref{PBW SYmn}.
\end{proof}

\begin{Proposition}\label{Proposition ZpSYmm=ZYmm}
If $p\nmid (m-n)$, then $Z_p(\SYmn)=Z(\SYmn)$.
\end{Proposition}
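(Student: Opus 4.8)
The plan is to pass to associated graded algebras for the loop filtration and reduce everything to the center of $U(\fg')$, where $\fg'=\fsl_{m|n}[x]$. The inclusion $Z_p(\SYmn)\subseteq Z(\SYmn)$ holds by construction, and both are filtered subalgebras of $\SYmn$ with $\gr\SYmn=U(\fg')$ by Theorem~\ref{PBW SYmn}. Taking leading terms gives $\gr Z(\SYmn)\subseteq Z(U(\fg'))$: if $z\in Z(\SYmn)$ has filtration degree $d$, then $[z,y]=0$ for every homogeneous $y\in\SYmn$ forces $[\gr_d z,\gr y]=0$ in $\gr\SYmn$, so $\gr_d z$ is central in $U(\fg')$. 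Since Theorem~\ref{p-center symn theorem 1} already gives $\gr Z_p(\SYmn)=Z_p(\fg')$, it suffices to prove
\begin{equation*}
Z(U(\fg'))=Z_p(\fg')\qquad\text{whenever }p\nmid(m-n).
\end{equation*}
Granting this, the chain $\gr Z_p(\SYmn)=Z_p(\fg')=Z(U(\fg'))\supseteq\gr Z(\SYmn)\supseteq\gr Z_p(\SYmn)$ collapses to $\gr Z(\SYmn)=\gr Z_p(\SYmn)$; as the loop filtration is exhaustive and bounded below, an inclusion of filtered algebras inducing an isomorphism on associated graded objects is an equality, and $Z(\SYmn)=Z_p(\SYmn)$ follows.

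Thus the crux is the displayed identity, and this is exactly where $p\nmid(m-n)$ is used. Since $\mathrm{str}(I)=m-n$ is then invertible in $\kk$, we have $\gl_{m|n}=\fsl_{m|n}\oplus\kk I$, giving a decomposition $\fg=\fg'\oplus\fz$ into restricted ideals with $\fz=\fz(\fg)=\kk\langle z_r;r\geq 0\rangle$ central and abelian; here $\fg'$ is a restricted subalgebra because $\mathrm{str}(A^{[p]})=\mathrm{str}(A)^p$ in characteristic $p$. Hence $U(\fg)\cong U(\fg')\otimes\kk[z_r;r\geq 0]$, and since the second factor is central, the standard argument (expanding a central element in a basis of the central factor) yields $Z(U(\fg))=Z(U(\fg'))\otimes\kk[z_r;r\geq 0]$.

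It remains to compare this with Theorem~\ref{center of enveloping algebra of shifted current algebra}. Working in the adapted basis of $(\fg)_{\bar 0}$ formed by the even off-diagonal $e_{i,j}x^r$, the coroots $h_ix^r$ with $h_i:=e_{i,i}-(-1)^{|i|+|i+1|}e_{i+1,i+1}$, and the elements $Ix^r=z_r$, one checks that $Z_p(\fg)=Z_p(\fg')\otimes\kk[z_r^p-z_{rp};r\geq 0]$, where $Z_p(\fg')$ is freely generated by $\{(e_{i,j}x^r)^p\}_{i\neq j}$ together with $\{(h_ix^r)^p-(h_ix^r)^{[p]}\}$ and lies in $U(\fg')$ (using that $e_{i,j}^{[p]}=0$ for $i\neq j$ and that $h_i^{[p]}$ is again supertraceless). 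Substituting into $Z(U(\fg))=\kk[z_r]\cdot Z_p(\fg)$ and absorbing $z_r^p-z_{rp}\in\kk[z_r]$ collapses the product to $Z(U(\fg))=\kk[z_r]\otimes Z_p(\fg')$. Comparing the two expressions $\kk[z_r]\otimes Z(U(\fg'))$ and $\kk[z_r]\otimes Z_p(\fg')$ for $Z(U(\fg))$ and cancelling the free factor $\kk[z_r]$ (against the evident inclusion $Z_p(\fg')\subseteq Z(U(\fg'))$) gives $Z(U(\fg'))=Z_p(\fg')$. The main obstacle is precisely this bookkeeping—tracking how the free generators of $Z(U(\fg))$ and of $Z_p(\fg)$ redistribute across $\fg=\fg'\oplus\fz$—and it genuinely requires $p\nmid(m-n)$: if $p\mid(m-n)$ then $I\in\fsl_{m|n}$, so $\fz(\fg')\neq 0$ and $Z(U(\fg'))$ contains central elements lifting the $z_r$ that are not in $Z_p(\fg')$.
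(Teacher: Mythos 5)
Your proof is correct and follows essentially the same route as the paper: reduce via the associated graded (using Theorem \ref{p-center symn theorem 1} and $\gr\SYmn=U(\fg')$) to the identity $Z(U(\fg'))=Z_p(\fg')$, and then deduce that identity from the decomposition $\fg=\fg'\oplus\fz(\fg)$ together with Theorem \ref{center of enveloping algebra of shifted current algebra}. You merely spell out in more detail the bookkeeping of generators that the paper leaves implicit.
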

\begin{proof}
Obviously,
$Z_p(\SYmn)\subseteq Z(\SYmn)$.
When combined with Theorem \ref{p-center symn theorem 1}, this implies
$$Z_p(\fg')=\gr Z_p(\SYmn)\subseteq\gr Z(\SYmn)\subseteq Z(\fg').$$
It suffices to verify that $Z_p(\fg')=Z(\fg')$.
Our assumption on $p$ ensures that $\fg=\fg'\oplus\fz(\fg)$.
Hence, $Z(\fg)\cong Z(\fg')\otimes\kk[z_r;~r\geq 0]$.
Moreover,
Theorem \ref{center of enveloping algebra of shifted current algebra} in combination with the assumption $p\nmid (m-n)$ shows that
the elements $\{x^p-x^{[p]};~x\in\fg'\}\cup\{z_r;~r\geq 0\}$ generate $Z(\fg)$.
This implies the assertion.
\end{proof}

\subsection{\boldmath Another description of the $p$-center of $\Ymn$}
Given $|i|+|j|=0$, we consider the even elements $\{t_{i,j}^{(r)};~r\geq 0\}$.
Using the defining relation (\ref{RTT relations}) and induction on $r+s$,
it is easy to see that
\begin{equation*}
t_{i,j}^{(r)}t_{i,j}^{(s)}=t_{i,j}^{(s)}t_{i,j}^{(r)}
\end{equation*}
for all $r,s \geq 0$.
For $|i|+|j|=0$,
we define
\begin{equation}\label{definition s-ij}
s_{i,j}(u) = \sum_{r \geq 0}s_{i,j}^{(r)} u^{-r}
:=\left\{
\begin{array}{ll}
t_{i,j}(u)t_{i,j}(u-1)\cdots t_{i,j}(u-p+1)&\text{if $|i|=0$,}\\
t_{i,j}'(u)t_{i,j}'(u-1)\cdots t_{i,j}'(u-p+1)&\text{if $|i|=1$.}
\end{array}\right.
\end{equation}
The foregoing observations in conjunction with Proposition \ref{proposition auto}(4)
imply that the order of the product on the right hand side here is irrelevant.

\begin{Lemma}\label{Lemma: sij coeff in p-center}
All of the elements $s_{i,j}^{(r)}$ belong to the $p$-center
$Z_p(\Ymn)$.
\end{Lemma}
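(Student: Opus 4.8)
The plan is to prove directly that every coefficient $s_{i,j}^{(r)}$ lies in $Z_p(\Ymn)$; since $Z_p(\Ymn)\subseteq Z(\Ymn)$, centrality is then automatic. First I would cut down the cases. Applying the transposition anti-automorphism $\tau$ together with $\rho_{m|n}$ and $\zeta_{m|n}$ from Proposition \ref{proposition auto} — which interchange the two parity blocks and exchange the roles of $t_{i,j}(u)$ and $t'_{i,j}(u)$ — it suffices to treat the representative case $|i|=|j|=0$ with $i\leq j$, separating the diagonal case $i=j$ from the off-diagonal case $i<j$. In this reduced situation the signs $(-1)^{|i|p}$ are harmless even when $p=2$, since $|i|=0$.

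Next I would pass to the associated graded algebra $\gr\Ymn\cong U(\fg)$ of Lemma \ref{lemma U(g) isomorphic to graded of loop filtration}, under the identification \ref{identification}. In the off-diagonal case the top-degree part of the coefficient of $u^{-s}$ in each factor $t_{i,j}(u-a)$ equals $(-1)^{|i|}e_{i,j}x^{s-1}$, \emph{independently} of the shift $a$; since the elements $e_{i,j}x^{b}$ commute in $U(\fg)$, a Frobenius computation gives
\[
\gr_{s-p}s_{i,j}^{(s)}=\sum_{\substack{a_0+\cdots+a_{p-1}=s-p\\ a_k\geq 0}}\prod_{k=0}^{p-1}(-1)^{|i|}e_{i,j}x^{a_k}=\begin{cases}(-1)^{|i|}\big(e_{i,j}x^{r-1}\big)^p&\text{if }s=rp,\\ 0&\text{if }p\nmid s,\end{cases}
\]
which matches \ref{center 1-111}, so the surviving leading terms are exactly the generators $(e_{i,j}^{(r)})^p$ of $Z_p(\Ymn)$. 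In the diagonal case the coefficients of $t_{i,i}(u)$ commute and $t_{i,i}^{(0)}=1$, so \cite[Lemma 2.9]{BT18} applies and yields $s_{i,i}^{(rp)}\equiv (t_{i,i}^{(r)})^p-t_{i,i}^{(rp-p+1)}$ modulo lower filtration; passing to $\gr$ recovers $(-1)^{|i|}\big((e_{i,i}x^{r-1})^p-e_{i,i}x^{rp-p}\big)$, the leading term of $b_i^{(rp)}$ from \ref{grading of brp}. In all cases $\gr s_{i,j}^{(r)}\in Z_p(\fg)=\gr Z_p(\Ymn)$.

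Finally I would lift from $\gr$ to $\Ymn$ by induction on the loop filtration: when $p\mid r$, say $r=r'p$, subtract from $s_{i,j}^{(r)}$ the generator $(e_{i,j}^{(r')})^p$ (respectively $b_i^{(r)}$) with the same leading term, and when $p\nmid r$ subtract the polynomial in lower generators dictated by \ref{center 1-222}; each remainder has strictly smaller filtered degree, to which the induction hypothesis applies. The step that legitimizes this induction — and the main obstacle — is the \emph{centrality} of the $s_{i,j}^{(r)}$: only then is each remainder again central, so its leading term again lies in $Z(\fg)$, and the uniformly $p$-th-power shape of the surviving leading terms, combined with the fact that the Harish--Chandra generators $c^{(r)}$ have leading term $z_{r-1}$ (Proposition \ref{HC center of Ymn}) which is never such a $p$-th power, forces the remainder into the $Z_p(\fg)$-part rather than the $Z_{\HC}$-part. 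I expect centrality to be the hard core: it is a genuinely characteristic-$p$ phenomenon, to be proved from the defining relation \ref{tiju tkl relation} by commuting $t_{k,l}(v)$ through the whole $p$-string $t_{i,j}(u)t_{i,j}(u-1)\cdots t_{i,j}(u-p+1)$ and invoking the identity $\prod_{a=0}^{p-1}(u-v-a)=(u-v)^p-(u-v)$ in $\FF_p[u,v]$ to force the accumulated correction terms to telescope to zero; the super-signs and the use of $t'_{i,j}(u)$ when $|i|=1$ will require the usual extra care.
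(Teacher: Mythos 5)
There are two genuine gaps, and they sit exactly at the two load-bearing points of the lemma. First, the centrality of the $s_{i,j}^{(r)}$ is never actually proved: you correctly identify it as ``the hard core'' and then only describe a hoped-for computation, namely pushing $t_{k,l}(v)$ through the $p$-fold product and invoking $\prod_{a=0}^{p-1}(u-v-a)=(u-v)^p-(u-v)$ so that the corrections ``telescope to zero.'' That identity controls the denominators but not the numerators: each application of (\ref{tiju tkl relation}) replaces $t_{i,j}$ and $t_{k,l}$ by series with \emph{different} indices, so the terms produced at successive stages do not line up into a telescoping sum in any evident way, and no cancellation mechanism is exhibited. The paper avoids this entirely: it reduces by the permutation and swap automorphisms to $s_{1,1}(u)$ and $s_{1,2}(u)$, and proves the closed-form identities $s_{1,1}(u)=b_1(u)$ and $s_{1,2}(u)=b_1(u)p_{1,2}(u)$ using $t_{1,1}(u)=d_1(u)$, $t_{1,2}(u)=d_1(u)e_1(u)$ and (\ref{d arrow e 1}); centrality is then inherited from Lemmas \ref{root power series central} and \ref{diag center lemma 1}, which were already established from the Drinfeld presentation. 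If you want a direct RTT proof you must actually produce the cancellation; as written this step is a conjecture, not an argument.

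Second, even granting centrality, your descent from ``central with top graded symbol in $Z_p(\fg)$'' to ``lies in $Z_p(\Ymn)$'' does not work. The element $(e_{1,2}^{(2)})^p+c^{(1)}$ is central, its top symbol $\gr_p$ is $(e_{1,2}x)^p\in Z_p(\fg)$, yet it does not lie in $Z_p(\Ymn)$ because $c^{(1)}$ does not. Your induction subtracts the $p$-center generator with matching leading term and asserts that the remainder's symbol again lies in $Z_p(\fg)$ ``rather than the $Z_{\HC}$-part,'' but the remainder is a central element of strictly lower filtration degree about which you know nothing: a Harish--Chandra contribution $z_s$ could perfectly well appear in a lower graded piece of $s_{i,j}^{(r)}$, and ruling this out would require computing all the subleading terms, not just the top one. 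The paper's proof is built precisely to dodge this: it uses $Z_p(\Ymn)=\Ymn\cap Z_p(Y_{m|n+1})$ to reduce to the case $p\nmid(m-n)$, observes that $s_{i,j}(u)s_{1,1}(u)^{-1}$ has coefficients in $\SYmn$, and then applies Proposition \ref{Proposition ZpSYmm=ZYmm}, which says $Z(\SYmn)=Z_p(\SYmn)$ under that hypothesis, so that centrality alone suffices there. Your leading-term computation in $\gr\Ymn$ is correct and matches (\ref{this}), but it proves the graded statement of Theorem \ref{final theorem}, not this lemma.
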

\begin{proof}
First we show that each $s_{i,j}^{(r)}$ belongs to $Z(\Ymn)$.
The general assumption $|i|+|j|=0$ then implies that we need to consider the two cases:
(a) $1\leq i,j\leq m$; (b) $m+1\leq i,j\leq m+n$.
For (a), one uses the permutation automorphism from Section \ref{Section Automorphisms}(3) to reduce the problem to showing that
$$(\dag)\ \ \ \ \text{all coefficients of $s_{1,1}(u)$ and of $s_{1,2}(u)$ are central}.$$
Using the swap map (Proposition \ref{proposition auto}) one can show by direct computation that
$$\zeta_{m|n}(t'_{i,j}(u))=t_{m+n+1-i,m+m+1-j}(u).$$
If $m+1\leq i,j\leq m+n$, then $\zeta_{m|n}(s_{i,j}(u))=s_{m+n+1-i,m+m+1-j}(u)\in Y_{n|m}$.
We can also use the permutation automorphism to reduce case (b) to the problem of showing $(\dag)$.
Since the coefficients of $b_i(u)$ and $p_{i,j}$ are contained in the $p$-center of $\Ymn$,
the $(\dag)$ follows because the following claim.
\begin{align}
s_{1,1}(u) &= b_1(u),\label{claim 1}\\
s_{1,2}(u) &= b_1(u) p_{1,2}(u).\label{claim 2}
\end{align}
The definition of $b_i(u)$ (\ref{definition b}) in combination with (\ref{quasideterminants D}) gives $t_{1,1}(u)=d_1(u)$,
so that the first identity (\ref{claim 1}) follows.
For (\ref{claim 2}), we set $i=1$ and $v=u-k$ in (\ref{d arrow e 1}) to deduce that
\begin{equation*}
e_1(u-k)d_1(u-k+1)\cdots d_1(u-1)d_1(u)=d_1(u-k+1)\cdots d_1(u-1)d_1(u)e_1(u)
\end{equation*}
for each $k=1,\dots,p-1$, while the Gauss decomposition (\ref{gauss decomp}) yields $t_{1,2}(u)=d_1(u)e_1(u)$.
Consequently,
\begin{align*}
s_{1,2}(u) &=t_{1,2}(u-p+1)\cdots t_{1,2}(u-1)t_{1,2}(u)\\
&=
d_1(u-p+1)e_1(u-p+1) \cdots d_1(u-1) e_1(u-1)d_1(u) e_1(u)
\\&=
d_1(u-p+1) \cdots d_1(u-1) d_1(u) e_1(u)^p=b_1(u) p_{1,2}(u).
\end{align*}
This establishes (\ref{claim 2}).

By Theorem \ref{main theorem: center of Ymn}(2), we have that
$Z_p(\Ymn)=Y_{m|n}\cap Z_p(Y_{m|n+1})$,
where we are using the natural embedding $Y_{m|n}\hookrightarrow Y_{m|n+1};~t_{i,j}^{(r)} \mapsto t_{i,j}^{(r)}$.
In order to prove that $s_{i,j}^{(r)} \in Z_p(\Ymn)$,
we may thus assume that that $p \nmid (m-n)$.
Equivalently, we show that $s_{i,j}(u) \in Z_p(\Ymn)[[u^{-1}]]$.
This is immediate by (\ref{claim 1}) in case $i=j=1$.
In general, we will show that $s_{i,j}(u)s_{1,1}(u)^{-1}\in Z_p(\Ymn)[[u^{-1}]]$.
Using the definition (\ref{definition of SYmn}),
we get that $s_{i,j}(u)s_{1,1}(u)^{-1} \in\SYmn[[u^{-1}]]$.
Since we have shown its
coefficients are central already,
it therefore lies in $Z_p(\SYmn)[[u^{-1}]]$,
which by Proposition \ref{Proposition ZpSYmm=ZYmm}.
However, the definition of $Z_p(\SYmn)$ immediately implies
$Z_p(\SYmn)[[u^{-1}]]\subset
Z_p(\Ymn)[[u^{-1}]]$,
as desired.
\end{proof}

\begin{Theorem}\label{final theorem}
The $p$-center $Z_p(\Ymn)$ is freely generated by
$\{s_{i,j}^{(rp)};~1 \leq i,j \leq m+n, r > 0,|i|+|j|=0\}$.
We have that $s_{i,j}^{(rp)} \in{\rm F}_{rp-p}\Ymn$ and
\begin{equation}\label{this}
\gr_{rp-p}s_{i,j}^{(rp)}=(e_{i,j} t^{r-1})^p
- \delta_{i,j} e_{i,j} t^{rp-p}.
\end{equation}
For $0 < r< p$, we have that $s_{i,j}^{(r)} = 0$.
For $r \geq p$ with $p \nmid r$, the central element $s_{i,j}^{(r)}$
belongs to ${\rm F}_{rp-p-1} Y_n$ and it may be expressed as a polynomial
in the elements $\{s_{i,j}^{(ps)};~0 < s \leq \lfloor r/p\rfloor\}$.
\end{Theorem}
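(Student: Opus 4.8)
The plan is to prove the three assertions of the theorem in the order: the grading formula \eqref{this} together with the vanishing $s_{i,j}^{(r)}=0$ for $0<r<p$; then free generation; and finally the polynomial reduction for $p\nmid r$. Centrality of each $s_{i,j}^{(r)}$ is already supplied by Lemma~\ref{Lemma: sij coeff in p-center}, so it is not at issue. I will treat the diagonal pairs $i=j$ and the off-diagonal pairs $i\neq j$ (always with $|i|+|j|=0$) separately for the first two points. Throughout I write $\bar{t}^{(r)}_{i,j}:=\gr_{r-1}t_{i,j}^{(r)}=(-1)^{|i|}e_{i,j}x^{r-1}$ for the leading symbol under Lemma~\ref{lemma U(g) isomorphic to graded of loop filtration}, and I use freely that, for a fixed even pair $(i,j)$, the coefficients of $t_{i,j}(u)$ (and of $t'_{i,j}(u)$) pairwise commute and that the symbols $e_{i,j}x^{s}$ span a commutative subalgebra of $U(\fg)$.

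For a diagonal pair the series $t_{i,i}(u)$ (or $t'_{i,i}(u)$ when $|i|=1$) has constant term $1$ and commuting coefficients, so \cite[Lemma~2.9]{BT18} applies directly and yields $s_{i,i}^{(rp)}\in\mathrm{F}_{rp-p}\Ymn$ with $s_{i,i}^{(rp)}\equiv(t_{i,i}^{(r)})^p-t_{i,i}^{(rp-p+1)}\pmod{\mathrm{F}_{rp-p-1}}$, the vanishing for $0<r<p$, and the reduction for $p\nmid r$; passing to symbols gives $\gr_{rp-p}s_{i,i}^{(rp)}=(e_{i,i}x^{r-1})^p-e_{i,i}x^{rp-p}$, which is \eqref{this} with $i=j$. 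For an off-diagonal pair I would argue by a direct Frobenius computation. Expanding $t_{i,j}(u-k)=\sum_{N\geq1}\big(\sum_{r=1}^{N}\binom{N-1}{N-r}k^{\,N-r}t_{i,j}^{(r)}\big)u^{-N}$, each inner coefficient lies in $\mathrm{F}_{N-1}$ with leading symbol $\bar{t}^{(N)}_{i,j}$ independent of $k$. Hence every monomial contributing to $s_{i,j}^{(M)}$ lies in $\mathrm{F}_{M-p}$, forcing $s_{i,j}^{(M)}=0$ for $M<p$; and by the Frobenius $\big(\sum_{s\geq0}\bar{t}^{(s+1)}_{i,j}u^{-s}\big)^{p}=\sum_{s\geq0}\big(\bar{t}^{(s+1)}_{i,j}\big)^{p}u^{-sp}$ in the commutative algebra generated by the $e_{i,j}x^{s}$, so that $\gr_{M-p}s_{i,j}^{(M)}$ equals the $u^{-(M-p)}$-coefficient of this right-hand side. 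This vanishes unless $p\mid M$, so $s_{i,j}^{(r)}\in\mathrm{F}_{r-p-1}\Ymn$ when $p\nmid r$, while for $M=rp$ it gives $\gr_{rp-p}s_{i,j}^{(rp)}=\big(\bar{t}^{(r)}_{i,j}\big)^{p}=(e_{i,j}x^{r-1})^{p}$ (for $|i|=1$ one runs the same computation with $t'_{i,j}$, whose symbol is $e_{i,j}x^{r-1}$, landing on the same value). This establishes \eqref{this} and the vanishing in all cases.

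Free generation then follows by the standard filtered argument. By Theorem~\ref{main theorem: center of Ymn}(2) together with \eqref{identification} and \eqref{grading of brp}, one has $\gr Z_p(\Ymn)=Z_p(\fg)$, which by \eqref{p-center is free polynomial} is the free polynomial algebra on the symbols $\{(e_{i,j}x^{r'})^{p}-\delta_{i,j}e_{i,j}x^{r'p}\}$ indexed by even pairs $(i,j)$ and $r'\geq0$. The leading symbols $\gr_{rp-p}s_{i,j}^{(rp)}$ computed above are exactly these free generators under $r'=r-1$, and the indexing is a bijection. Since the leading symbols of $\{s_{i,j}^{(rp)}\}$ freely generate the polynomial ring $\gr Z_p(\Ymn)$, a routine induction on filtered degree shows that the $s_{i,j}^{(rp)}$ are themselves algebraically independent and generate $Z_p(\Ymn)$; comparing with the generating set of Theorem~\ref{main theorem: center of Ymn}(2) confirms the two sets have equal cardinality, as they must.

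The step I expect to be the real obstacle is the final clause: for $p\nmid r$ the off-diagonal element $s_{i,j}^{(r)}$ must be a polynomial in the \emph{same-index} elements $\{s_{i,j}^{(ps)};~0<s\leq\lfloor r/p\rfloor\}$, whereas the computation above only locates it in $\mathrm{F}_{r-p-1}\Ymn$. Because this is a relation among the coefficients of the single series $t_{i,j}(u)$ under the shifted product $s_{i,j}(u)=\prod_{k=0}^{p-1}t_{i,j}(u-k)$, it is a purely commutative identity in the polynomial ring $\kk[t_{i,j}^{(r)};~r>0]$ and is independent of the ambient Yangian. I would prove it by the same mechanism underlying \cite[Lemma~2.9]{BT18} and \cite[Theorem~5.4]{BT18}, namely the characteristic-$p$ congruences $\sum_{k=0}^{p-1}k^{\,e}\equiv0$ for $(p-1)\nmid e$ and $\equiv-1$ for $(p-1)\mid e>0$, which collapse all shift-dependence onto coefficients whose index is divisible by $p$; the $i=j$ instance is precisely \cite[Lemma~2.9]{BT18}. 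Alternatively, since \eqref{claim 1} and \eqref{claim 2} express the base pairs $(1,1)$ and $(1,2)$ through $b_1$ and $p_{1,2}$, whose reductions are known from Theorems~\ref{theorem dig center} and~\ref{theorem of off dig center}, one can transport the relation to every even off-diagonal pair via the permutation automorphisms of Section~\ref{Section Automorphisms}(3) and the swap map $\zeta_{m|n}$, all of which respect the loop filtration and carry the series $s_{i,j}(u)$ to one another.
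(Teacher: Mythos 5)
Your proposal is correct and follows essentially the route the paper intends: the paper's own proof computes the symbols $\gr_r t_{i,j}'^{(r+1)}=-(-1)^{|i|}e_{i,j}x^r$, invokes Lemma~\ref{Lemma: sij coeff in p-center}, and defers the rest to the associated-graded argument of \cite[Theorem 6.9]{BT18}, which is exactly the symbol/Frobenius computation, the matching of leading terms against the free generators of $Z_p(\fg)$ from (\ref{p-center is free polynomial}), and the commutative shift-product identity that you spell out. One caution on the last clause: your primary strategy there is the right one (and becomes a one-line argument once you note that $t_{i,j}(u-p)=t_{i,j}(u)$ in characteristic $p$, so $s_{i,j}(u)$ is invariant under $u\mapsto u-1$ and hence a power series in $(u^p-u)^{-1}$ with coefficients unitriangularly related to the $s_{i,j}^{(sp)}$), whereas the ``alternative'' via (\ref{claim 1})--(\ref{claim 2}) and automorphisms would only express $s_{1,2}^{(r)}$ as a polynomial in the $b_1^{(sp)}$ and $p_{1,2}^{(sp)}$ rather than in the same-index elements $s_{1,2}^{(sp)}$, so it does not by itself deliver the stated conclusion.
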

\begin{proof}
Let $t'_{i,j}(u):=\sum_{r \geq 0}t_{i,j}'^{(r)}u^{-r}$.
By multiplying out the matrix products $T(u)=F(u)D(u)$ and $T(u)^{-1}=E(u)^{-1}D(u)^{-1}F(u)^{-1}$,
one obtain that $t_{i,j}'^{(r+1)}\in{\rm F}_r\Ymn$ and $\gr_r t_{i,j}'^{(r+1)}=-(-1)^{|i|}e_{i,j}x^r$.
By using Lemma \ref{Lemma: sij coeff in p-center} and passing to the associated graded algebra,
the rest of the proof is the same as in the non-super case \cite[Theorem 6.9]{BT18}, and will
be skipped.
\end{proof}

\bigskip
\noindent
\textbf{Acknowledgment.} We would like to thank Lewis Topley for giving a nice talk about
the modular finite $W$-algebras and shifted Yangians.
We also thank Yung-Ning Peng for helpful discussions.
This work is supported by the National Natural Science Foundation of China (Grant Nos.11801204, 11801394).

\end{document}